\numberwithin{equation}{section}
\newcommand{\R}{\mathbb R}
\def\E{\mathbb E}
\def\P{\mathbb P}
\def\XXint#1#2#3{{\setbox0=\hbox{$#1{#2#3}{\int}$}
\vcenter{\hbox{$#2#3$}}\kern-.5\wd0}}
\numberwithin{equation}{section}
\newtheorem{thm}{Theorem}[section]
\newtheorem{lem}[thm]{Lemma}
\newtheorem{prop}[thm]{Proposition}
\theoremstyle{definition}
\def\smallnegint{\mathop{\int\mkern-13mu
        \raise.5ex\hbox{${\scriptscriptstyle\diagup}$}}\nolimits}
\def\ds{\displaystyle}
\def\ep{\varepsilon}
\def\la{\lambda}
\def\oo{\infty}
\def\ol{\overline}
\def\ssetminus{\,\raise.4ex\hbox{$\scriptstyle\setminus$}\,}
\newcommand{\be}{\begin{equation}}
\newcommand{\ee}{\end{equation}}
\newcommand{\bc}{\begin{cases}}
\newcommand{\ec}{\end{cases}}
\newcommand{\bs}{\begin{split}}
\newcommand{\es}{\end{split}}
\newcommand{\pt}{\partial}
\newcommand{\vs}{\vskip.075in}
\renewcommand{\bar}{\overline}
\renewcommand{\hat}{\widehat}
\def\rt{\R\times(0,1)}
\begin{document}

\thanks{\hskip-0.149in The second author  was partially  supported by the National Science Foundation grants DMS-1900599 and DMS-2153822, the Office for Naval Research grant N000141712095 and the Air Force Office for Scientific Research grant FA9550-18-1-0494.}

\title
%[A convergence result for a planning problem for mean field games and a rigorous proof of a Freidlin-Ventchel -type Large Deviations Principle  for the $1+1$ KPZ equation]
{A convergence result for a local  planning problem for mean field games and 
rigorous proof of a Freidlin-Ventchel-type Large Deviations Principle  for the $1+1$ KPZ equation}

\author[Pierre-Louis Lions  and Panagiotis E. Souganidis]{Pierre-Louis Lions and Panagiotis E. Souganidis}
%\author[Pierre-Louis Lions and Panagiotis E. Souganidis]
%%%{Pierre Cardaliaguet and Panagiotis E. Souganidis}
\address{College de France, 11 Pl. Marcelin Berthelot, 75231 Paris, France}
%%\email{cardaliaguet@ceremade.dauphine.fr }
\address{Department of Mathematics, University of Chicago, Chicago, Illinois 60637, USA}
%%\email{souganidis@math.uchicago.edu}
\vskip-0.5in 
%\thanks{\hskip-0.149in The second author  was partially  supported by the National Science Foundation grants DMS-1900599 and DMS-2153822, the Office for Naval Research grant N000141712095 and the Air Force Office for Scientific Research grant FA9550-18-1-0494.}

\dedicatory{Version: \today}

%\tableofcontents

\begin{abstract} We prove the convergence of a viscous approximation to an one dimensional local mean field type planning problem with singular initial and terminal measures. Then we use this result to give a rigorous proof to a Freidlin-Ventchel-type Large Deviations Principle  for the height of the $1+1$ KPZ equation.}
\maketitle

\section{introduction}

Motivated by the theory of large deviations for large heights and fixed times  related to the $1+1$~KPZ equation, here we consider, for $\ep, \eta>0$,  the parabolic mean field game system 
\be\label{takis1}
\bc
\pt_t u_{\ep, \eta} -{\ep} \pt^2_{x}u_{\ep, \eta} + \dfrac{1}{2}(\partial_x u_{\ep, \eta})^2 = \rho_{\ep, \eta}  \; \; \text{in} \; \; \R\times (0,1),\\[2mm]
\pt_t \rho_{\ep, \eta} + \ep \pt^2_{x} \rho_{\ep, \eta}+\pt_x(\pt_x u_{\ep, \eta} \rho_{\ep, \eta})=0 \;\; \text{in} \;\; \R\times (0,1),\\[2mm]
u_{\ep, \eta}(x,0)=|x|^2/2\eta  \  \  \text{and} \ \  \rho_{\ep, \eta}(\cdot,1)= \text{\boldmath $ \delta$},
\ec
\ee
where $\text{\boldmath $ \delta$}$ is the Dirac mass at $x=0$, 
and show that, as $\ep,\eta\to 0$ with $\ep\leq C\eta^\alpha$ for some $\alpha \in (0,1)$ which can be arbitrarily small,  $u_{\ep, \eta} \to \overline u$ locally uniformly  in $\R\times (0,1]$ and $ \rho_{\ep, \eta} \to \overline \rho$ in $L^2(\R\times [0,1])$ where $(\overline u, \overline \rho)$ is the unique  solution of the planning problem
\be\label{takis2}
\bc
\pt_t \overline u + \dfrac{1}{2}(\partial_x \overline u)^2 = \overline \rho \; \; \text{in} \; \; \R\times (0,1), \\[2mm]
\pt_t  \overline \rho+ \pt_x(\partial_x \overline u  \overline \rho)=0 \;\; \text{in} \;\; \R\times (0,1),\\[2mm]
 \overline \rho (\cdot,0)=\text{\boldmath $ \delta$}, \ \ \ \overline\rho(\cdot,1)= \text{\boldmath $ \delta$} \ \ \text{and} \ \  \overline u(0,0)=0,
 \ec
\ee
In addition, the variational problem 
\be\label{takis500}
\begin{split}
I(\ep, \eta)= \inf \Big \{ \int_\R \rho(x,0) \dfrac{x^2}{2\eta} dx +& \int_0^1\int_\R \dfrac{1}{2} \big[\rho^2 +\dfrac{\beta^2 }{\rho} \big] dx dt: \\[1.2mm]
& \pt_t \rho  +\ep \pt^2_x \rho + \pt_x \beta=0 \in \rt \  \rho(\cdot,1)= \text{\boldmath $\delta$} \Big \},
\end{split}
\ee
which, as we will see below, has $\rho_{\ep,\eta}$ and $\beta= \sqrt{\rho_{\ep,\eta}} \pt_x u_{\ep, \eta}$ as unique minimizer, converges, as $\ep,\eta\to 0$ with $\ep\leq C\eta^\alpha$ for every $\alpha >0$, to 
\be\label{takis501}
\overline I=\inf\Big\{\ds\int_0^1 \ds\int_\R \dfrac{1}{2}\big[\dfrac{\beta^2}{\rho} + \rho^2\big] dx dt:\; \pt_t \rho + \pt_x \beta=0 \in \rt \  \rho(\cdot,0)= \text{\boldmath $ \delta$} \ \ \rho(\cdot,1)= \text{\boldmath $\delta$} \Big\},
\ee
whose unique minimizer is, at least formally,  $(\ol \rho, \sqrt{ \ol \rho} \pt_x \ol u)$. 
%where $\delta$ is the usual Dirac mass at $x=0$. 
\vs

The system \eqref{takis1} represents an one-dimensional planning mean field game  problem in which the player wants to move a Dirac mass at $1$ to a Dirac mass at $0$. The facts that these two  measures are singular and the Dirac mass at $t=0$ is created by the ``singular'' limit, $\eta \to 0$, in \eqref{takis1} make the passage to the limit difficult and delicate.   In addition, the convergence of the $\rho_{\ep,\eta}$ requires special attention, since the estimates on $\rho_{\ep,\eta}$ and $u_{\ep,\eta}$ only yield convergence in $L^2_{\text{loc}}(\rt)$ which is not enough to prove the convergence of the $I(\ep, \eta)$'s. The global $L^2-$ convergence is established  directly, using the mean field structure of \eqref{takis1} and \eqref{takis2}, by estimating directly  the $\|\rho_{\ep,\eta}-\ol \rho\|_{L^2(\rt)}$,  which requires a detailed understanding of the limiting behavior of 
some new terms.  Note that we are not able to prove directly that the $I(\ep,\eta)$'s converge to $\ol I$ despite the strong convexity of the functionals involved. Indeed, the singularities due the Dirac masses seem to prevent such a direct approach.
\vs

We remark  here that \eqref{takis1} cannot be solved if we start with $u_{\ep, \eta}(\cdot,0)=\text{\boldmath $ \delta$}$, which in turn will require $\rho_{\ep,\eta}(\cdot,0)=\text{\boldmath $ \delta$}$. Indeed, having the former, will require having a Brownian bridge with a square integrable drift, which is known not to exist. Hence, it is necessary to introduce the penalization $x^2/2\eta$. 
\vs

The $1+1$ KPZ equation %problem closely related  the large deviation principle for the  KPZ-equation
\be\label{takis0}
\pt_t h -{\kappa}\partial_{x}^2 h + l (\partial_x h)^2 =\sqrt{D} \xi \ \ \text{in} \ \ \R\times (0,T),
\ee
where $\kappa$ is the diffusivity, %$h=h(x,t):\R\to [0,\infty) \to \R$ is the interface height, 
$\xi=\xi(x,t)$ is a Gaussian spacetime white noise with $\langle \xi(x,t) \rangle =0$ and $\langle \xi(x_1,t_1)\xi (x_2,t_2) \rangle =\delta (x_1-x_2)\delta(t_1-t_2)$ and $ l>0$ and $D>0$ are material parameters, 
was introduced by Kardar, Parisi, and Zhang in   \cite{KPZ} to  model the dynamics of an
interface arising at the contact between a stable bulk phase with a metastable one. Assuming
that the two bulk phases have no conservation laws and relax exponentially fast, it was argued in \cite{KPZ}
 that the motion of the interface is governed by \eqref{takis1}.
%where $u=u(x, t)$ denotes the height function over the space $x\in\R$ at time $t>0$.
The nonlinearity arises from the asymmetry between the two phases, 
% At the interface a
%transition from metastable to stable is happening fast while the reverse process is strongly suppressed.
the second derivative  reflects the interface tension, and  the space-time white noise $\xi$ models
the randomness in transitions from metastable to stable. %Finally $\nu, \lambda$ and $D$ are material parameters.

\vs

In the last twenty five years there has been  major progress in the mathematical theory of the KPZ equation and the properties of its solution. Listing all papers  is beyond the scope of this paper. Instead, we refer to the works by Ferrari and Spohn \cite{FS}, Quastel \cite{Q},  Corwin \cite{Co}, Quastel and Spohn \cite{QS}, Chandra and Weber \cite{CW}, Corwin and Shen \cite{CoS}, Hairer \cite{ Ha} and  Gubinelli,  Imkeller, and Perkowski \cite{GIP} and the references therein for   the mathematical study
of and issues related to the KPZ equation.
\vs

A topic that has been  the focus  of several works recently  is the understanding of the 
one-point probability distribution $P(H,t)$ of height $H$ at a specified point in space and time. 
Several papers have produced  exact representations of $P(H,t)$  for arbitrary times. This 
progress has been achieved for three classes of initial conditions
(and some of their combinations and variations), namely flat
interface in Calabrese and Le Doussal \cite{CLD}, sharp or narrow wedge in Corwin \cite{Co}, Sasamoto and Spohn \cite{SS}, Calabrese, Le Doussal and Rosso \cite{CLDR}, Dotsenko \cite{Do} and Amir, Corwin and Quastel  \cite{ACQ}, and stationary interface, that is a two-sided Brownian interface pinned at a point Imamura and Samasoto \cite{IS} and Borodin, Corwin, Ferrari and Veto \cite{BCFV} .
\vs

Large deviations of the KPZ equation have been intensively studied in the mathematics and physics communities
in recent years. The results obtained so far give a fairly complete picture in the long time regime $t\to \oo$. 
For the narrow wedge initial data, 
%physics
%literature predicted that the one-point, lower-tail Large Deviation Principle (LDP) rate function should go through a
%crossover from a cubic power to a 5
%2 power [KLD18b]. (The prediction of the 5
%2 power actually first appeared in the
%short time regime; see the discussion about the short time regime below.) 
Corwin and Ghosal \cite{CG20b} and Corwin and Ghosal \cite{CG20a} derived rigorous
bounds on the one-point tail probabilities for the narrow wedge initial data and general initial data. 
The lower-tail rate function was 
derived in the physics literature (see, for example, Sasorov, Meerson and Prolhac \cite{SMP17}, Krajenbrink,  Le Doussal and Prolhac \cite{KLDP18}, Corwin, Ghosal, Krajenbrink, Le Doussal, and Tsai \cite{CGK+18}, Krajenbrink, Le Doussal  and Prolh \cite{KLDP18} and Le Doussal \cite{LD19}) and was rigorously established by Tsai \cite{Tsa18} and  Cafasso and Chayes \cite{CC19}. 
%Although these references used 
%different approaches,  it turns out, as shown in Krajenbrink and Le Doussal \cite{KLD19} , that the methods in %\cite{SMP17, CGK+18, KLDP18, Tsa18}  were closely related. 
The entire
rate function  for the upper tail was derived by  Le Doussal, Majumdar and Schehr  \cite{LDMS16} for 
narrow wedge initial data, while Das and Tsai \cite{DT19} and Ghosal and Lin \cite{GL} gave rigorous proofs. 
\vs

The behavior of the short, that is, fixed,  time large deviations of $P(H,t)$ was derived in the physics literature by 
Kolokolov and Korshunov \cite{KK07,KK09}, Meerson, Katzav and Vilenkin \cite{MKV16} and Kamenev, Meerson and Sasor \cite{KMS16}. These references used   the  weak-noise theory (WNT), which grew from the Martin, Siggia and Rose path integral  formalism in physics \cite{MSR},  and the Freidlin and Wentzel large deviation theory (LDP)  in mathematics \cite{FW}, to study the tails of the  one-point probability distribution $P(H,t)$ of height $H$. Some of the physics arguments were discussed by Lin and Tsai \cite{LT}.
\vs

Here, we use the convergence of the \eqref{takis1} to \eqref{takis2} to rigorously justify in this paper the full formal physics predictions for sharp wedge initial surface. 
\vs
To describe the problem, we note that, after rescaling,  the \eqref{takis0} takes the form
\be\label{takis00}
\pt_t h - \partial_{x}^2 h +  \dfrac{1}{2}(\partial_x h)^2 =\sqrt{\theta} \xi \ \ \text{in} \ \ \R\times (0,1) \ \ \ h(x,0)=x^2/2\eta,
\ee
and the problem becomes to understand the properties of the solution as $\theta \to 0$ and $h(1,0) \to \oo$.
\vs

Since $\xi$ is square integrable random variable, we know that, for any $\rho\in L^2(\rt)$,
\[ \P(\sqrt{\theta} \xi) \approx \|\rho\|^2_{L^2(\rt})=\ds\int_0^1\ds \int_\R \rho^2 dx dt.\]
In the set $\{\omega : \sqrt{\theta} \xi\}$, \eqref{takis00} is approximated by 
\be\label{takis000}
\pt_t h - \partial_{x}^2 h + \dfrac{1}{2} (\partial_x h)^2 =\rho \ \ \text{in} \ \ \R\times (0,1) \ \ h(x,0)=x^2/2\eta.
\ee

The theory of large deviations suggests that the $h(0,1)\to \oo$ and $\eta\to 0$ behavior of the solution to \eqref{takis000} is governed by the $h(0,1)\to \oo$ and $\eta\to 0$ behavior of the functional 
\be\label{takis0000}
\bs
I(\la,\eta)=\sup &\Big \{- \ds \int_0^1 \ds \int_\R \rho^2 dx dt: \;  \\[1.2mm]
& \pt_t h - \pt_x^2 h +\dfrac{1}{2} (\pt_x h)^2=\rho \ \text{in} \ \rt \ h(x,0)=x^2/(2\eta)  \  h(0,1)=\la \Big \},                      
\end{split}
\ee
with minimizers satisfying the system
\be\label{takis0.0}
\bs 
& \pt_t u -\pt^2_x u +\dfrac{1}{2} (\pt_x u)^2=\rho \ \ 
\text{in} \ \rt \ \ u(x,0)=x^2/2\eta \ \ u(0,1)=\la\\[1.2mm]
& \pt_t \rho + \pt^2 \rho +\pt_x(\rho\pt_x u)=0 \ \ \text{in} \ \ \rt \ \ \rho(\cdot,1)=c(\la) \text{\boldmath $ \delta$},
\end{split}
\ee
where the constant in front of $ \text{\boldmath $ \delta$}$ comes from the constraint $u(0,1)=\la$.
\vs

We remark that standard variational arguments show that the problems  \eqref{takis0000} and \eqref{takis500} are equivalent.

\vs
Upon the rescaling (we do not change the notation to avoid introducing more unknowns) $u(x,t)=\la u(x/\la^{1/2},t)$ and $\rho(x,t)=\la \rho(x/\la^{1/2},t)$, \eqref{takis0.0} becomes
\be\label{takis0.00}
\bs 
& \pt_t u - \frac{1}{\la} \pt^2_x u +\dfrac{1}{2} (\pt_x u)^2=\rho \ \ 
\text{in} \ \rt \ \ u(x,0)=x^2/2\eta \ \ u(0,1)=1\\[1.2mm]
& \pt_t \rho + \frac{1}{\la} \pt^2_x \rho +\pt_x(\rho\pt_x u)=0 \ \ \text{in} \ \ \rt \ \ \rho(\cdot,1)=\la^{-3/2} c(\la) \text{\boldmath $ \delta$},
\end{split}
\ee
which is like \eqref{takis1} with $\ep=1/\la$. 
\vs
We show here that the asymptotics of \eqref{takis0.00} follow from the $\ep\to 0$ and $\eta\to 0$ behavior of the solutions to \eqref{takis1}.

\vs

The paper is organized as follows. In section~2 we discuss the planning problem \eqref{takis2}. In section~3 we 
obtain uniform in $\ep$ and $\eta$ estimates for $u_{\ep,\eta}$ and $\rho_{\ep,\eta}$ in $\R\times (0,1)$. In section~4  we analyze the behavior of $u_{\ep,\eta}$ and $\rho_{\ep,\eta}$ near $t=0$ and $t=1$. Then in section~5 we establish the convergence of  $u_{\ep,\eta}$ and $\rho_{\ep,\eta}$ to the unique solution of the planning problem. Finally, in section~6 we discuss the large deviations of the KPZ equation.

% tain obtain uniform estimates for the solutions to the approximate planning problem and in section~3 we show that, along subsequences, they converge to solutions of the planning problem.   In section~4 we discuss the first order planning problem. We describe the solution proposed in the physics literature and  show that it is the limit of the solution to the approximate planning problem. In section~5 we explain the connection between the large deviations of the KPZ equation and the approximate planning problem and show its asymptotic behavior in the large height limit.

\section{the first-order planing problem}

The first-order planing problem was introduced by the first author in \cite{college de france} and was analyzed for smooth data by Munoz \cite{Mu} and  Porreta \cite{Poreta}. However, there are no results available, when the initial and terminal data are singular measures. In particular, nothing was known about the 
system  \eqref{takis2} as well as its ``regularized''  version 
\be\label{takis200.0}
\bc
\pt_t \overline u_\eta + \dfrac{1}{2}(\partial_x \overline u_\eta)^2 = \overline \rho_\eta \; \; \text{in} \; \; \R\times (0,1), \\[2mm]
\pt_t  \overline \rho_\eta + \pt_x(\partial_x \overline u_\eta  \overline \rho_\eta)=0 \;\; \text{in} \;\; \R\times (0,1),\\[2mm]
 \overline \rho_\eta (\cdot,1)=\text{\boldmath $\delta$} \  \  \text{and} \ \ \overline u_\eta (x,0)= x^2/2\eta.
\ec
\ee

These systems, that is,  \eqref{takis2} and \eqref{takis200.0}, were  discussed  formally in \cite{KMS16, MKV16} which put forward  exact solutions and the claim  (without justification)  that they control the LPD behavior of the height function of the KPZ equation.
%The solutions for \eqref{takis8} and \eqref{takis9}  are similar and, for definiteness, here we only present the one for \eqref{takis9}.  

%The system  \eqref{takis2} was discussed  formally in \cite{KMS16, MKV16} which, taking advantage of the quadratic nonlinearity,  put forward a candidate solution of the form 
%\vs
%
%The idea behind the construction is to look for solutions of the form 

\vs 

The  solutions proposed in
\cite{KMS16, MKV16} for \eqref{takis2} and \eqref{takis200.0} are  respectively  
\be\label{takis11}
\ol \rho(x,t)= r(t) \; \Big(1- \big(\dfrac{x}{l(t)}\big)^2 \Big)_+ \ \text{and} \  \ \ol \rho_\eta(x,t)= r_\eta(t) \; \Big(1- \big(\dfrac{x}{l_\eta(t)}\big)^2 \Big)_+,
\ee
and, in the domains $\mathcal O=\{(x,t): |x|\leq l(t)\}$ and $ \mathcal O_\eta=\{(x,t): |x|\leq l_\eta(t)\}$,
\be\label{takis12}
\ol u(x,t)=k(t) + \dfrac{1}{2} a(t) x^2 \ \ \text{and} \ \ \ol u_\eta(x,t)=k_\eta(t) + \dfrac{1}{2} a_\eta(t) x^2.
\ee
%and 
%\be\label{takis201}
%\ol \rho_\eta(x,t)= r_\eta(t) \; \big(1- \dfrac{|x|^2}{l_\eta(t)^2} \big)_+
%\ee
%and, in the domain $\mathcal O_\eta=\{(x,t): |x|\leq l_\eta(t)\}$, 
%\be\label{takis12}
%\ol u_\eta(x,t)=k_\eta(t) + \dfrac{1}{2} a_\eta(t) |x|^2, 
%\ee

For $\ol \rho$ and $\ol u$ and $\ol\rho_\eta$ and $\ol u_\eta$ to be solutions of \eqref{takis2} and \eqref{takis200.0} respectively, $k$, $\rho$ and $a$ and $k_\eta$, $\rho_\eta$,  and $a_\eta$ must satisfy  respectively  the odes
\be\label{takis13.1}
\dot k=r, \ \ \dot r=r a \ \ \text{and} \ \  \dot a= \bc  (a^2 + \dfrac{32}{9}r^3) \; \; \text{if} \;\; t\in (0,t_1),\\[1mm]
-(a^2 + \dfrac{32}{9}r^3) \; \; \text{if} \;\; t\in (t_1,1), \ec
\ee
and 
\be\label{takis13}
\dot k_\eta=r_\eta , \ \ \dot r_\eta =r_\eta a_\eta \ \ \text{and} \ \  \dot a_\eta= \bc  (a_\eta^2 + \dfrac{32}{9}r_\eta^3) \; \; \text{if} \;\; t\in (0,t_1),\\[1mm]
-(a_\eta^2 + \dfrac{32}{9}r_\eta^3) \; \; \text{if} \;\; t\in (t_1,1), \ec
\ee
where $t_1$ and $t_{\eta, 1}$ are  the times at which $r$ and $r_\eta$  attain their minimum values $r_1$ and $t_{\eta,1}$. 
%
%
%
%the  relationships discussed below, while for  $\ol \rho$  and $\ol \rho_\eta$ to have mass $1$,     $\rho$ and $l$ and $\rho_\eta$ and $l_\eta$ must also be related. 
%
\vs

To  have that $\rho$ and $\rho_\eta$ are probability densities we must have, for all $t\in (0,1)$,  
\be\label{takis12.11}
r (t)l(t)=3/4  \ \ \text{and} \ \ r_\eta(t)l_\eta(t)=3/4.   %\; \lim_{t\to 0} a(t)=0 \ \ \text{and} \ \  \lim_{t\to 1} a(t)=-\infty.
\ee
%To satisfy the system  $k_\eta$, $r_\eta$, $l_\eta$ and $a_\eta$ must must be related by the odes
%\be\label{takis13}
%\dot k_\eta=r_\eta , \ \ \dot r_\eta =r_\eta a_\eta \ \ \text{and} \ \  \dot a_\eta= \bc  (a_\eta^2 + \dfrac{32}{9}r_\eta^3) \; \; \text{if} \;\; t\in (0,t_1),\\[1mm]
%-(a_\eta^2 + \dfrac{32}{9}r_\eta^3) \; \; \text{if} \;\; t\in (t_1,1), \ec
%\ee
%where $t_{\eta, 1}$ is the time when $\_\eta$  attains its  minimum value $t_{\eta,1}.$
%\vs 
%%\be\label{takis12.111}
%%\dot k_\eta=r_\eta, \ \ \dot r_\eta=r_\eta a_\eta \ \ \text{and} \ \  \dot a_\eta= \bc (a_\eta^2 + \dfrac{32}{9}r_\eta^3) \; \; \text{if} \;\; t\in (0,t_{\eta,1}),\\[1mm]
%%-(a^2 + \dfrac{32}{9}r^3) \; \; \text{if} \;\; t\in (0,t_{\eta,1},1), \ec
%%\ee
%where $t_1$ and $t_{\eta, 1}$ are the times when $\rho$ and $\rho_\eta$ are attaining their minimum value.
\vs
Finally, to satisfy the terminal and initial conditions, that is, to have $\rho(\cdot,1)=\rho_\eta(\cdot,1)=
\text{\boldmath $ \delta$}$, 
$\rho(\cdot, 0)=\text{\boldmath $ \delta$}$ 
and $u_\eta(x,0)=x^2/2\eta$, we must  have
\vskip.075in
%f$r_{\eta,0}$, which when $\eta=0$ is $\oo$, to be determined, 
\be\label{takis12.11}
%\bs
%& \lim_{t\to 1} r(t)=\infty  \ \ \text{and} \ \  \lim_{t\to 1} r_\eta(t)=\infty,\\[1.2mm]
%&\text{and}\\
%%and, finally, to have $\rho(\cdot, 0)=\delta$ and $u_\eta(x,0)=x^2/2\eta$ we must have
%%\be\label{takis12.11}
\lim_{t\to 0} r(t)=\lim_{t\to 1} r(t)=\lim_{t\to 1} r_\eta (t)=\infty,  \ \rho_\eta(0)=r_{\eta,0} \ \ \text{and} \ \ a_\eta(0)=1/\eta.
\ee
\vskip.075in
The relationships among the unknown parameters $t_{\eta, 1}$, $r_{\eta,1}$ and  $r_{\eta,0}$ are 
%\be\label{takis12.111}
\[
%\bs
\dfrac{8}{3} r_{\eta,0} \sqrt{ t_\eta-r_{\eta,1}}=1/\eta, \ \ t_{\eta,1}+ \dfrac{3}{16} \dfrac{\pi}{t_{\eta, 1}^{2/3}}=1 \ \text{and} \ t_{\eta,1}=\dfrac{8}{3}\Big( \dfrac{\sqrt{ t_{\eta,0}-r{\eta,1}}}{r_{\eta,0}r_{\eta,1}} + \dfrac{\text{arctan}\sqrt{\dfrac{r_{\eta,0}}{r_{\eta,1}}-1}}{ r_{\eta,1}} \Big),%\\[1.2mm]
%&t_{\eta,1}+ \dfrac{3}{16} \dfrac{\pi}{t_{\eta, 1}^{2/3}}=1,\\[1.2mm]
%t_{\eta,1}=\dfrac{8}{3}\Big( \dfrac{\sqrt{ t_{\eta,0}-r{\eta,1}}}{r_{\eta,0}r_{\eta,1}} + \dfrac{\text{arctan}\sqrt{\dfrac{r_{\eta,0}}{r_{\eta,1}}-1}}{ r_{\eta,1}}  \Big),
%\end{split}
\]
while 
%\be\label{takis12.1111}
\[
t_1=1/2 \ \ \text{ and} \ \  r_1=\big(3\pi/16\big)^{2/3}.
\]
\vs

We also remark, for future use, that, 
as $\eta\to 0$, 
\be\label{takis700}
\ol \rho_\eta \to \ol \rho \ \ \text{ in}  \ \ L^1(\rt) \ \  \text{ and} \ \  \ol u_\eta \to  \ol u \ \ \text{ locally uniformly in $\rt$}.
\ee
\vs

Tedious but straightforward calculations also yield that, as $t\to 0$, 
\be\label{takis210}
r(t)\approx \dfrac{1}{(4t)^{2/3}}, \ \ a(t)\approx \dfrac{2}{3t} \ \ \text{and} \ \ 
r_\eta\approx \dfrac{1}{(4(t+\eta))^{2/3}}, \ \ a_\eta(t)\approx \dfrac{2}{3(t+\eta)},
\ee
and, as $t\to 1$, 
\be\label{211}
r(t)\approx \dfrac{1}{(4(1-t))^{2/3}}, \ \ a(t)\approx -\dfrac{2}{3}\dfrac{1}{1-t} \ \ \text{and} \ \ 
r_\eta\approx  \dfrac{1}{(4(1-t))^{2/3}}, \ \ a(t)_\eta\approx -\dfrac{2}{3(1-t)}.
\ee
\vs

We remark, if $\rho\equiv 0$, then $a(t)=\dfrac{1}{2t}$. The difference in the coefficient in front of $1/t$ is a result
of the presence of $\ol \rho$ and the singularity of $\ol \rho$ near $t=0$. 
\vs

It is  immediate that $\ol \rho$ and $\ol u$  and $\ol \rho_\eta$ and $\ol u_\eta$ are solutions to  \eqref{takis2} in $\mathcal O$ and \eqref{takis200.0} in $\mathcal O_\eta$  respectively, where the first equation is understood in the viscosity sense and the second in the sense of distributions. 
\vs%It also follows  that $\ol \rho$ and $\ol \rho_\eta$ are  distributional solutions in $\rt$ to the continuity equation  in \eqref{takis2} and \eqref{takis200} respectively. 
To get a solution of the Hamilton-Jacobi equation in \eqref{takis2} (resp. \eqref{takis200.0}) in $\rt$, $\ol u$ (resp. $\ol u_\eta$) is extended to  be a solution of 
\[\pt_t \ol u +\dfrac{1}{2}(\pt_x \ol u)^2=0 \ \text{in} \ \R\times (0,1) \setminus {\ol {\mathcal O}}\ \ (\text{resp.}  \ \pt_t \ol u +\dfrac{1}{2}(\pt_x \ol u)^2=0 \ \text{in} \ \R\times (0,1) \setminus {\ol {\mathcal O_\eta}}) \]
 using the method of characteristics by connecting any  point $(x,t) \in R\times(0,1) \setminus {\ol {\mathcal O}}$ (resp.  $(x,t) \in R\times(0,1) \setminus {\ol {\mathcal O_\eta}}$)  to  $\pt {\mathcal O}$ (resp. $\pt \mathcal O_\eta$)
by a straight line tangent to  $\pt \mathcal O$ (resp. $\pt \mathcal O_\eta$) at $(l(t_0), t_0)$ (resp. $(l_\eta (t_{\eta,0},t_{\eta,0} ))$ and then moving along $\pt \mathcal O$ to the origin,  that is, for any $(x,t) \in R\times(0,1) \setminus {\ol {\mathcal O}}$ (resp. $(x,t) \in R\times(0,1) \setminus {\ol {\mathcal O_\eta}})$,  there exists $t_0\in (0,t)$ (resp. $t_{\eta, 0} \in (0,t)$) such that $x=l(t_0) + (t-t_0)\dot l(t_0)$  (resp. $x=l_\eta(t_{\eta,0}) + (t-t_{\eta,0})\dot l_\eta(t_{\eta,0})$) and 
\be\label{takis14.0}
\ol u(x,t)=\ol u(l(t_0),t_0) + \dfrac{1}{2} \dot l(t_0)^2 (t-t_0)=\ol u(l(t_0),t_0) + \dfrac{(x-l(t_0))^2}{2(t-t_0)},
\ee
\Big(resp. 
\be\label{takis14.1}
\ol u_\eta(x,t)=\ol u_\eta(l(t_{\eta,0}),t_{\eta,0}) + \dfrac{1}{2} \dot l_\eta(t_{\eta,0})^2 (t-t_{\eta,0})=\ol u_\eta(l_\eta(t_{\eta,0}),t_{\eta,0}) + \dfrac{(x-l_\eta(t_{\eta,0}))^2}{2(t-t_{\eta,0}t_0)}. \Big)
\ee
\vs

As we show below, the planning problem \eqref{takis2} and the system \eqref{takis200.0} are  associated respectively with the strictly convex variational problems 
\be\label{takis15}
\ol I= \min \Big\{\int_0^1\int_\R \dfrac{1}{2}\big[\rho^2 + \dfrac{\beta^2}{\rho}\big] dx dt \;: \rho, \beta \in \mathcal A \Big\}
\ee
where 
\be\label{takis241}
\begin{split}
& \mathcal A=\Big\{ \rho \in C([0,1]; \mathcal P_2) \cap L^2(\rt): \;  \pt_t \rho + \pt_x \beta=0 \ \ \rho(\cdot,0)=\rho(\cdot,1)=\text{\boldmath $\delta$} \\[1.2mm]
%\beta \in L^{4/3}(\rt), \\
& \beta \in L^{4/3}(\rt) \ \  |\beta|\leq A\rho^{1/2} \ \text{for some}  \ A \in L^2(\rt),
 \dfrac{\beta^2}{\rho} =0 \ \text{a.s. in} \ \{\rho=0\} \Big \}, 
 %& \pt_t \rho + \pt_x \beta=0 \ \ \rho(\cdot,0)=\rho(\cdot,1)=\delta \Big\},
\end{split}
\ee
and
\be\label{takis15.1}
\ol I_\eta= \min \Big\{  
\int_0^1\int_\R \dfrac{1}{2} \big[\rho^2 + \dfrac{\beta^2}{ \rho}\big] dx dt +\int_\R \rho(x,0) \dfrac{x^2}{2\eta} dx \;: \rho, \beta \in \mathcal A_\eta \Big\},
\ee
where
\be\label{takis240}
\bs & \mathcal A_\eta=\Big \{ \rho \in C([0,1]; \mathcal P_2) \cap L^2(\rt) :\; \pt_t \rho + \pt_x \beta=0 \ \text{in} \ \rt  \ \ \rho(\cdot,1)=\text{\boldmath $\delta $}\\[1.2mm]
 & \beta \in L^{4/3}(\rt) \ \  |\beta|\leq A\rho^{1/2} \ \text{for some} \ A \in L^2(\rt),  \dfrac{\beta^2}{\rho} =0 \ \text{a.s. in} \ \{\rho=0\} \Big \},\\
%& \pt_t \rho + \pt_x \beta=0 \ \text{in} \ \rt  \ \ \rho(\cdot,1)=\delta \Big \}.
\end{split}
\ee
which have a unique minimizers.  
\vs

We remark that in both $\mathcal A$ and $\mathcal A_\eta$,  $\beta \in L^{4/3}(\rt) $ since $\beta= \frac{\beta}{\sqrt{\rho}} \sqrt{\rho}$ and $\frac{\beta}{\sqrt{\rho}} \in L^2(\rt)$ and $\sqrt{\rho} \in L^4(\rt)$.

\vs

We also note that the constraints $|\beta|\leq \sqrt{\rho}$ and $\frac{\beta^2}{\rho}=0$ a.s. in $\{\rho=0\}$  are both  convex.
\vs
Indeed,  if $A_1, A_2\in L^2(\rt)$, then 
$\sqrt{\rho_1} A_1 + \sqrt{\rho_2} A_2 \leq 2\sqrt{\rho_1 + \rho_2} \max(A_1, A_2)$,
 and $\max(A_1, A_2)\in L^2(\rt)$.
\vs

Also,  since $\{\frac{\rho_1+\rho_2}{2}=0\}\subset \{\rho_1=0\}\cup\{\rho_2=0\}$ and $\frac{\beta_1^2}{\rho_1 +\rho_2}\leq \frac{\beta^2_1}{\rho_1}$ and $\frac{\beta_2^2}{\rho_1 +\rho_2}\leq \frac{\beta^2_2}{\rho_2}$, it follows that  $\frac{\beta_1^2}{\rho_1 +\rho_2}=0$ in $\{\rho_1=0\}$ and  $\frac{\beta_2^2}{\rho_1 +\rho_2}=0$ in $\{\rho_2=0\}$.

\vs
\vs

\begin{lem}\label{lem10}
The solutions $\ol \rho_\eta, \ol u_\eta$ and  $\ol \rho, \ol u$ of \eqref{takis200.0} and \eqref{takis2} are respectively the 
unique minimizers of $\ol I_\eta$ and $\ol I$.
\end{lem}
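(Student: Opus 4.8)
The plan is to exploit the strict convexity of the functionals in \eqref{takis15} and \eqref{takis15.1} together with the structure of \eqref{takis2}--\eqref{takis200.0}, reducing the lemma to a single integration-by-parts identity. Set $\ol\beta:=\ol\rho\,\pt_x\ol u$ and $\ol\beta_\eta:=\ol\rho_\eta\,\pt_x\ol u_\eta$; the assertion is that $(\ol\rho,\ol\beta)$ and $(\ol\rho_\eta,\ol\beta_\eta)$ are the minimizers. The starting point is the elementary pointwise inequality: using the Hamilton--Jacobi equation of \eqref{takis2} in the form $\pt_t\ol u=\ol\rho-\tfrac12(\pt_x\ol u)^2$ (valid a.e., with $\ol\rho\equiv 0$ and $\pt_t\ol u+\tfrac12(\pt_x\ol u)^2=0$ off $\ol{\mathcal O}$), one checks, separately on $\ol{\mathcal O}$ and on its complement, that for a.e. $(x,t)$ and every pair $(\rho,\beta)$ with $\rho\ge 0$ and $\beta=0$ on $\{\rho=0\}$,
\[
\tfrac12\Big(\rho^2+\tfrac{\beta^2}{\rho}\Big)-\tfrac12\Big(\ol\rho^2+\tfrac{\ol\beta^2}{\ol\rho}\Big)-\pt_t\ol u\,(\rho-\ol\rho)-\pt_x\ol u\,(\beta-\ol\beta)=\tfrac12(\rho-\ol\rho)^2+\tfrac1{2\rho}\big(\beta-\rho\,\pt_x\ol u\big)^2\ge 0,
\]
with equality if and only if $(\rho,\beta)=(\ol\rho,\ol\beta)$ a.e., and similarly with $\ol u_\eta,\ol\rho_\eta,\ol\beta_\eta$. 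Integrating this over $\rt$, the lemma follows provided (i) $(\ol\rho,\ol\beta)\in\mathcal A$ and $(\ol\rho_\eta,\ol\beta_\eta)\in\mathcal A_\eta$, and (ii) the coupling term $\int_0^1\int_\R[\pt_t\ol u\,(\rho-\ol\rho)+\pt_x\ol u\,(\beta-\ol\beta)]$ vanishes for every $(\rho,\beta)\in\mathcal A$ (resp. equals the difference of the two penalizations for $(\rho,\beta)\in\mathcal A_\eta$); uniqueness then comes from the equality case.

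For (i), the closed forms \eqref{takis11}--\eqref{takis12}, the normalization \eqref{takis12.11}, and the endpoint asymptotics \eqref{takis210}--\eqref{211} show at once that $\ol\rho(\cdot,t)$ is a probability density with second moment $\sim l(t)^2\to 0$ as $t\to 0,1$, so $\ol\rho\in C([0,1];\mathcal P_2)\cap L^2(\rt)$ with $\ol\rho(\cdot,0)=\ol\rho(\cdot,1)=\text{\boldmath$\delta$}$ and $\int_0^1\int_\R\ol\rho^2\sim\int_0^1 r(t)\,dt<\infty$, while $\int_0^1\int_\R\ol\beta^2/\ol\rho=\int_0^1\int_\R\ol\rho(\pt_x\ol u)^2\sim\int_0^1 a(t)^2l(t)^2\,dt<\infty$ (using $r\sim t^{-2/3}$, $a\sim t^{-1}$, $l\sim t^{2/3}$ near $t=0$, symmetrically near $t=1$). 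Taking $A:=(\ol\beta^2/\ol\rho)^{1/2}\in L^2(\rt)$ one has $|\ol\beta|\le A\ol\rho^{1/2}$ and $\ol\beta^2/\ol\rho=0$ on $\{\ol\rho=0\}$, so $(\ol\rho,\ol\beta)\in\mathcal A$; the same computation gives $(\ol\rho_\eta,\ol\beta_\eta)\in\mathcal A_\eta$, where in addition $\ol\rho_\eta(\cdot,0)$ is a bounded density with $\int_\R\ol\rho_\eta(x,0)x^2/2\eta\,dx<\infty$.

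For (ii), write $\sigma:=\rho-\ol\rho$, $\gamma:=\beta-\ol\beta$, so $\pt_t\sigma+\pt_x\gamma=0$ in $\rt$. On $\R\times[\delta,1-\delta]$ a standard mollification of the continuity equation tested against $\ol u$ gives
\[
\int_\delta^{1-\delta}\!\!\int_\R\big[\pt_t\ol u\,\sigma+\pt_x\ol u\,\gamma\big]\,dx\,dt=\int_\R\ol u(x,1-\delta)\sigma(x,1-\delta)\,dx-\int_\R\ol u(x,\delta)\sigma(x,\delta)\,dx,
\]
and since $\int_\R\sigma(\cdot,t)\,dx=0$ for $t\in(0,1)$, one may replace $\ol u(x,t)$ by $\ol u(x,t)-\ol u(0,t)$ in the two boundary integrals. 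Because $0\le\ol u(x,t)-\ol u(0,t)\le C(t)x^2$ with $C(t)$ comparable to $a(t)\sim t^{-1}$ near $t=0$ (and $\sim(1-t)^{-1}$ near $t=1$) --- immediate on $\ol{\mathcal O}$ where $\ol u=k+\tfrac12 ax^2$, and via the characteristic formula \eqref{takis14.0} off $\ol{\mathcal O}$ --- the $t=\delta$ boundary integral is bounded by $C(\delta)\big(m_\rho(\delta)+m_{\ol\rho}(\delta)\big)$, where $m$ denotes the second moment. Now $m_{\ol\rho}(\delta)\sim l(\delta)^2\sim\delta^{4/3}$, while for $\rho\in\mathcal A$, differentiating $m_\rho(t)=\int_\R x^2\rho\,dx$, using $\pt_t\rho=-\pt_x\beta$ and Cauchy--Schwarz gives $\big|\tfrac{d}{dt}\sqrt{m_\rho(t)}\big|\le\big(\int_\R\beta^2/\rho\,dx\big)^{1/2}$, whence, since $m_\rho(0)=0$, $m_\rho(\delta)\le\delta\int_0^\delta\int_\R\beta^2/\rho\,dx\,dt=o(\delta)$; thus $C(\delta)(m_\rho(\delta)+m_{\ol\rho}(\delta))\to 0$, and the $t=1-\delta$ term is handled identically. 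Integrating the pointwise identity over $\R\times[\delta,1-\delta]$ and letting $\delta\to 0$, the coupling term disappears and we get $\int_0^1\int_\R\tfrac12(\rho^2+\beta^2/\rho)\ge\int_0^1\int_\R\tfrac12(\ol\rho^2+\ol\beta^2/\ol\rho)$ with equality only at $(\ol\rho,\ol\beta)$, so $(\ol\rho,\ol u)$ is the unique minimizer of $\ol I$. For $\ol I_\eta$, since now $\sigma(\cdot,1)=0$, the same computation leaves the single boundary term $-\int_\R\ol u_\eta(x,0)\sigma(x,0)\,dx=-\int_\R\tfrac{x^2}{2\eta}\big(\rho(x,0)-\ol\rho_\eta(x,0)\big)\,dx$ --- where one uses the continuity $\rho(\cdot,\delta),\ol\rho_\eta(\cdot,\delta)\to\rho(\cdot,0),\ol\rho_\eta(\cdot,0)$ in $\mathcal P_2$ and the quadratic growth of $\ol u_\eta(\cdot,0)=x^2/2\eta$ --- which is exactly the difference of the two penalizations; moving it across, the integrated pointwise identity gives that $(\ol\rho_\eta,\ol u_\eta)$ is the unique minimizer of $\ol I_\eta$.

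The main obstacle is precisely step (ii): all the relevant objects degenerate at $t=0,1$ --- $\ol u$ blows up, $\ol\rho$ and $\ol\beta$ are unbounded, and the boundary data are Dirac masses --- so the integration by parts must be done on $\R\times[\delta,1-\delta]$ and the limit $\delta\to 0$ taken with quantitative control. The decisive ingredients are the sharp rates $m_{\ol\rho}(\delta)\sim\delta^{4/3}$ and, for admissible competitors, $m_\rho(\delta)=o(\delta)$ (a consequence of finite energy via the differential inequality above), which beat the $C(\delta)\sim\delta^{-1}$ growth coming from $\ol u$, together with the device of subtracting $\ol u(0,t)$ --- legitimate because $\rho(\cdot,t)$ and $\ol\rho(\cdot,t)$ are probability measures --- which discards the harmless but divergent constant $k(t)$ and isolates the second-moment contribution. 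In the $\eta$-problem the $t=0$ boundary is benign: there $\ol\rho_\eta(\cdot,0)$ is a bounded density and $\ol u_\eta(\cdot,0)=x^2/2\eta$ is exactly the penalization weight, so the cancellation with the penalization is automatic.
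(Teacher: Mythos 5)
Your proof is correct and follows essentially the same route as the paper: the first-variation/convexity inequality for the strictly convex integrand (your completed square is just its exact form), conversion of the linear term into a time derivative by testing the continuity equation against $\ol u$, and control of the $t\to 0$ boundary term through the second-moment differential inequality $\frac{d}{dt}\big(\int_\R x^2\rho\,dx\big)^{1/2}\le \big(\int_\R \beta^2/\rho\,dx\big)^{1/2}$, which is exactly the computation the paper uses to prove \eqref{takis245.1}. Your write-up is, if anything, more explicit than the paper's at the delicate points (admissibility of $(\ol\rho,\ol\beta)$, the $t=1-\delta$ boundary term, and the device of subtracting $\ol u(0,t)$ using $\int_\R(\rho-\ol\rho)\,dx=0$), where the paper is terser.
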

\begin{proof}
The uniqueness is an immediate consequence of the convexity of the functionals.
\vs

To show  the minimizing property of $\ol \rho_\eta, \ol u_\eta$, we fix   $\rho, \beta \in \mathcal A_\eta$ and observe that, in view of the convexity of the map $(\rho, \beta) \to \dfrac{1}{2}(\rho^2 + \dfrac{\beta^2}{\rho})$, we have, for $\ol \beta =\pt_x\ol u/\ol \rho$, 
\[ \bs \ds\int_0^1\ds\int_\R \dfrac{1}{2}\big[\rho^2 + &\dfrac{\beta^2}{\rho}\big] dx dt + \ds \int_R \rho (x,0) \dfrac{x^2}{2\eta}  \geq \ds\int_0^1\ds\int_\R \dfrac{1}{2}[{\ol \rho}^2 +\dfrac{{\ol \beta}^2}{\ol \rho} \big]dx dt + \ds\int_\R  \ol \rho(x,0) \dfrac{x^2}{2\eta} dx \\[1.2mm]
& +  \ds\int_0^1\ds\int_\R \big[(\rho -\ol \rho)\ol \rho + (\beta-\ol \beta) \dfrac{\ol \beta}{\ol \rho} - \dfrac{1}{2}\dfrac{\ol \beta^2}{\ol \rho^2}(\rho-\ol \rho)  \big] dx dt + \ds \int_\R (\rho-\ol \rho)(x,0) \dfrac{x^2}{2\eta} dx\\
& = \ds\int_0^1\ds\int_\R \big[(\rho -\ol \rho)\ol \rho + (\beta-\frac{\pt_x \ol u}{\ol \rho} )\pt_x \ol u - \dfrac{1}{2}(\rho-\ol \rho)(\pt_x\ol u)^2 \big]dx dt + \ds \int_\R (\rho-\ol \rho)(x,0) \dfrac{x^2}{2\eta} dx \\
& = \ds\int_0^1\ds \int_\R \big[\pt_t \ol u (\rho-\ol \rho) +  \pt_t (\rho-\ol \rho) \big]dx dt + \ds \int_\R (\rho-\ol \rho)(x,0) \dfrac{x^2}{2\eta} dx\\
& = \ds\int_0^1\ds \int_\R \pt_t\big[ \ol u (\rho-\ol \rho) \big]dx dt + \ds \int_\R (\rho-\ol \rho)(x,0) \dfrac{x^2}{2\eta} dx=0 % =\ds\int_0^1\ds \int_\R \big[\pt\big]
\end{split}
\]
\vs
The claim about the minimizing property of  $\ol \rho, \ol u$ follows as the one for $\ol \rho_\eta, \ol u_\eta$ the only difference being that  now $\rho, \beta \in \mathcal A$ and instead of integrating in time from $0$ to $1$, we now integrate over $(h,1)$ and then need to show that 
\be\label{takis245.1}
\underset{h\to 0} \lim \ds\int_\R \ol u(x,r) (\rho-\ol \rho)(x,h) dx=0,
\ee

%It is immediate from the formulae for  $\ol \rho$ and $\ol u$ that 
%\[\underset{h\to 0} \lim \ds\int_R \ol u(x,0) \ol \rho(x,0) dx=0.\]
\vs

To conclude the proof of \eqref{takis245.1} we first remark that, in view of the definition of $\ol u$, we have 
\[\underset{h\to 0} \lim \ds\int_R \ol u(x,h) \rho(x,h) dx=\underset{h\to 0} \lim \ds\int_R \rho(x,h) x^2dx=0,\]
the last equality being a consequence of the fact that $\rho(\cdot,0)=\text{\boldmath $\delta$}.$
\vs

Then, multiplying the equation satisfied by $\rho$ by $x^2$ and integrating over $\R$ gives
\[
\dfrac{d}{dt} \ds\int_\R \rho(x,t) x^2 dx\leq 2 \big(\ds \int_\R \rho(x,t) x^2 dx \big)^{1/2} \big(\ds \int_\R \dfrac{\beta^2}{\rho}(x,t) dx \big)^{1/2}\]
and, thus, 
\[
\dfrac{d}{dt} \big(\ds\int_\R \rho(x,t) x^2 dx \big)^{1/2} \leq 2 \Big(\ds \int_\R \dfrac{\beta^2}{\rho}(x,t) dx \Big)^{1/2},\]
and, since $\beta/\sqrt{\rho} \in L^2(\rt)$,
\[
 \ds\int_\R \rho(x,t) x^2 dx \ \leq C t.\]

\end{proof}

%\[\bs
%\ds \int_0^1\ds\int_\R \pt_t \ol u_\eta (\rho-\ol \rho_\eta) dx dt=\ds \int_0^1\ds\int_\R \ol \rho (\rho- \ol \rho_\eta) dx dt -
%\ds \int_0^1\ds\int_\R \dfrac{1}{2} (\pt_x \ol u_\eta)^2 (\rho-\ol \rho_\eta) dx dt\\
%=-\ds \int_\R \dfrac{x^2}{2\eta} (\rho(x,0)-\ol \rho_\eta(x,0)) dx - \ds \int_0^1\ds\int_\R \ol u_\eta  (\rho-\ol \rho_\eta) dx dt,
%\end{split}
%\]
%implies 
%\[\bs
%\ds \int_0^1\ds\int_\R
%
%\end{split}
%\] 
%

%We recall  that, in view of Proposition~\ref{prop10}, we know that, for any $\alpha \in (0,1)$,
% \[\underset{\ep\to 0,\eta\to 0, \ep^2\leq \eta^\alpha} \liminf I_{\ep,\eta} \geq \ol I.\] 
%\vs

\vs

Finally, we mention  that 
\eqref{takis15}  and \eqref{takis15.1} can be rewritten as 
\be\label{takis15.11}
\ol I=  \min \Big\{\int_0^1\int_\R \dfrac{1}{2} \big[ \rho^2 +  {\alpha^2 \rho} \big] dx dt: \;\rho, \alpha \in \mathcal B\Big\} 
\ee
with 
%\[\bs\mathcal B=
%&\Big\{ \rho \in C([0,1]; \mathcal P_2) \cap L^2(\rt), \\
%%\beta \in L^{4/3}(\rt), \\
%& \sqrt{\rho} \alpha \in  L^2(\rt), \  \pt_t \rho + \pt_x \beta=0 \ \ \rho(\cdot,0)=\rho(\cdot,1)=\delta \Big\},
%\end{split}
%\]
\[\bs
\mathcal B=& \Big\{ \rho \in C([0,1]; \mathcal P_2) \cap L^2(\rt):  \pt_t \rho + \pt_x( \alpha \rho)=0 \ \text{in} \ \rt, \\[1.2mm]  & \rho(\cdot,0)=\rho(\cdot,1)=\text{\boldmath $\delta$}, \  \sqrt{\rho} \alpha \in  L^2(\rt)\Big\},
\end{split}
\]
and 
\be\label{takis15.14}
\bs 
\ol I=  \min \Big\{\int_0^1\int_\R \dfrac{1}{2} \big[ \rho^2 +  {\alpha^2 \rho} \big] dx dt +\ds \int_\R \rho(x,0) \dfrac{x^2}{2\eta} dx \;: \rho, \alpha \in \mathcal B_\eta\Big\},
\end{split} 
\ee
with 
\[\bs
\mathcal B_\eta =&\Big \{ \rho \in C([0,1]; \mathcal P_2) \cap L^2(\rt): \\%  \beta \in L^{4/3}(\rt),\\
&  \sqrt{\rho} \alpha \in  L^2(\rt),  \ 
 \pt_t \rho + \pt_x  (\rho \alpha)=0 \ \text{in} \ \rt  \ \ \rho(\cdot,1)=\delta \Big \}.
\end{split}
\]

%\overline u(x,t)=  \begin{cases}  k(t) + \frac{1}{2} a(t) |x|^2 \ \ \text{in} \ \ \mathcal O=\{(x,t): |x|\leq l(t) \}\\[2mm]  \displaystyle \int_0^{t_0} \frac{1}{2} (l \dot (s))^2 ds + \frac{(x-l(t_0))^2}{2(t-t_0)}  \ \ \text{and} \ \ 
%x=l(t_0) + (t-t_0) l \dot (t_0)
\vs

In the next two propositions, we record the properties of $\ol u$ and $\ol \rho$ and $\ol u_\eta$ and $\ol \rho_\eta$ that we  will use later in the paper, and we begin with the former.
%rge Here we show that the ``physics''  solution $(\ol \rho, \ol u)$   is the minimizer of $\ol I$ and  also satisfies some important properties which we will be used to prove that is actually the unique solution of \eqref{takis2}. 
\vs

\begin{prop}\label{prop1.0}
(i)~For any $\theta \in (0,1)$ and $(x,t)\in \R\times (\theta,1)$,
\be\label{takis20.1}
\ol u(x,t)=\inf \big\{\int_{t-\theta}^t \big[\dfrac{(\dot x(s))^2}{2} + \ol \rho(x(s),s)\big] ds + \ol u(x(\theta),\theta) \; : x\in H^1((0,1)) \ \text{and} \  x(t)=x \big\}.
\ee

(ii)~For any $(x,t)\in \R\times (0,1)$,
\be\label{takis21.0}
\ol u(x,t)=\inf \big\{\int_0^t \big[\dfrac{(\dot x(s))^2}{2} + \ol \rho(x(s),s)\big] ds \; : x\in H^1((0,1)),   \ x(t)=x \ \text{and} \ x(0)=0 \big\}.
\ee

(iii)~For any $(x,t) \in \R\times (0,1)$,
\be\label{takis22.0}
\ol u(x,t)\geq \dfrac{x^2}{2t}.
\ee

(iv)~For any $x\in \R$ and uniformly, 
\be\label{takis23.0}
\underset{t\to 0} \limsup \big[\ol u(x,t)-\dfrac{x^2}{2t}\big]\leq 0.
\ee
\end{prop}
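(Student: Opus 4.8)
The plan is to establish the four assertions about $\ol u$ in order, since (i) feeds (ii), and (ii) feeds both (iii) and (iv).

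\textbf{Part (i).} The function $\ol u$ solves the Hamilton--Jacobi equation $\pt_t\ol u+\tfrac12(\pt_x\ol u)^2=\ol\rho$ in $\rt$ in the viscosity sense, with Hamiltonian $H(p)=\tfrac12 p^2$ and Lagrangian $L(q)=\tfrac12 q^2$, and running cost $-\ol\rho(x,s)$ (equivalently, source $\ol\rho$). The representation formula \eqref{takis20.1} is then the classical Lax--Oleinik/optimal control formula for such equations, valid on any strip $(\theta,1)$ on which $\ol\rho$ is a bounded (indeed smooth away from the parabola boundary, and globally in $L^\infty_{loc}(\rt)$) continuous source term. I would first verify that $\ol\rho$ is bounded and continuous on $\R\times[\theta,1]$: inside $\mathcal O$ this is clear from $\ol\rho(x,t)=r(t)(1-(x/l(t))^2)_+$ with $r$ continuous and bounded on $[\theta,1]$ by \eqref{takis210}--\eqref{211}, and outside $\mathcal O$ we have $\ol\rho=0$. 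Then one invokes the standard dynamic-programming verification: define $w(x,t)$ to be the right-hand side of \eqref{takis20.1}; show $w$ is a viscosity solution of the same equation on $\R\times(\theta,1)$ with $w(\cdot,\theta)=\ol u(\cdot,\theta)$; and conclude $w=\ol u$ by uniqueness for the Cauchy problem with continuous bounded source and the (quadratic, hence locally Lipschitz) initial data. The only mild subtlety is that $\ol u(\cdot,\theta)$ grows quadratically, so one uses a comparison principle in the class of functions with at most quadratic growth, which holds because the Hamiltonian is convex and the value function inherits the quadratic bound.

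\textbf{Part (ii).} Formula \eqref{takis21.0} is obtained from \eqref{takis20.1} by letting $\theta\uparrow t$, i.e. taking the initial time down to $0$, and incorporating the terminal/initial condition $\ol u(0,0)=0$ together with $\ol\rho(\cdot,0)=\text{\boldmath$\delta$}$. Concretely, in \eqref{takis20.1} send $\theta\to t$ (so $x(\theta)$ ranges over curves starting at time $0$) and use that $\ol u(x(0),0)$ is forced, in the limit $\eta\to0$ picture, to be $+\infty$ unless $x(0)=0$, where it equals $0$; this is the content of the constraint $\ol\rho(\cdot,0)=\text{\boldmath$\delta$}$ and $\ol u(0,0)=0$ in \eqref{takis2}. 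Rigorously I would argue by two inequalities: the ``$\le$'' direction by plugging admissible curves with $x(0)=0$ into \eqref{takis20.1} and passing $\theta\to t$ using lower semicontinuity of the action and continuity of $\ol u$ on $\rt$ (so $\ol u(x(\theta),\theta)\to\ol u(x(0),0)$ along the limiting curve, which we may take to pass through $0$ at time $0$); the ``$\ge$'' direction by noting that any near-optimizer in \eqref{takis20.1} has $\ol u(x(\theta),\theta)\ge \tfrac{x(\theta)^2}{2\theta}$ once (iii) is known at interior times — but to avoid circularity one instead uses directly the explicit form of $\ol u$ near $t=0$ from \eqref{takis12}, \eqref{takis210}, namely $\ol u(x,t)=k(t)+\tfrac12 a(t)x^2$ with $k(t)\to0$ and $a(t)\sim 2/(3t)$, to see $\ol u(x(\theta),\theta)\ge (1+o(1))\tfrac{x(\theta)^2}{3\theta}\ge 0$ and $\to0$ only if $x(\theta)\to0$ fast enough; combining with the cost of steering to $0$ gives the reverse bound. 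The cleanest route is: prove (iii) and (iv) \emph{first} from the explicit formula and the characteristics construction, then derive (ii) using them.

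\textbf{Parts (iii) and (iv).} Inside $\ol{\mathcal O}$, \eqref{takis22.0} and \eqref{takis23.0} follow directly from $\ol u(x,t)=k(t)+\tfrac12 a(t)x^2$ together with the asymptotics \eqref{takis210}: since $a(t)\approx 2/(3t)\ge 1/t$ for $t$ small we actually get the stronger $\ol u(x,t)\ge \tfrac{x^2}{2t}$ there (and more), while $k(t)\to0$ as $t\to0$ gives $\limsup_{t\to0}(\ol u(x,t)-x^2/2t)\le \lim k(t)=0$ at $x=0$, and the quadratic-in-$x$ structure makes this uniform on compact $x$-sets once one checks $a(t)-1/t\to -\infty$ is not needed — rather, $a(t)\le (1+o(1))\cdot 2/(3t)<1/t$ is false, so one must be slightly careful: in fact $2/(3t)>1/(2t)$, consistent with (iii), and for (iv) one uses $\tfrac12(a(t)-1/t)x^2\le 0$ for small $t$ since $a(t)<1/t$ fails; I will instead note $a(t)x^2/2 - x^2/2t = x^2(a(t)t-1)/2t$ and $a(t)t\to 2/3<1$, so the bracket is negative for small $t$, giving \eqref{takis23.0} uniformly on compacts. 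Outside $\ol{\mathcal O}$, one uses the characteristics formula \eqref{takis14.0}: $\ol u(x,t)=\ol u(l(t_0),t_0)+\tfrac{(x-l(t_0))^2}{2(t-t_0)}$ with $(l(t_0),t_0)\in\pt\mathcal O$, and the inequality $\ol u(l(t_0),t_0)\ge \tfrac{l(t_0)^2}{2t_0}$ from the interior case plus the elementary $\tfrac{l(t_0)^2}{2t_0}+\tfrac{(x-l(t_0))^2}{2(t-t_0)}\ge \tfrac{x^2}{2t}$ (Cauchy--Schwarz / convexity of $q\mapsto q^2/2$ split over $[0,t_0]$ and $[t_0,t]$) yields (iii) everywhere; (iv) is then only about $x$ near $0$, where for small $t$ the point $(x,t)$ lies inside $\mathcal O$ (because $l(t)\to\infty$ as $t\to0$ by \eqref{takis12.11} and \eqref{takis210}), so the outside case is irrelevant for (iv). \textbf{The main obstacle} I anticipate is making Part (i) fully rigorous despite the source $\ol\rho$ being only Lipschitz-in-space with a corner on $\pt\mathcal O$ and the data growing quadratically: one needs a comparison principle for viscosity solutions of $\pt_t u+\tfrac12(\pt_x u)^2=\ol\rho$ in the class of quadratically-growing functions, which is standard but must be cited or sketched carefully; everything else reduces to the explicit ODE asymptotics \eqref{takis210}--\eqref{211} and the elementary inequality used repeatedly for (iii).
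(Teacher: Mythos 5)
There is a genuine gap, and it stems from a factual error about the geometry of $\mathcal O$. You assert that ``$l(t)\to\infty$ as $t\to0$ by \eqref{takis12.11} and \eqref{takis210}'' and use this to conclude that for small $t$ the point $(x,t)$ lies inside $\mathcal O$, so that the outside case is irrelevant for (iv). This is backwards: the normalization $r(t)l(t)=3/4$ together with $r(t)\to\infty$ (forced by $\ol\rho(\cdot,0)=\text{\boldmath$\delta$}$, and quantified by $r(t)\approx(4t)^{-2/3}$ in \eqref{takis210}) gives $l(t)\approx\tfrac34(4t)^{2/3}\to0$. Hence for every fixed $x\neq0$ the point $(x,t)$ is \emph{outside} $\ol{\mathcal O}$ for all small $t$ --- exactly as the paper states --- and your argument for (iv) covers only $x=0$. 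The outside case cannot be waved away: one must compare $\ol u(l(t_0),t_0)+\tfrac{(x-l(t_0))^2}{2(t-t_0)}$ with $\tfrac{x^2}{2t}$ as $t,t_0,l(t_0)\to0$, which is not immediate. Your interior argument for (iii) is also broken as written: with $a(t)\approx 2/(3t)$ one has $\tfrac12 a(t)x^2\approx\tfrac{x^2}{3t}<\tfrac{x^2}{2t}$, so the quadratic coefficient alone does \emph{not} give $\ol u\geq x^2/2t$; you would have to bring in $k(t)=\int_0^t r$ and track exact constants (and the ``$\approx$'' asymptotics of \eqref{takis210} do not yield an exact inequality on all of $(0,1)$).

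The paper avoids all of this by ordering the steps the other way. From (i) it first shows that $\ol u(x,t)\to\infty$ as $t\to0$ for every $x\neq0$ --- precisely because $(x,t)$ exits $\ol{\mathcal O}$ and the characteristics formula gives $\ol u(x,t)\geq\tfrac{(x-l(t_0))^2}{2(t-t_0)}\geq\tfrac{(x-l(t_0))^2}{2t}\to\infty$ --- which forces approximate minimizers in \eqref{takis20.1} to satisfy $x_\theta(\theta)\to0$ and upgrades (i) to (ii). Then (iii) is one line: drop $\ol\rho\geq0$ in \eqref{takis21.0} and apply Jensen to $\int_0^t\tfrac{\dot x^2}{2}$. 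Then (iv) follows by testing \eqref{takis21.0} with the straight path $x(s)=\tfrac{x}{t}s$, giving $\ol u(x,t)\leq\tfrac{x^2}{2t}+\int_0^{\hat t}\ol\rho(\tfrac{x}{t}s,s)\,ds$ with the last term tending to $0$. I recommend you adopt this order; your Part (i) sketch (classical DPP/verification for the viscosity solution, with a comparison principle in the quadratic-growth class) is consistent with the paper, but (ii)--(iv) should be rebuilt on the variational formula rather than on the explicit ODE asymptotics.
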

\begin{proof}
It is clear from its construction that  $\ol u$ satisfies the first equation of \eqref{takis2} everywhere in $\R\times (0,1]$. The dynamic programming principle \eqref{takis20.1}  is  then a consequence of the fact that $\ol u$ is a viscosity 
solution of the first equation of \eqref{takis2}. We leave the details for this classical claim to the reader.
\vs

Note that  $\pt_t \ol u+ \dfrac{1}{2}(\pt_x \ol u)^2=\ol \rho\geq 0$. It follows that  $\ol u\geq 0$ in $\R\times (0,1)$ since, by construction, $\ol u(\cdot,t)\geq 0$ for $t\in (0,1]$.

\vs

Next  observe that, if $x\in \R\setminus\{0\}$ and $t$ is sufficiently small, $(x,t) \notin \ol {\mathcal{O}}$ and $\ol u$ is given by \eqref{takis14}.  Then, as $t\to 0$, it is immediate that $t_0\to 0$ and $l(t_0)\to 0$, and, hence,  
\be\label{takis24}
 \underset{t\to 0} \lim \ol u(x,t)=\oo, 
\ee
since 
\[
\ol u(x,t)=\ol u(l(t_0),t_0) +\dfrac{(x-l(t_0))^2}{2(t-t_0)} \geq \dfrac{(x-l(t_0))^2}{2(t-t_0)} \ \ \text{and} \ \ \underset{t\to 0} \lim \dfrac{(x-l(t_0))^2}{2(t-t_0)}\geq \underset{t\to 0} \lim \dfrac{(x-l(t_0))^2}{2t}=\oo.
\]
%and, thus, 
%%\[ \underset{t\to t_0} \lim \dfrac{(x-t_0)^2}{2(t-t_0)}=\oo, \]
% 
%\be\label{takis24}
% \underset{t\to t_0} \lim \ol u(x,t)=\oo.
%\ee
\vs 
Using this information in \eqref{takis20.1}, we find that, if $x_\theta$ is an approximate minimizer, then $\underset{\theta \to 0}\lim x_\theta(\theta))=0$, which implies that $\ol u$ must actually satisfy \eqref{takis21.0}.
\vs

The lower bound in \eqref{takis22.0} follows now from Jensen's inequality. 
\vs

Going back to \eqref{takis21.0}, using, for each $(x,t) \in \R\times (0,1)$, the path $x(s)=\frac{x}{t} s$, denoting by $\hat t$ the time when the path intersects $\pt \mathcal O$, which may be $0$, and observing that $\hat t \to 0$ as $t\to 0$, we find
\be\label{takis171.1}
\ol u(x,t) \leq \dfrac{x^2}{2t} + \int_0^{\hat t}\ol \rho (\frac{x}{t} s, s) ds,
\ee
and  \eqref{takis23.0} follows from the observation that  
\be\label{takis172.1}
\underset{t\to 0} \lim\int_0^{\hat t}\ol \rho (\frac{x}{t} s, s) ds=0.
\ee
%\vs

%Finally, the last claim is standard fact  in the mean field game. We refer to \cite{???} for the proof.

\end{proof}
\vs

The result about the properties of $\ol u_\eta$ and $\ol \rho_\eta$ is stated next. Since it is proof is almost identical with one of the previous proposition, we omit it.

\begin{prop}\label{prop11}
(i)~For any $\theta>0$ and $(x,t)\in \R\times (\theta,1)$,
\be\label{takis20}
\ol u_\eta (x,t)=\inf \big\{\int_{t-\theta}^t \big[\dfrac{(\dot x(s))^2}{2} + \ol \rho(x(s),s)\big] ds + \ol u_\eta (x(\theta),\theta) \; : x\in H^1((0,1)), \ \ x(t)=x \big\}.
\ee

(ii)~For any $(x,t)\in \R\times (0,1)$,
\be\label{takis21}
\ol u_\eta (x,t)=\inf \big\{\int_0^t \big[\dfrac{(\dot x(s))^2}{2} + \ol \rho(x(s),s)\big] ds + \dfrac{x(0)^2}{2\eta} \; : x\in H^1((0,1)), \ x(t)=x,   \ x(0)=0 \big\}.
\ee

(iii)~For any $(x,t) \in \R\times (0,1)$,
\be\label{takis22}
\ol u_\eta (x,t)\geq \dfrac{x^2}{2(t+\eta)}.
\ee

(iv)~Locally uniformly in $x\in \R$, 
\be\label{takis23}
\underset{t\to 0} \lim \big[\ol u_\eta (x,t)-\dfrac{x^2}{2(t+\eta)}\big]=0.
\ee
\end{prop}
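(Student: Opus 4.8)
The plan is to follow the proof of Proposition~\ref{prop1.0} almost verbatim, the only structural change coming from the fact that the initial trace of $\ol u_\eta$ is the \emph{finite} penalty $x^2/2\eta$, rather than the singular condition that $\ol\rho(\cdot,0)$ be a Dirac mass. As built in Section~2, with the convention $\ol\rho_\eta\equiv 0$ on $\rt\setminus\ol{\mathcal O_\eta}$, the function $\ol u_\eta$ is a viscosity solution of $\pt_t\ol u_\eta+\tfrac12(\pt_x\ol u_\eta)^2=\ol\rho_\eta$ everywhere in $\R\times(0,1]$; the dynamic programming principle~\eqref{takis20} is then the classical consequence of this fact, exactly as for $\ol u$. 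Moreover $\ol u_\eta\ge 0$ in $\rt$, since $\pt_t\ol u_\eta+\tfrac12(\pt_x\ol u_\eta)^2=\ol\rho_\eta\ge 0$ while $\ol u_\eta(\cdot,t)\ge 0$ for every $t\in(0,1]$ by construction.

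For~(ii), unlike in Proposition~\ref{prop1.0}, I would not argue that an approximate minimizer in~\eqref{takis20} drives $x_\theta(\theta)$ to $0$ (which there rested on $\ol u(x,t)\to\oo$ as $t\to 0$ for $x\neq 0$). Instead I would simply let $\theta\to 0$ in~\eqref{takis20}: the ODE analysis of Section~2 (in particular $a_\eta(0)=1/\eta$ together with $k_\eta(0)=0$) shows that $\ol u_\eta$ is continuous on $\R\times[0,1)$ with $\ol u_\eta(x,0)=x^2/2\eta$, so along any fixed $H^1$ path the term $\ol u_\eta(x(\theta),\theta)$ converges to $x(0)^2/2\eta$ and the running integrals pass to the limit, which gives the representation~\eqref{takis21} (where the endpoint $x(0)$ is free and the penalty $x(0)^2/2\eta$ takes the place of the hard constraint $x(0)=0$ of~\eqref{takis21.0}).

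Parts~(iii) and~(iv) are then two matching bounds for the representation~\eqref{takis21}. For the lower bound I would discard the nonnegative term $\ol\rho_\eta$ in~\eqref{takis21}, use Jensen's inequality in the form $\int_0^t(\dot x(s))^2\,ds\ge (x-x(0))^2/t$, and minimize the resulting $\tfrac{(x-x(0))^2}{2t}+\tfrac{x(0)^2}{2\eta}$ over $x(0)\in\R$; since $\tfrac{(x-a)^2}{t}+\tfrac{a^2}{\eta}\ge\tfrac{x^2}{t+\eta}$, this yields $\ol u_\eta(x,t)\ge x^2/2(t+\eta)$, that is~\eqref{takis22} --- morally, the penalty $x(0)^2/2\eta$ acts as a free segment of ``length'' $\eta$. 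For the upper bound I would insert into~\eqref{takis21} the affine path $x(s)=x\,(s+\eta)/(t+\eta)$, whose running cost $\int_0^t(\dot x(s))^2/2\,ds+x(0)^2/2\eta$ equals $x^2/2(t+\eta)$ exactly, so that
\[
\ol u_\eta(x,t)\ \le\ \frac{x^2}{2(t+\eta)}+\int_0^t\ol\rho_\eta\Big(x\,\frac{s+\eta}{t+\eta},\,s\Big)\,ds .
\]
For each fixed $\eta>0$, the asymptotics~\eqref{takis210} bound $\ol\rho_\eta$ near $t=0$, so the last integral is at most a constant (depending on $\eta$) times $t$ and tends to $0$ as $t\to 0$, locally uniformly in $x$; together with~\eqref{takis22} this proves~\eqref{takis23}.

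The only delicate point --- and it is minor --- is the passage $\theta\to 0$ in~(ii), which uses that the characteristic/viscosity construction of Section~2 genuinely produces a function continuous up to $t=0$ with the prescribed trace $x^2/2\eta$. Granting that, the remainder is exactly the dynamic-programming bookkeeping of Proposition~\ref{prop1.0}, with $\ol\rho$ replaced by $\ol\rho_\eta$, the Dirac initial condition replaced by the quadratic penalty, and the role of ``$t$'' near the initial time played by ``$t+\eta$''.
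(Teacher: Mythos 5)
Your proposal is correct and is exactly the adaptation of the proof of Proposition~\ref{prop1.0} that the paper has in mind when it omits the proof of Proposition~\ref{prop11} as ``almost identical''; in particular you correctly identified the two places where the adaptation is not verbatim, namely that the passage from \eqref{takis20} to \eqref{takis21} rests on continuity of $\ol u_\eta$ up to $t=0$ rather than on blow-up forcing $x(\theta)\to 0$, and that the lower bound \eqref{takis22} needs the extra infimal-convolution step $\min_a\big[\tfrac{(x-a)^2}{2t}+\tfrac{a^2}{2\eta}\big]=\tfrac{x^2}{2(t+\eta)}$, with the matching upper bound coming from the affine path $x(s)=x(s+\eta)/(t+\eta)$. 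Your reading of \eqref{takis21} as a free-endpoint problem with penalty $x(0)^2/2\eta$ (the constraint ``$x(0)=0$'' and the appearance of $\ol\rho$ instead of $\ol\rho_\eta$ in the statement being evident typos) is the right one.
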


%We remark that an alternative way to prove Proposition~\ref{prop1.0} and an important tool in the proof of the convergence is the approximate problem
%\be\label{takis25}
%\bc 
%\pt_t \ol u_\eta + \dfrac{1}{2}(\pt_x \ol u_\eta)^2=\ol \rho_\eta \ \ \text{in} \ \ \R\times (0,1),\\[1.2mm]
%\pt_t \ol \rho_\eta + \pt_x(\pt_x \ol u_\eta \ol \rho_\eta)=0  \ \ \text{in} \ \ \R\times (0,1),\\[1.2mm]
%\ol \rho_\eta(\cdot, 1)=\delta \ \ \ \  \ol u_\eta(x,0)={x^2}/{2\eta},
%\ec
%\ee
%which admits a solution similar to \eqref{takis12}, \eqref{takis13} and \eqref{takis14}, the only difference being that $a_\eta(0)=1/\eta$ and $r_\eta(0)=r_{0,\eta}>0.$ 
%\vs
%
%It is then clear that 
%\[\ol u_\eta (x,t) = \inf \big \{  \int_0^t \big[\dfrac{(\dot x(s))^2}{2} + \ol \rho_\eta(x(s),s)\big] ds  + \dfrac{(x(0))^2}{2\eta} \; : x\in H^1((0,1)) \ \ x(t)=x\big\}.\]
%Then the fact that $\pt_t \ol u_\eta +\dfrac{1}{2}(\pt_x \ol u)^2\geq 0$ immediately immediately yields that  $\ol u_\eta (x,t) \geq \dfrac{x^2}{2(t+\eta)}$. 
%\vs
%
%The conclusions of Proposition~\ref{prop1} now follow from the observation that, as $\eta\to 0$ and uniformly in $\R\times (0,1)$,  $\ol \rho_\eta \to \ol \rho$ and $\ol u_\eta \to \ol u$, which can be seen easily  from the formulae defining $\ol u$ and $\ol \rho$. 
%
\vs

The next result  is the uniqueness of solutions to \eqref{takis2}. 

\vs

For this we need to specify the regularity of the solutions we are considering, which are consistent with the expect regularity of the  limit of $u_{\ep, \eta}$ and $\rho_{\ep, \eta}$. 
\vs

%{\bf CHECK definition}
%\newtheorem{mydef}{Definition}
\newtheorem*{definition}{Definition}%\label{def1}
\begin{definition}%\label{def2.1}
A pair $(u, \rho) \in C(\R\times (0,1])\times\big(C([0,1];{\mathcal P}_2)\cap L^1(\rt)$ is a solution to \eqref{takis2}, if $u$ is a viscosity solution of the Hamilton-Jacobi equation and $\rho$ is a distributional solution of the continuity equation and, in addition, 
\be\label{takis40.110}
\bs
& (i)~u(x,t)\geq x^2/2t, \ \ % \int_\R u(x,t) \ol \rho(x,t) dx\leq Ct^\alpha 
(ii)~\int_\R u(x,t) \ol \rho(x,t) dx \leq \int_\R \ol \rho(x,t) \dfrac{x^2}{2t} dx + \text{o}(1), \ \\ 
%& (iii)~\underset{t\to 0}\lim \int_\R \rho(x,t) \dfrac{x^2}{2t}dx=0  \ \  (iv)~u\in C^{0,\alpha}(\R\times (\theta,1]) \ \text{for all}\  \theta \in (0,1). 
& (iii)~u\in C^{0,\alpha}(\R\times (\theta,1]) \ \text{for all} \ \theta\in (0,1)  \ \ \text{and} \ \ (iv)~\ds\int_0^1 \ds\int_\R \rho (\pt_x u)^2 dx dt<\oo.
\end{split}
\ee
\end{definition} 
\vs 
%{\bf \underset{t\to 1}\lim \int_\R \rho(x,t)u(x,t)dx=\int_\R \rho(x,1)u(x,1) dx}\ \ \text{and} \  \  u\in C^{0,\alpha}(\R\times (\theta,1]).
%\end{split}
%\ee
%\end{definition} 
%{\bf the limit condition above is not ok}.
Before we state and prove the uniqueness result for the planning problem, we state and prove the following auxiliary fact.
\vs
\begin{prop}\label{prop31}
If $\pt_t \rho +\pt_x(\rho \alpha)=0$ in $\rt$, $\rho(\cdot,0)=\text{\boldmath $\delta$}$ and $\int_0^1\int_\R \rho \alpha^2 dxds<\oo$,
then
\[\underset{t \to 0} \lim \ds \int_\R \rho(x,t) \dfrac{x^2}{t} dx =0.\]
%
%For each $\ep$ and $\eta$ we have
%\be\label{takis90}
%\ds\int_\R \rho_{\ep,\eta}(x,t) \dfrac{x^2}{t} dx \leq \ds\int_0^t\ds\int_\R \rho_{\ep, \eta}(x,t) (\pt_x u_{\ep,\eta})^2(x,t) dx.
%\ee
\end{prop}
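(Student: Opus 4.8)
The plan is to work with the second moment $M(t) := \int_\R \rho(x,t)\, x^2\, dx$ and show not only that $M(t) \to 0$ (which would follow from $\rho(\cdot,0) = \text{\boldmath$\delta$}$ and continuity in $\mathcal P_2$) but the sharper fact that $M(t)/t \to 0$ as $t \to 0$. The key mechanism is the differential inequality already used in the proof of Lemma~\ref{lem10}: multiplying the continuity equation by $x^2$, integrating in $x$, and using Cauchy--Schwarz with $\beta = \rho \alpha$ gives
\[
\frac{d}{dt} M(t) = 2\int_\R \rho\, \alpha\, x\, dx \le 2\Big(\int_\R \rho\, x^2\, dx\Big)^{1/2}\Big(\int_\R \rho\, \alpha^2\, dx\Big)^{1/2} = 2\,M(t)^{1/2}\, g(t)^{1/2},
\]
where $g(t) := \int_\R \rho(x,t)\, \alpha(x,t)^2\, dx$, so that $g \in L^1(0,1)$ by hypothesis. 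Dividing by $M(t)^{1/2}$ yields $\frac{d}{dt} M(t)^{1/2} \le g(t)^{1/2}$, hence $M(t)^{1/2} \le \int_0^t g(s)^{1/2}\, ds$ (the initial value is $0$ since $\rho(\cdot,0)$ is a Dirac mass at $0$).

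From here I would conclude by Cauchy--Schwarz on the time integral: $\int_0^t g(s)^{1/2}\, ds \le t^{1/2}\big(\int_0^t g(s)\, ds\big)^{1/2}$, so that
\[
\frac{M(t)}{t} \le \int_0^t g(s)\, ds.
\]
Since $g \in L^1(0,1)$, the right-hand side tends to $0$ as $t \to 0$ by absolute continuity of the Lebesgue integral, which is exactly the claim $\lim_{t\to 0} \int_\R \rho(x,t)\, x^2/t\, dx = 0$.

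The main technical point to be careful about is justifying the differentiation under the integral sign and the manipulation $\frac{d}{dt}M^{1/2} \le g^{1/2}$ rigorously: $M$ need not be classically differentiable, so I would instead argue that $M^{1/2}$ is absolutely continuous on $(0,1)$ with the stated derivative bound a.e., which follows from the distributional form of the continuity equation tested against a smooth cutoff of $x^2$ (to handle the unboundedness of $x^2$ and the fact that $\rho$ is only $L^1$ in space), followed by a truncation/monotone-convergence argument. This is the standard second-moment estimate for continuity equations with $L^2(\rho\, dx\, dt)$ velocity fields and is where essentially all the care is needed; the rest is the elementary chain of Cauchy--Schwarz inequalities above. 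Note this proposition is exactly the ingredient needed to verify condition (ii) of the Definition for the candidate limit, and the argument is the natural abstraction of the concrete computation already carried out at the end of the proof of Lemma~\ref{lem10}.
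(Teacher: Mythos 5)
Your proposal is correct and follows essentially the same route as the paper: the second-moment identity from the continuity equation, Cauchy--Schwarz to get $\frac{d}{dt}M^{1/2}\le g^{1/2}$, a further Cauchy--Schwarz in time to obtain $M(t)/t\le\int_0^t g(s)\,ds$, and absolute continuity of the integral to conclude. Your added remarks on justifying the differentiation rigorously are a reasonable supplement to what the paper leaves implicit.
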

\begin{proof}
%As before we write $u$ and $\rho$ instead of  $u_{\ep,\eta}$ and $\rho_{\ep,\eta}$.
%\vs
Multiplying the continuity equation satisfied by $\rho$ by $x^2$ and integrating over $\R$ gives 
\vs

\[\begin{split} \dfrac{d}{dt} \ds \int_\R x^2 \rho dx & = 2 \ds \int_\R x  \rho \pt_x u  dx\\[1.2mm]
& \leq  2 \Big( \ds \int_\R x^2  \rho dx\Big)^{1/2} \Big( \ds \int_\R  \rho (\pt_x u)^2 dx\Big)^{1/2},
\end{split}\]
\vs
and thus
\[\dfrac{d}{dt} \Big( \ds \int_\R x^2 \rho dx\Big)^{1/2} \leq \Big( \ds \int_\R  \rho (\pt_x u)^2 dx\Big)^{1/2}.\]

\vs
It then follows, using $\ds\int x^2\rho(x,0) dx=0$, that 

\[\Big( \ds \int_\R x^2  \rho (x,t) dx\Big)^{1/2} \leq t^{1/2} \Big( \ds \int_0^t \ds\int_\R  \rho (\pt_x u)^2 dx\Big)^{1/2} \]

Squaring the last inequality we get

\[ \ds \int_\R \dfrac{x^2}{t}  \rho (x,t) dx \leq \ds \int_0^t \ds\int_\R  \rho (\pt_x u)^2 dx\,\]
\vs

and, since in view of the $L^2(\rt)-$ integrability of $\sqrt{\rho} \alpha$,  the claim  follows. 

\end{proof}

\vs
\begin{thm}\label{thm10}
Let $(\rho, u)$ be a solution of \eqref{takis2} satisfying \eqref{takis40.110}. 
% and {\bf need to specify regularity ??} such that $u(0,0)=\ol u(0,0)$ ({\bf we need something ??}). 
Then 
$\rho=\ol \rho$ and $u=\ol u$.
\end{thm}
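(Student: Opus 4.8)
The plan is to compare the given solution $(u,\rho)$ directly with the explicit solution $(\ol u,\ol\rho)$ constructed in Section~2 (which by Lemma~\ref{lem10} is also the unique minimizer of $\ol I$) via the Lasry--Lions monotonicity argument. Set $\Phi(t)=\int_\R(u-\ol u)(\rho-\ol\rho)\,dx$. Using the two Hamilton--Jacobi equations $\pt_t v+\tfrac12(\pt_x v)^2=\mu$ and the two continuity equations $\pt_t\mu+\pt_x(\pt_x v\,\mu)=0$ for $(v,\mu)\in\{(u,\rho),(\ol u,\ol\rho)\}$, together with one integration by parts in $x$, I will establish the identity
\[
\frac{d}{dt}\,\Phi(t)=\int_\R(\rho-\ol\rho)^2\,dx+\frac12\int_\R(\pt_x u-\pt_x\ol u)^2\,(\rho+\ol\rho)\,dx\ \geq\ 0 .
\]
Making this rigorous is a by now standard but delicate point in the theory of mean field games: since $u$ is only a viscosity solution and $\rho$ only an $L^2$ density, one uses $u$ and $\ol u$ as test functions in the weak formulations of the two continuity equations and pairs the two Hamilton--Jacobi equations against $\rho$ and $\ol\rho$; the regularity $u\in C^{0,\alpha}(\R\times(\theta,1])$ and the bound $\int_0^1\!\int_\R\rho(\pt_x u)^2<\oo$ from \eqref{takis40.110}, together with the explicit structure \eqref{takis12}--\eqref{takis14.0} of $\ol u$ and the smoothness of $\ol\rho$ inside $\ms O$, are precisely what is needed to justify the integration by parts and rule out a flux contribution at $|x|=\oo$.

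The next step is to show that the endpoint contributions of $\Phi$ vanish, so that integrating the identity over $(0,1)$ forces its nonnegative right-hand side to be identically zero. At $t=1$ one has $(\rho-\ol\rho)(\cdot,1)=\text{\boldmath $ \delta$}-\text{\boldmath $ \delta$}=0$, and since $u-\ol u$ is continuous up to $t=1$ with at most quadratic growth (lower bound \eqref{takis22.0}; upper bound from the dynamic programming principle \eqref{takis20.1}), while $\rho(\cdot,t)$ and $\ol\rho(\cdot,t)$ converge to $\text{\boldmath $ \delta$}$ in $\ms P_2$, the pairing $\Phi$ is continuous at $t=1$ and $\Phi(1^-)=0$. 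For $t\to 0$ I will prove $\liminf_{t\to0}\Phi(t)\geq0$, splitting $\Phi=\int(u-\ol u)\rho\,dx-\int(u-\ol u)\ol\rho\,dx$. In the second integral, \eqref{takis40.110}(ii) gives $0\leq\int u\,\ol\rho\leq\int\ol\rho\,\tfrac{x^2}{2t}\,dx+\text{o}(1)$ and Proposition~\ref{prop31} applied to $\ol\rho$ gives $\int\ol\rho\,\tfrac{x^2}{t}\,dx\to0$, so $\int u\,\ol\rho\to0$; together with $\int\ol u\,\ol\rho\to0$ (again Proposition~\ref{prop31} and the behaviour \eqref{takis210} of $\ol u$ near $t=0$), this integral tends to $0$. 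In the first integral, \eqref{takis40.110}(i) gives $u\geq\tfrac{x^2}{2t}$ while \eqref{takis22.0}--\eqref{takis23.0} give $\sup_x\big(\ol u(x,t)-\tfrac{x^2}{2t}\big)\to0$, whence $u-\ol u\geq-\big(\ol u-\tfrac{x^2}{2t}\big)$ and $\int(u-\ol u)\rho\geq-\sup_x\big(\ol u-\tfrac{x^2}{2t}\big)\to0$. Since $\Phi$ is nondecreasing with $\Phi(1^-)=0$ and $\liminf_{t\to0}\Phi\geq0$, we conclude $\Phi\equiv0$ on $(0,1)$, hence $\rho=\ol\rho$ a.e.\ and $\pt_x u=\pt_x\ol u$ a.e.\ on $\{\ol\rho>0\}$.

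Finally I will upgrade $\pt_x u=\pt_x\ol u$ on $\{\ol\rho>0\}=\intr\ms O$ to $u=\ol u$ in all of $\R\times(0,1]$. On each (connected) time slice of $\ms O$ the function $u-\ol u$ has zero $x$-derivative, hence $u=\ol u+c(t)$ there; inserting this into the Hamilton--Jacobi equation, which $u$ and $\ol u$ satisfy with the same right-hand side $\rho=\ol\rho$, gives $c'\equiv0$, so $c$ is a constant, and the normalisation $u(0,0)=\ol u(0,0)=0$ in \eqref{takis2} together with $\ol u(0,t)\to0$ force $c=0$; thus $u=\ol u$ on $\ol{\ms O}$. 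In $\R\times(0,1]\setminus\ol{\ms O}$ both $u$ and $\ol u$ are viscosity solutions of $\pt_t v+\tfrac12(\pt_x v)^2=0$ that coincide on $\pt\ms O$ (with the constraint $x(0)=0$ inherited from $\rho(\cdot,0)=\text{\boldmath $ \delta$}$), so the dynamic programming principle --- exactly as in the construction \eqref{takis14.0} of $\ol u$ --- gives $u=\ol u$ everywhere, completing the proof.

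I expect the main difficulty to be the rigorous justification of the monotonicity identity together with the vanishing of the boundary term as $t\to0$: the Dirac initial and terminal data, and the fact that the Dirac mass at $t=0$ arises only as a singular limit, force the integration-by-parts manipulations to be carried out with care, controlling the interaction at $t=0$ through the quantitative properties (i)--(iv) of the solution class and Proposition~\ref{prop31}, and verifying that no mass escapes to spatial infinity when pairing the at-most-quadratically growing $u-\ol u$ against the $\ms P_2$-valued curves $\rho$ and $\ol\rho$.
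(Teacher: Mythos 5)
Your proposal is correct and follows essentially the same route as the paper: the Lasry--Lions monotonicity identity for $\Phi(t)=\int_\R(u-\ol u)(\rho-\ol\rho)\,dx$, the vanishing of the endpoint pairings at $t=0$ and $t=1$ via the quantitative conditions \eqref{takis40.110} and Proposition~\ref{prop31}, recovery of $u=\ol u$ on $\ol{\ms O}$ up to a constant, and the dynamic programming principle outside $\ol{\ms O}$. The only caveat is that the constant $c$ should be fixed using $\lim_{t\to 0}\int_\R(u-\ol u)\ol\rho\,dx=0$ (which you already established when treating the boundary term at $t=0$, and which is what the paper uses) rather than the pointwise normalisation $u(0,0)=0$, since the solution class only requires $u\in C(\R\times(0,1])$ and $u$ need not extend continuously to $t=0$.
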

\begin{proof}
As it is common when studying systems with mean field games structure, the uniqueness is based on a  classical computation which gives, for any $\theta\in (0,1/2)$, the identity 
\be\label{takis35}
\begin{split}
 \int_\R &(\rho-\ol \rho)(x,1-\theta) (u-\ol u)(x,1-\theta) dx -  \int_\R (\rho-\ol \rho)(x,\theta) (u-\ol u)(x,\theta) dx \\[1.5mm]
& =\int_\theta^{1-\theta} \int_\R \Big [ (\rho -\ol \rho)^2 + \dfrac{1}{2} (\rho + \ol \rho) (\pt_x u -\pt_x \ol u)^2  \Big ] dx dt.
 \end{split}
 \ee
\vs
Assuming  we can show that
\be\label{takis36.1}
\underset{\theta\to 0} \lim  \int_\R (\rho-\ol \rho)(x,1-\theta) (u-\ol u)(x,1-\theta) dx= \underset{\theta\to 0}\lim \int_\R (\rho-\ol \rho)(x,\theta) (u-\ol u)(x,\theta) dx=0,
\ee
it follows that 
\[\int_0^{1} \int_\R \Big [ (\rho -\ol \rho)^2 + \dfrac{1}{2} (\rho + \ol \rho) (\pt_x u -\pt_x \ol u)^2  \Big ] dx dt=\underset{\theta \to 0}\lim \int_\theta^{1-\theta} \int_\R \Big [ (\rho -\ol \rho)^2 + \dfrac{1}{2} (\rho + \ol \rho) (\pt_x u -\pt_x \ol u)^2  \Big ] dx dt=0,\]
which in turn implies that 
\[ \rho=\ol \rho \ \ \text{almost everywhere in} \ \ \rt \ \ \text{and} \ \ \ \pt_x u =\pt_x \ol u \ \ \text{almost everywhere in} \ \  {\mathcal O}. \]
It then follows from the fact that $u$ and $\ol u$ solve the same equation in $\mathcal O$ that we also have that 
\[\pt_t u=\pt_t \ol u \ \ \text{almost everywhere in} \ \  {\mathcal O}\]
and, hence, in view of the continuity of $u$ and $\ol u$, that, for some constant $c\in \R$, 
\be\label{takis3000}
u-\ol u =c \ \ \text{on} \ \ \ol {\mathcal O}.
\ee

It follows from \eqref{takis40.110} and Proposition~\ref{prop31} that 
%\begin{equation}\label{takis1.111}
\[\underset{t\to 0} \lim \int_\R \ol \rho (x,t) u(x,t) dx= \underset{t\to 0}  \lim \int_\R \ol \rho (x,t)  \ol u(x,t) dx=0,\]
%\end{equation}
and, thus, 
\begin{equation}\label{takis1.111}
\underset{t\to 0} \lim \int_\R \ol \rho (x,t)[ u(x,t) -  \ol u(x,t)] dx=0.
\end{equation}
Since $\ol \rho$ is supported in $\ol {\mathcal O}$ and $\int_\R \ol \rho(x,t) dx=1$, it follows from \eqref{takis1.111} and \eqref{takis3000} that $c=0$, that is
\be\label{takis3000}
u=\ol u  \ \ \text{on} \ \ \ol {\mathcal O}.
\ee

To conclude we need to show that $u=\ol u$ on $R\times(0,1]$. This follows from the fact that both $u$ and $\ol u$ are viscosity solutions of the Hamilton-Jacobi equation of the system, and, hence, satisfy the same variational formula (see Proposition~\ref{prop1.0}). 

%The in view of the assumption that $u(0,0)=\ol u(0,0)$ we find  that 
%$u=\ol u   \ \text{in} \  \ol {\mathcal O}.$  We then get that $u=\ol u \ \ \text{in}  \ \rt$ 
%using the fact $u$ is  a viscosity solution in $\rt \setminus \ol{\mathcal O}$ 
%and, hence, satisfies the same variational formula as $\ol u$ in Proposition~\ref{prop1}. 
%%in view of the assumption $u(0,1)=\ol u(0,1)$
%%\[u=\ol u  \ \ \ \text{in} \ \  \rt.\]
%{\bf is the above enough?}
%%Since $u$ is a viscosity solution to  
%%\[\pt_t u +\dfrac{1}{2}(\pt_x u)^2=\ol \rho  \ \ \text{in} \ \ \R\times (0,1)\]
%%and $u(x,t)\geq x^2/2t$   and $\ol u(x,t)- x^2/2t=\text{o}(1)$ as $t\to 0$, we must also have $u=\ol u$ in $ \R\times (0,1) \setminus \ol {\mathcal O}$, since $u$ is constructed the same way as $\ol u$. ({\bf  ????})
%%\vs 
\vs
We know return to the $\theta\to 0$ limits in \eqref{takis36.1} and begin with the behavior near $t=0$. 
\vs
In view of the signs of $\int_\R\rho (x,\theta)u(x,\theta)dx$ and $\int_\R\ol \rho (x,\theta)\ol u(x,\theta)dx$, we only need to show that 
\[\underset{\theta\to 0} \lim  \int_\R \rho(x,\theta)\ol u(x,\theta) dx= \underset{\theta\to 0} \lim  \int_\R \ol \rho(x,\theta) u(x,\theta) dx=0.\]
\vs

The first limit follows from \eqref{takis171.1}, \eqref{takis172.1} and \eqref{takis40.110} (iii) 
%The second limit follows from  \eqref{takis171}, \eqref{takis172} and \eqref{takis40.1} (iii), 
while the second follows from \eqref{takis40.110} (ii).

\vs

We turn now to the behavior near $t=1$, and we show that 
\be\label{takis120.11}
\underset{\theta\to 0} \lim \int_\R [(\rho-\ol \rho)(x,1-\theta) \ol u(x,1-\theta)dx =0.
\ee
and
\be\label{takis120}
\underset{\theta\to 0}\lim \int_\R [(\rho-\ol \rho)(x,1-\theta) u(x,1-\theta)dx=0.
\ee

In view of the explicit forms of $\ol \rho$ and $\ol u$ which give immediately that 
\[\underset{\theta\to 0}\lim \ds\int_\R \ol \rho(x,1-\theta) \ol u(x,1-\theta)dx=\ol u(0,1), \]
to prove \eqref{takis120.11} we need to show that 
\[\underset{\theta\to 0}\lim \ds \int_\R \rho(x,1-\theta) \ol u(x,1-\theta)dx =\ol u(0,1).\]
\vs

The continuity of $\ol u$ gives that, for every $\kappa>0$, there exists $\theta_0>0$ and $R_0>0$ such that 
\[\underset{|x|\leq R, \theta\leq \theta_0}\sup |\ol u(x,1-\theta) -\ol u(0,1)| \leq \kappa,\] so to conclude we 
 % for $\eta_0$ and $R$ small and to conclude 
 we only need to show that, for every $R>0$,  
\be\label{takis173.1}
\underset{t \to 0}\lim\ds\int_{|x|\geq R} \rho (x,t)dx=0.
\ee

\vs

For this, after multiplying the continuity equation by $(|x|-R)_+^2$ and integrating by parts, 
we find 
 %and the fact that $g(t)=\ds \int_t^1\ds\int_\R \Big(\rho ( \pt_x u)^2\Big)(x,s) dx ds \to 0$ as $t \to 1
%\rho_{\ep,\eta}(x,t)x^2 dx$ as $R\to \infty$,  we find, for $R>0$ and a uniform constant $C>0$, 
%\begin{equation}\label{takis56}
\[\bs
\dfrac{d}{dt} \int_\R & \rho (x,t) (|x|-R)_+^2dx = -2\ds \int_{|x|\geq R} (|x|-R)_+ \rho(x,t)\pt_x u  (x,t)  dx\\[1.5mm]
& \geq  - C \Big(\ds \int_R \rho(x,t)(|x|-R)_+^2dx\Big)^{1/2} \Big(\ds \int_\R \rho(x,t) (\pt_x u)^2(x,t) dx\Big)^{1/2},
%\\[1.5mm]
%& \geq C   (\ds \int\int_{|x|\geq R} \rho_{\ep,\eta}(x,t) (\pt_x u_{\ep, \eta})^2(x,t) dx)^{1/2}.
\end{split}
\]
and, hence, 
\[ \dfrac{d}{dt} \Big(\int_\R  \rho (x,t) (|x|-R)_+^2dx\Big)^{1/2} \geq -C\Big(\ds \int_\R \rho(x,t) (\pt_x u)^2(x,t) dx\Big)^{1/2}. \]
%\end{equation}
Integrating this last inequality from $1-\theta$ to $1$ and using that, in view of the fact that $\rho(\cdot,1)=\text{\boldmath $ \delta$}$, 

$\ds\int_\R \rho(x,1)(|x|-R)_+^2 dx=0$, we find  
\[ \int_\R  \rho (x,1-\theta) [(|x|-R)_+ ]^2dx \leq \theta \ds \int_0^1\ds \int_\R  \rho(x,t) (\pt_x u)^2(x,t) dx,\]
and thus, using that  $ \ds \int_0^1\ds \int_\R  \rho(x,t) (\pt_x u)^2(x,t) dx<\oo$, we get \eqref{takis173.1}.
\vs

We turn now to the proof of \eqref{takis120} and show   that 
\be\label{takis174}
\underset{\eta\to 0} \lim \ds \int_\R \rho(x,1-\eta)u(x,1-\eta) dx= u(0,1), 
\ee
and 
\be\label{takis175}
\underset{\eta\to 0} \lim \ds \int_\R \ol \rho(x,1-\eta)u(x,1-\eta) dx= u(0,1).
\ee

For \eqref{takis174}, we use again the identity
\vskip.075in
\[\dfrac{d}{dt} \ds \int_\R \rho(x,t) u(x,t) dx= \ds \int_\R \big[\rho^2(x,t) + \dfrac{1}{2} \rho(x,t) (\pt_x u)^2(x,t)\big] dx,\]
\vskip.075in
and the continuity of $u$ near $t=1$ to get 
\[ \bs
u(0,1) -  \ds \int_\R \rho(x,1-\theta) u(x,1-\theta) dx&= \ds \int_\R \rho(x,1) u(x,1) dx - \ds \int_\R \rho(x,1-\theta) u(x,1-\theta) dx\\[1.2mm]
&= \int_{1-\theta}^1\ds \int_\R \big[\rho^2(x,t) + \dfrac{1}{2} \rho(x,t) (\pt_x u)^2(x,t) \big]dxdt,
\end{split}
\]
and we conclude from the fact that
\[\underset{\theta\to 0} \lim \int_{1-\theta}^1\ds \int_\R \big[\rho^2(x,t) + \dfrac{1}{2} \rho(x,t) (\pt_x u)^2(x,t) \big]dxdt=0.\]

For the proof of \eqref{takis175},  using the form of $\ol \rho$, we  find 
\[\bs
\ds \int_R \ol \rho(x,t) u(x,t) dx - u(0,1)&= [\ds \int_R \ol \rho(x,t) [u(x,t) -u(0,1)] dx\\[1.2mm] 
& =\ds \int_R (1-|x|^2)_+ [u(l(t) x,t) -u(0,1)] dx. 
\end{split} \]
It then follows from  the continuity of $u$ that 
\[\underset{t \to 1} \lim \ds \int_R (1-|x|^2)_+ (u(l(t) x,t) -u(0,1)) dx=0.\]

%\be\label{takis110}
%\underset{\eta\to 0}\lim \int_\R (\rho-\ol\rho)(x,1-\eta) u(x,1-\eta)=0. \ee
%\vs
%This  follows from the assumptions for $\rho$ and $u$ and the form of $\rho$. For the second limit, we conclude using the continuity of $\ol u$, and  the facts that, since $\rho(\cdot,1)=\delta$, $\int_{|x|\geq \theta} \rho(x,t)dx =0$,  that,  for $|x|\geq \theta$ and $1-t$ small, there exists $C>0$ such that
%\[\ol u(x,t) \leq C(1+x^{2}).\]
%
%To finish the proof of the claim, in view of the sign of $\int_\R \rho(x,1-\eta) \ol u(x, 1-\eta) dx$, it is enough to show
%that
%\[ \lim_\R \ol \rho(x,1-\eta) \ol u(x,1-\eta)dx=0,\]
%which is an immediate from the explicit formulae of $\ol \rho$ and $\ol u$.

\end{proof}

\section{uniform estimates for the solution to  the approximate planning problem}

We work with solutions to \eqref{takis1} and prove  here a number of estimates for the solution $(u_{\ep,\eta}, \rho_{\ep,\eta})$ of \eqref{takis1} , which are independent of $\eta$ and $\epsilon$. These bounds yield the necessary compactness to pass in the $\ep,\eta \to 0$ limit to show convergence to solutions of \eqref{takis2} in $\rt$.
\vs 

%We  also investigate the uniform in  $\eta$ and $\epsilon$ behavior of the $u_{\ep,\eta}$'s and $\rho_{\ep,\eta}$'s  near $t=0$ and $t=1$ and obtain estimates  which pass to the $\ep,\eta \to 0$ limit  and, hence, are satisfied by any limit of the  $(u_{\ep,\eta}, \rho_{\ep,\eta})$. With them  in hand, we can then show that \eqref{takis2} has a unique solution, thus establishing the convergence of the full family. 
%\vs

For definiteness, we recall the basic existence and uniqueness result of solutions of the system \eqref{takis1}.
% and refer to 
%\cite{???} {\bf reference?} for its proof. 

\begin{prop}\label{prop0}
For each $\ep$ and $\eta$, \eqref{takis1} has a unique solution solution $(u_{\ep,\eta}, \rho_{\ep,\eta}) \in C(\R\times [0,1]) \times C([0,1]; \mathcal P)$
which is classical in $\rt$. 
\end{prop}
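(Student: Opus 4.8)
The plan is to obtain existence by combining the Hopf--Cole transformation, which linearizes the Hamilton--Jacobi equation, with a Schauder fixed-point argument on a regularized system, and to obtain uniqueness from the classical Lasry--Lions monotonicity identity. For existence I would first replace the terminal datum $\text{\boldmath$\delta$}$ by $\rho_0^n\in C_c^\infty(\R)$ with $\rho_0^n\geq 0$, $\int_\R\rho_0^n=1$, $\rho_0^n\rightharpoonup\text{\boldmath$\delta$}$ and uniformly bounded second moments, and solve the corresponding regularized system. Given a candidate $\rho$ in a convex, weakly compact set $\mathcal K\subset C([0,1];\mathcal P)$ of probability measures with uniformly controlled second moments, the substitution $\phi=\exp(-u/2\ep)$ turns the Hamilton--Jacobi equation into the linear parabolic problem $\pt_t\phi-\ep\pt_x^2\phi+\tfrac{1}{2\ep}\rho\phi=0$, $\phi(\cdot,0)=\exp(-|x|^2/4\ep\eta)$, which by the maximum principle has a unique solution with $0<\phi\le 1$, hence a unique $u=-2\ep\ln\phi\ge 0$; then $\pt_x u$ is a locally smooth drift and the linear Fokker--Planck problem $\pt_t\tilde\rho+\ep\pt_x^2\tilde\rho+\pt_x(\pt_x u\,\tilde\rho)=0$, $\tilde\rho(\cdot,1)=\rho_0^n$, read backward in time, has a unique solution that stays a probability measure and, for smooth data, is smooth and bounded on $[0,1]$. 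The map $\rho\mapsto\tilde\rho$ is continuous and compact and sends $\mathcal K$ into itself, so Schauder yields a fixed point $(u^n,\rho^n)$, which is classical in $\rt$ by parabolic bootstrapping.

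Next I would derive estimates independent of $n$ from the mean-field-game structure: multiplying the continuity equation by $u^n$ and integrating over $\rt$, and using the Hamilton--Jacobi equation (the $\ep$-terms cancelling after integration by parts), produces the energy identity controlling $\int_0^1\int_\R[(\rho^n)^2+\rho^n(\pt_x u^n)^2]\,dx\,dt$ together with $\int_\R\rho^n(x,0)\,x^2/2\eta\,dx$; comparison gives $0\le u^n\le C(1+x^2)$, Hopf--Cole gives Gaussian spatial tails and H\"older control of $u^n$ in time away from $t=1$, and multiplying the continuity equation by $x^2$ gives a bound on $\int_\R x^2\rho^n(x,t)\,dx$ vanishing as $t\to1$. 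These estimates let me pass to the limit: $u^n\to u$ locally uniformly in $\rt$, $\rho^n\rightharpoonup\rho$ with $\pt_x u^n\,\rho^n\to\pt_x u\,\rho$, the limit being a viscosity solution of the Hamilton--Jacobi equation and a distributional solution of the continuity equation, with $u(\cdot,0)=|x|^2/2\eta$ by stability and, thanks to $\rho_0^n\rightharpoonup\text{\boldmath$\delta$}$ and the moment bounds, $\rho\in C([0,1];\mathcal P)$ with $\rho(\cdot,1)=\text{\boldmath$\delta$}$. Once $\rho$ is known to be a bounded smooth function on compact subsets of $\rt$, parabolic bootstrapping upgrades $(u,\rho)$ to a classical solution there. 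Existence could alternatively be obtained from the direct method for the strictly convex problem \eqref{takis500}, whose Euler--Lagrange system is \eqref{takis1}.

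For uniqueness I would run the standard computation: if $(u_1,\rho_1)$ and $(u_2,\rho_2)$ both solve \eqref{takis1}, then, using the two equations,
\[\frac{d}{dt}\int_\R(\rho_1-\rho_2)(u_1-u_2)\,dx=\int_\R\Big[(\rho_1-\rho_2)^2+\tfrac12(\rho_1+\rho_2)(\pt_x u_1-\pt_x u_2)^2\Big]dx,\]
the decisive point being that the two $\ep\pt_x^2$ contributions cancel after integrating by parts twice. Since $u_1=u_2$ at $t=0$ and $\rho_1=\rho_2$ at $t=1$, the boundary terms vanish (the spatial decay bounds justify all the integrations by parts), so the right-hand side integrates to $0$; being nonnegative, it vanishes, forcing $\rho_1=\rho_2$ a.e. and $\pt_x u_1=\pt_x u_2$ on $\{\rho_1+\rho_2>0\}$, after which $u_1$ and $u_2$ solve the same linear Hopf--Cole problem and coincide.

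The step I expect to be the main obstacle is the passage to the limit $n\to\infty$ at the singular terminal time $t=1$: extracting the $n$-uniform $L^2$ and second-moment estimates from the variational/mean-field-game structure and using them to identify the trace $\rho(\cdot,1)=\text{\boldmath$\delta$}$, while simultaneously keeping enough decay at spatial infinity on the unbounded domain $\R$ — both in the a priori estimates and in the uniqueness identity — for every integration by parts to be legitimate.
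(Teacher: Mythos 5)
Your proposal is correct in outline but follows a genuinely different route from the paper. The paper does not actually prove Proposition \ref{prop0}: it is stated as a recollection of known facts, and the justification offered immediately afterwards is variational --- the functional $I_{\ep,\eta}$ in \eqref{takis61} under the linear constraint \eqref{takis62} is strictly convex, hence admits a unique minimizer $(\rho_{\ep,\eta},\beta_{\ep,\eta})$, and it is quoted from \cite{college de france} that the associated optimality system, with drift $\pt_x u_{\ep,\eta}=\beta_{\ep,\eta}/\rho_{\ep,\eta}$, is exactly \eqref{takis1}; you yourself mention this alternative in passing via \eqref{takis500}. You instead build the solution by PDE means (mollified terminal datum, Hopf--Cole plus Schauder for the regularized system, $n$-uniform energy and moment bounds, passage to the limit) and prove uniqueness by the Lasry--Lions monotonicity identity --- the same identity, with the same cancellation of the $\ep$-terms after two integrations by parts, that the paper deploys later for the first-order system (Theorem \ref{thm10}) and for the convergence proof (Theorem \ref{thm1}). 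What each approach buys: the variational route gives existence and uniqueness cheaply and produces, as a by-product, precisely the bounds on $\int_0^1\int_\R \rho_{\ep,\eta}^2\,dx\,dt$ and $\int_0^1\int_\R \rho_{\ep,\eta}(\pt_x u_{\ep,\eta})^2\,dx\,dt$ used throughout Section~3, at the price of quoting the equivalence between minimizers of \eqref{takis63} and solutions of \eqref{takis1}; your route is self-contained, delivers interior classical regularity directly, and treats the singular terminal datum explicitly, but the two points you honestly flag --- the continuity of $t\mapsto\int_\R(\rho_1-\rho_2)(u_1-u_2)\,dx$ up to the singular time $t=1$ so that the boundary term in the uniqueness identity really vanishes, and the uniform-in-$n$ moment estimates needed to identify the trace of $\rho$ at $t=1$ --- are exactly where the real work lies; note also that in your fixed-point step the zero-order term $\rho\phi$ in the Hopf--Cole equation requires $\rho$ to be a function (or a Feynman--Kac/duality interpretation), so the set $\mathcal K$ should consist of densities with quantitative bounds rather than general elements of $C([0,1];\mathcal P)$.
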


%In what follows to simplify the presentation we write $\ep$ instead of $\ep/2$ in \eqref{takis1}, that is, we consider the 
%system
%\be\label{takis60}
%\bc
%\pt_t u_{\ep, \eta} -{\ep} \pt^2_{x}u_{\ep, \eta} + \dfrac{1}{2}(\partial_x u_{\ep, \eta})^2 = \rho_{\ep, \eta}  \; \; \text{in} \; \; \R\times (0,1),\\[2mm]
%\pt_t \rho_{\ep, \eta} + \ep \pt^2_{x} \rho_{\ep, \eta}+\pt_x(\pt_x u_{\ep, \eta} \rho_{\ep, \eta})=0 \;\; \text{in} \;\; \R\times (0,1),\\[2mm]
%u_{\ep, \eta}(x,0)=|x|^2/2 \eta  \  \  \text{and} \ \  \rho_{\ep, \eta}(\cdot,1)=\text{\boldmath $ \delta$},
%\ec
%\ee
%

%In the proofs of all bounds in this section, we drop the dependence of the solution of \eqref{takis60} on $\ep$ and $\eta$ and simply write $u$ and $\rho$,
%
% \vs
% 
We turn now to  the variational characterization of the solution as the minimizer of the functional

\be\label{takis61} 
I_{\ep,\ep}(\rho,\beta)=\int_\R \rho(x,0) \dfrac{x^2}{2\eta} dx +\int_0^1\int_\R \big[\rho^2(x,t) +\dfrac{1}{2} \dfrac{\beta^2 (x,t)}{\rho(x,t)}\big] dx dt,
\ee

where $\rho$ and $\beta$ are such that 
\be\label{takis62}
\pt_t\rho + \pt_x \beta +\ep \pt^2_x \rho=0 \ \ \text{in} \ \ \R\times(0,1), \ \ \rho(\cdot,1)=\text{\boldmath $\delta$} . 
%\ \ \text{and} \ \  \int_0^1\int_\R \rho \beta^2 dx dt<\oo.
\ee
\vs

Since both $I_{\ep,\ep}$  and the constraint are  strictly convex, there exists a unique minimizer $(\rho_{\ep,\eta}, \beta_{\ep,\eta})$ of 

\be\label{takis63}
I_{\ep,\eta}=\inf\big \{ I_{\ep,\eta}(\rho, \beta): \rho, \beta \ \text{satisfying} \ \eqref{takis62} \big\}.
\ee
\vs

It also well known (see \cite{college de france})  that $(u_{\ep,\eta}, \rho_{\ep,\eta})$ with $u_{\ep,\eta}=\beta_{\ep,\eta}/\rho_{\ep,\eta} $ is the solution of \eqref{takis1}. 
%is the unique
%minimizer of 
%\be\label{takis63}
%I_{\ep,\eta}=\inf\big \{ I_{\ep,\eta}(\rho, \beta): \rho, \beta \ \text{satisfying} \ \eqref{takis62} \big\}
%\ee
\vs

Finally, we remark that the minimization problem can also be written as
\be\label{takis63.1}
\begin{split}
I_{\ep,\eta}=\inf \big \{ & \int_\R \rho(x,0) \dfrac{x^2}{2\eta} dx +\int_0^1\int_\R \big[\rho^2(x,t) +\dfrac{1}{2} \rho \alpha^2 \big] dx dt : \\[1.2mm]
& \pt_t\rho + \pt_x (\rho \alpha)  +\ep \pt^2_x \rho=0 \ \ \text{in} \ \ \R\times(0,1), \ \ \rho(\cdot,1)=\text{\boldmath $\delta$} \big\}.
\end{split}
\ee

It is clear that, if we could prove that, as $(\ep, \eta)\to 0$, $I_{\ep,\eta}\to \ol I$, where
\be\label{takis15.1}
\ol I=  \min \big\{\int_0^1\int_\R \big[ \rho^2 + \dfrac{1}{2} {\alpha^2 \rho} \big] dx dt \;: \pt_t \rho + \pt_x (\alpha \rho)=0 \ \ \rho(\cdot,0)=\rho(\cdot,1)=\text{\boldmath $\delta$} \big\},
\ee 
we would have  that the minimizer $(u_{\ep,\eta}, \rho_{\ep,\eta})$ converges to the unique minimizer of $\ol I$.
\vs

Here, we are not able to provide a direct proof of the convergence of the functional, which, of course, follows after we have established the convergence of the solutions.
\vs
We begin with a result about a uniform upper bound on $I_{\ep, \eta}$ as well as the easy part of what would have been a convergence result.

\begin{prop}\label{prop10}
For every $\alpha \in (0,1)$ there exists $C>0$, which is   independent of $\ep$ and $\eta$,  such that, 
%for every $\alpha \in (0,1)$, 
if $\ep\leq C\eta^\alpha$, 
then 
\be\label{takis63.1} (i)~\sup \big \{ I_{\ep,\eta}: {\ep\leq \eta^\alpha}\big\}\leq C 
 \ \ \text{and} \ \ 
(ii)~ \underset{(\ep,\eta)\to (0,0) \ \ep\leq C\eta^\alpha} \liminf I_{\ep,\eta} \geq \ol I.\ee
\end{prop}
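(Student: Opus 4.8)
The plan is to prove the two halves of Proposition~\ref{prop10} separately, exploiting the two variational formulas \eqref{takis15.1}, \eqref{takis15.14} for $\ol I_\eta$ together with the explicit structure of the Gaussian-penalized planning problem \eqref{takis200.0}.

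\emph{The upper bound (i).} The idea is to use the minimizer $(\ol\rho_\eta,\ol u_\eta)$ of $\ol I_\eta$ — equivalently the pair $(\ol\rho_\eta,\ol\beta_\eta)$ with $\ol\beta_\eta=\sqrt{\ol\rho_\eta}\,\pt_x\ol u_\eta$ — as a competitor for the viscous problem $I_{\ep,\eta}$, after correcting for the extra term $\ep\pt^2_x\rho$ in the constraint \eqref{takis62}. Concretely, I would set $\rho=\ol\rho_\eta$ and choose $\beta$ so that $\pt_x\beta=-\pt_t\ol\rho_\eta-\ep\pt^2_x\ol\rho_\eta=\pt_x\ol\beta_\eta-\ep\pt^2_x\ol\rho_\eta$, i.e. $\beta=\ol\beta_\eta-\ep\pt_x\ol\rho_\eta$. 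This pair satisfies the constraint and the terminal condition. Then one estimates
\[
I_{\ep,\eta}\le \int_\R \ol\rho_\eta(x,0)\tfrac{x^2}{2\eta}\,dx+\int_0^1\!\!\int_\R \Big[\ol\rho_\eta^2+\tfrac12\tfrac{(\ol\beta_\eta-\ep\pt_x\ol\rho_\eta)^2}{\ol\rho_\eta}\Big]\,dx\,dt,
\]
and expanding the square the cross term and the quadratic error term must be controlled. By the explicit formula \eqref{takis11}, $\ol\rho_\eta(x,t)=r_\eta(t)(1-(x/l_\eta(t))^2)_+$ is a compactly supported parabola, so $\pt_x\ol\rho_\eta/\sqrt{\ol\rho_\eta}$ and $(\pt_x\ol\rho_\eta)^2/\ol\rho_\eta$ are integrable away from $t=0,1$ with explicit $r_\eta(t),l_\eta(t)$ dependence; near $t=0$ and $t=1$ the asymptotics \eqref{takis210}, \eqref{211} give $r_\eta\sim (4(t+\eta))^{-2/3}$, $l_\eta\sim (t+\eta)^{2/3}$, and one checks the resulting time-integral of $\ep^2\int (\pt_x\ol\rho_\eta)^2/\ol\rho_\eta\,dx$ is bounded by $C\ep^2\eta^{-\gamma}$ for some explicit exponent $\gamma>0$; choosing $\alpha$ with $2\alpha>\gamma$ (which is allowed since $\alpha\in(0,1)$ is at our disposal and $\gamma$ is a fixed small number coming from the $t=0$ singularity) makes this go to zero, and $\ep\le C\eta^\alpha$ then yields the uniform bound. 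The term $\int\ol\rho_\eta(x,0)x^2/2\eta\,dx$ is $O(1)$ by \eqref{takis210}: the support has width $l_\eta(0)\sim\eta^{2/3}$ and height $r_\eta(0)\sim\eta^{-2/3}$, so $\int\ol\rho_\eta(x,0)x^2\,dx\sim \eta^{4/3}$, giving $O(\eta^{1/3})$. Finally $\int_0^1\!\int\ol\rho_\eta^2$ is uniformly bounded since $\ol\rho_\eta\to\ol\rho$ in $L^1$ and, by \eqref{takis210}–\eqref{211}, $\int\ol\rho_\eta(\cdot,t)^2\,dx\sim r_\eta(t)^2 l_\eta(t)\sim (t+\eta)^{-2/3}$ near $t=0$, which is integrable in $t$ uniformly in $\eta$.

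\emph{The liminf bound (ii).} This is the easy direction of a $\Gamma$-liminf: given a sequence $(\ep_n,\eta_n)\to 0$ with $\ep_n\le C\eta_n^\alpha$, consider the minimizers $(\rho_n,\beta_n)=(\rho_{\ep_n,\eta_n},\beta_{\ep_n,\eta_n})$. By part (i) the energies $I_{\ep_n,\eta_n}$ are bounded, so $\rho_n$ is bounded in $L^2(\rt)$ and $\beta_n/\sqrt{\rho_n}$ in $L^2(\rt)$, hence $\beta_n$ is bounded in $L^{4/3}(\rt)$ (the factorization $\beta=\tfrac{\beta}{\sqrt\rho}\sqrt\rho$ with $\sqrt\rho\in L^4$, exactly as remarked after \eqref{takis240}); also $\int\rho_n(x,0)x^2/2\eta_n\,dx$ is bounded. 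Pass to weakly convergent subsequences $\rho_n\rightharpoonup\rho$ in $L^2$, $\beta_n\rightharpoonup\beta$ in $L^{4/3}$. The constraint $\pt_t\rho_n+\pt_x\beta_n+\ep_n\pt^2_x\rho_n=0$ passes to the limit distributionally, since $\ep_n\pt^2_x\rho_n\to 0$ in $\mathcal D'$ (as $\ep_n\rho_n\to 0$ in $L^2$); this gives $\pt_t\rho+\pt_x\beta=0$. The boundedness of $\int\rho_n(x,0)x^2\,dx\le 2\eta_n\cdot(\text{const})\to 0$ forces $\rho(\cdot,0)=\text{\boldmath$\delta$}$; the terminal condition $\rho_n(\cdot,1)=\text{\boldmath$\delta$}$ is preserved (using the uniform second-moment/tightness estimates that make $\rho_n\in C([0,1];\mathcal P_2)$ — these follow, as in Lemma~\ref{lem10} and Proposition~\ref{prop31}, from $\int\rho\alpha^2<\oo$). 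Hence $(\rho,\beta)$ is admissible for $\ol I$. Then by weak lower semicontinuity of the jointly convex functional $(\rho,\beta)\mapsto\int\tfrac12[\rho^2+\beta^2/\rho]$ on $L^2\times L^{4/3}$ (and dropping the nonnegative penalization term),
\[
\liminf_n I_{\ep_n,\eta_n}\ \ge\ \liminf_n \int_0^1\!\!\int_\R\tfrac12\Big[\rho_n^2+\tfrac{\beta_n^2}{\rho_n}\Big]dx\,dt\ \ge\ \int_0^1\!\!\int_\R\tfrac12\Big[\rho^2+\tfrac{\beta^2}{\rho}\Big]dx\,dt\ \ge\ \ol I.
\]

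\emph{Main obstacle.} The delicate point is the error analysis near $t=0$ in part (i): the penalization $x^2/2\eta$ and the blow-up $a_\eta(0)=1/\eta$, $r_\eta\sim(t+\eta)^{-2/3}$ mean that the correction $\ep\pt_x\ol\rho_\eta$ interacts with the singular layer, and one must verify that the exponent $\gamma$ in the bound $\ep^2\eta^{-\gamma}$ is genuinely less than $2$ so that it can be absorbed by some admissible $\alpha\in(0,1)$; this requires carefully tracking the $\eta$-dependence of $\int_0^{O(\eta)}\int (\pt_x\ol\rho_\eta)^2/\ol\rho_\eta\,dx\,dt$ using \eqref{takis210} rather than just the crude bounds. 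A secondary subtlety in (ii) is justifying $\rho(\cdot,1)=\text{\boldmath$\delta$}$ in the weak limit, which needs the uniform tightness at $t=1$ coming from the finiteness of $\int\rho_n(\pt_x u_n)^2$, exactly the estimate used in the uniqueness proof (the $(|x|-R)_+^2$ computation in Theorem~\ref{thm10}).
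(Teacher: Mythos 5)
Your part (ii) is essentially the paper's argument: boundedness of the energies from (i), weak compactness of $\rho_{\ep,\eta}$ in $L^2$ and of $\beta_{\ep,\eta}$ (the paper uses weak $L^1$ for $\beta_{\ep,\eta}$ and $\beta_{\ep,\eta}^2/\rho_{\ep,\eta}$, you use $L^{4/3}$), weak lower semicontinuity of the convex functional, and passage to the limit in the constraint using $\ep_n\pt_x^2\rho_n\to 0$ in $\mathcal D'$; your extra care with the endpoint conditions is welcome but not a departure.

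Part (i), however, has a genuine gap. Your competitor is $\rho=\ol\rho_\eta$ with $\beta=\ol\beta_\eta-\ep\pt_x\ol\rho_\eta$, and the error you must control is $\ep^2\int_0^1\int_\R(\pt_x\ol\rho_\eta)^2/\ol\rho_\eta\,dx\,dt$. This quantity is not ``bounded by $C\ep^2\eta^{-\gamma}$''; it is infinite for every fixed $t$ and $\eta$. Indeed, with $\ol\rho_\eta=r_\eta(t)\bigl(1-(x/l_\eta(t))^2\bigr)_+$ one has $\pt_x\ol\rho_\eta=-2r_\eta x/l_\eta^2$, which does not vanish at the edge of the support, so
\[
\frac{(\pt_x\ol\rho_\eta)^2}{\ol\rho_\eta}=\frac{4r_\eta x^2}{l_\eta^4\bigl(1-(x/l_\eta)^2\bigr)}\sim\frac{2r_\eta}{l_\eta\,(l_\eta-|x|)}\quad\text{as }|x|\uparrow l_\eta,
\]
and $\int^{l_\eta}\frac{dx}{l_\eta-x}$ diverges. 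Equivalently: $\pt_x^2\ol\rho_\eta$ carries a Dirac mass on the free boundary $\pt\mathcal O_\eta$ (the truncated parabola is Lipschitz but not $C^1$ there), so any $\beta$ satisfying $\pt_x\beta=-\pt_t\ol\rho_\eta-\ep\pt_x^2\ol\rho_\eta$ must take the nonzero value $\pm 2\ep r_\eta/l_\eta$ at points where $\ol\rho_\eta=0$, which already forces $\beta^2/\rho=+\infty$ and violates the admissibility constraint $\beta^2/\rho=0$ a.e.\ on $\{\rho=0\}$. The true obstacle is therefore the edge of the support at every time, not (as you suggest) only the layer near $t=0$. The paper sidesteps this entirely by building the competitor out of strictly positive Gaussians: $\rho(x,t)=p(x,b(t))$ with $p$ the heat kernel, $b(t)=\ep(1-t)+(1-t)^\theta$ on $(1/2,1)$ and $b=\eta+c_0t^\theta$ on $(0,1/2)$, glued continuously at $t=1/2$. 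For such $\rho$ the exact drift is linear, $a(x,t)=\frac{b'+\ep}{2b}x$, all the integrals are explicitly computable, and the only term that is not unconditionally bounded is $\int_0^{1/2}\frac{\ep^2}{\eta+c_0t^\theta}\,dt\lesssim \ep^2\eta^{1/\theta-1}$, which is where the hypothesis $\ep\leq C\eta^\alpha$ with $\alpha=\frac{\theta-1}{\theta}$ enters. If you want to keep a competitor based on $\ol\rho_\eta$ you would have to mollify it (e.g.\ convolve with a heat kernel at scale depending on $\ep$) and redo the error analysis, which is essentially reconstructing the paper's Gaussian ansatz.
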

\begin{proof}
To prove  \eqref{takis63.1}(i),  we construct an admissible pair $(\rho_{\ep,\eta}, \beta_{\ep,\eta})$
such that, for some uniform $C>0$, 
 \[ I_{\ep,\eta}(\rho_{\ep,\eta}, \beta_{\ep,\eta})\leq C.\] 
 
 \vs
 
% To simplify the presentation, in the construction that follows we omit the explicit dependence on $\ep$ and $\eta$ and 
Let  $p$ the fundamental solution of the heat equation  in $\rt$, that is, $p(x,t)=\exp{(-x^2/4t)}/\sqrt{2t}$.
 \vs
 
 In $\R\times (1/2,1)$, we take $\rho(x,t)=p(x,b(t))$ with $b$ a smooth  decreasing function and $b(1)=0$.
 It is  immediate that 
 \[0=\pt_t \rho -b'\pt_x^2 \rho= \pt_t \rho +\ep \pt_x^2 \rho - (b'+\ep) \pt_x (\rho  \pt_x (\log \rho))=\pt_t \rho +\ep \pt_x^2 \rho + \pt_x (\rho a)  \ \ \text{with} \ \ a (x,t)= \dfrac{b'(t) +\ep}{2 b(t)}x. \]
\vs

We also have 
\[ \int_{1/2}^1 \int_\R \rho^2 dxdt \approx \int_{1/2}^1 \dfrac{1}{\sqrt{b}} dt \ \ \text{and} \ \ 
\int_{1/2}^1 \int_\R \rho a^2 dx dt \approx \int_{1/2}^1\dfrac{(b'+\ep)^2}{b} dt.\]
\vs

We choose $b(t)=\ep (1-t) + (1-t)^\theta$ for $\theta \in (1,2)$. Then, for some $C>0$, 

\[\int_{1/2}^1 \dfrac{1}{\sqrt{b}} dt \leq \int_0^{1/2} t^{-\theta/2} \leq C \ \text{ and} \ \int_{1/2}^1\dfrac{(b'+\ep)^2}{b} dt\leq \int_0^{1/2} t^{\theta-2} dt \leq C. \]

Finally,

\[\rho(x,1/2)=\dfrac{1}{2b_\ep}\exp[-\dfrac{x^2}{4b_\ep}] \ \ \text{with} \ \ b_\ep=\ep/2 +1/2^\theta.\]% \approx 1/2^\theta.\]

In $(0,1/2)$, we take $\rho(x,t)=p(x, \gamma +c(t))$ with $\gamma>0$ and $c$ an increasing smooth  function with $c(0)=0$. It follows as above that 

\[\pt_t \rho + \ep \pt_x^2 \rho + \pt_x(\rho a)=0 \  \text{in} \ \rt \ \ \text{with} \ \  a(x,t)=\dfrac{c'(t) +\ep}{\gamma + c(t)} x.\]

Moreover, 
\[\dfrac{1}{\eta} \int_R x^2\rho(x,0)dx =\dfrac{1}{\eta} \int_R x^2 \dfrac{1}{\sqrt{\gamma}} \exp{(-x^2/\gamma)} dx  \approx \dfrac{\gamma}{\eta} = 1 \ \ \text{if} \ \ \gamma=\eta.\]

and

\[\int_0^{1/2}  \int_\R  \rho^2 dxdt \approx \int_{0}^{1/2} \dfrac{1}{\sqrt{\gamma + c(t)}} dt \ \ \text{and} \ \ \int_{0}^{1/2}  \int_\R \rho a^2 dx dt \approx \int_{0}^{1/2} \dfrac{(c'(t) +\ep)^2}{\gamma +c(t)} dt.\]
 \vs
 
We choose $c(t)=c_0 t^\theta$ for the same $\theta\in (1,2)$ as in the construction of $\rho$ in $(1/2,1)$.
\vs

For $\rho$ to be continuous at $t=1/2$, we choose $c_0$ so that $\eta + c_0/2^\theta=\ep/2 + 1/2^\theta, $ and  note
that $c_0\to 1$ in the limit $\eta, \ep\to0.$
\vs

Moreover, for some $C>0$, 
\[\int_0^{1/2}\dfrac{1}{\sqrt{\eta + c(t)}}dt= \int_0^{1/2}\dfrac{1}{\sqrt{\eta + c_0t^\theta}}dt\leq\int_0^{1/2}\dfrac{1}{t^{\theta/2}} dt\leq C  \]
and 
\[\int_{0}^{1/2} \dfrac{(c'(t) +\ep)^2}{\gamma +c(t)} dt= \int_{0}^{1/2} \dfrac{(c_0\theta t^{\theta-1} +\ep)^2}{\eta + c_0 t^\theta} dt \leq 2 \int_{0}^{1/2} \dfrac{(c_0 \theta  t^{\theta-1})^2}{\eta + c_0 t^\theta} dt + 2 \int_{0}^{1/2} \dfrac{\ep^2}{\eta + c_0 t^\theta} dt.\]
\vs
It is clear that 
\[\int_{0}^{1/2} \dfrac{(c_0 \theta  t^{\theta-1})^2}{\eta + c_0 t^\theta} dt \leq C,\]

while

\[ \int_{0}^{1/2} \dfrac{\ep^2}{\eta + c_0 t^\theta} dt\leq \dfrac{\ep^2}{\eta^{1-1/\theta}}\int_0^{\oo}  \dfrac{1}{1+ c_0 s^\theta} ds \leq C \dfrac{\ep^2}{\eta^{1-1/\theta}}.\]
\vs

For this last integral to be finite we need to choose $\ep^2\leq \eta^\alpha$ with $\alpha=\frac{\theta-1}{\theta} \in (0,1).$
\vs
To prove \eqref{takis63.1}(ii), let $(u_{\ep,\eta}, \rho_{\ep,\eta})$ be the minimizer of the strictly convex functional 
$I_{\ep,\eta}$, that is, for $\beta_{\ep,\eta}=\rho_{\ep,\eta} \pt_x u_{\ep,\eta}$, 
\be\label{takis64}
I_{\ep,\eta}=I_{\ep,\eta}(\rho_{\ep,\eta}, \beta_{\ep,\eta})=\int_\R \rho_{\ep,\eta}(x,0)\dfrac{x^2}{2\eta} +
\int_0^1\int_\R \big[\rho^2_{\ep,\eta} + \dfrac{1}{2} \dfrac{\beta^2_{\ep,\eta}}{\rho_{\ep,\eta}}\big]dx dt
\ee
and
\be\label{takis65}
\pt_t \rho_{\ep,\eta} + \pt_x \beta_{\ep,\eta} +\ep \pt_x^2 \rho_{\ep,\eta}=0 \ \text{in} \ \R\times(0,1) \ \ \rho_{\ep,\eta}(\cdot,1)=\text{\boldmath $\delta$}.
% \ \ \text{and} 
%\ \int_0^1\int_\R \beta_{\ep,\eta}^2\rho_{\ep,\eta} dx<\oo.
\ee
\vs

%In what follows for notational simplicity we drop the dependence of $(\rho_{\ep,\eta}, \beta_{\ep,\eta})$ on $\ep$ and $\eta$ and we simply write $\rho$ and $\beta$.
It follows  from \eqref{takis63.1}(i) that, for some $C \in (0,\oo)$,  
\be\label{takis66}
\int_0^1\int_\R \rho^2_{\ep,\eta} dx dt \leq C \ \ \text{and} \ \ \int_0^1\int_\R \dfrac{1}{2} \dfrac{\beta^2_{\ep,\eta}}{\rho_{\ep,\eta}}dx dt \leq C.
\ee
%the  $\rho_{\ep,\eta}$'s and $\beta_{\ep,\eta}^2\rho_{\ep,\eta}$'  are bounded  respectively in $L^2(\R\times(0,1))$ and $L^1(\R\times(0,1))$. 
\vs
We also know  that, for any measurable subset $A$ of $\R\times(0,1)$,
\[\iint_A |\beta_{\ep,\eta}| dx dt \leq 2 C,\]
which is an immediate consequence of \eqref{takis65} and the elementary inequality 
\[|\beta |=\dfrac{|\beta |\rho}{\rho} \leq \dfrac{\beta^2}{2\rho} + \dfrac{\rho}{2}.\]
It follows that the $\rho_{\ep,\eta}$'s converge weakly in  $L^2(\R\times(0,1))$ to some $\rho$, and the $\beta_{\ep,\eta}$'s and  $\beta^2_{\ep,\eta}/\rho_{\ep,\eta}$'s  weakly in $L^1(\R\times(0,1))$ to some $\beta$ and $\beta^2/\rho$.
\vs
It is now clear that 
\[\underset{\ep\to 0, \eta\to 0, \ep\leq \eta^\alpha}\liminf I(\rho_{\ep, \eta}, \beta_{\ep, \eta}) \geq I(\rho, \beta)\]
and 
\[\pt_t\rho +\pt_x \beta=0 \ \ \text{in} \ \ \R\times (0,1) \ \ \text{and} \ \  \rho(\cdot,1)=\text{\boldmath $\delta$},\]
% \ \ \text{and} \ \ \int_0^1\int_\R \rho \beta^2 dx dt<\oo,\]
and, hence, the claim.

\end{proof}

The first basic estimates for  $u_{\ep,\eta}$ and $\rho_{\ep,\eta}$ are stated and proved  in the next proposition.
% a number of bounds which follow from the discussion.
\vs
\begin{prop}\label{prop11.1}
There exists $C>0$ such that, for $\ep\leq C \eta^\alpha$, 
\begin{equation}\label{takis23}
\bs
\ds \int_0^1 \ds \int_\R [&\rho_{\ep,\eta}^2(x,t) + \dfrac{1}{2}\rho_{\ep,\eta}(x,t) u_{\ep,\eta}^2(x,t)] dx dt  +{\displaystyle \int_\R} \rho_{\ep,\eta}(x,0) \frac{x^2}{2\eta} dx\\[1.5mm]
&+ \sup_{t\in[0,1]}[{\displaystyle \int_\R} \rho_{\ep,\eta}(x,t) u_{\ep,\eta}(x,t) dx +  {\displaystyle \int_\R} \rho_{\ep,\eta}(x,t) \dfrac{x^2}{2(\eta+t)} dx ]  \leq C.
\end{split}
\end{equation}
\end{prop}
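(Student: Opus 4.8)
The plan is to deduce all of \eqref{takis23} from the uniform bound $I_{\ep,\eta}\le C$, valid for $\ep\le C\eta^\alpha$, already provided by Proposition~\ref{prop10}(i). Recalling that $\beta_{\ep,\eta}=\rho_{\ep,\eta}\,\partial_x u_{\ep,\eta}$, so that $\beta_{\ep,\eta}^2/\rho_{\ep,\eta}=\rho_{\ep,\eta}(\partial_x u_{\ep,\eta})^2$, the first two quantities in \eqref{takis23} — the space-time integral $\int_0^1\int_\R[\rho_{\ep,\eta}^2+\tfrac12\rho_{\ep,\eta}(\partial_x u_{\ep,\eta})^2]\,dx\,dt$ and the penalization $\int_\R\rho_{\ep,\eta}(x,0)\tfrac{x^2}{2\eta}\,dx$ — add up exactly to $I_{\ep,\eta}(\rho_{\ep,\eta},\beta_{\ep,\eta})=I_{\ep,\eta}$. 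Hence it remains only to bound $\sup_{t\in[0,1]}\int_\R\rho_{\ep,\eta}u_{\ep,\eta}\,dx$ and $\sup_{t\in[0,1]}\int_\R\rho_{\ep,\eta}\,\tfrac{x^2}{2(\eta+t)}\,dx$ by a constant multiple of $I_{\ep,\eta}$.

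For the first, I would differentiate $\int_\R\rho_{\ep,\eta}u_{\ep,\eta}\,dx$ in $t$ using both equations of \eqref{takis1}: the two viscous contributions, $-\ep\int_\R u_{\ep,\eta}\partial_x^2\rho_{\ep,\eta}\,dx$ and $\ep\int_\R\rho_{\ep,\eta}\partial_x^2 u_{\ep,\eta}\,dx$, cancel after an integration by parts, and one obtains the monotonicity identity
\[
\frac{d}{dt}\int_\R\rho_{\ep,\eta}u_{\ep,\eta}\,dx=\int_\R\Big[\rho_{\ep,\eta}^2+\tfrac12\,\rho_{\ep,\eta}(\partial_x u_{\ep,\eta})^2\Big]\,dx\ \ge\ 0 .
\]
Since $u_{\ep,\eta}(\cdot,0)=x^2/2\eta$ and $\rho_{\ep,\eta}(\cdot,1)=\text{\boldmath $\delta$}$, integrating this over $(0,1)$ identifies $u_{\ep,\eta}(0,1)=\int_\R\rho_{\ep,\eta}(x,0)\tfrac{x^2}{2\eta}\,dx+\int_0^1\int_\R[\rho_{\ep,\eta}^2+\tfrac12\rho_{\ep,\eta}(\partial_x u_{\ep,\eta})^2]\,dx\,dt=I_{\ep,\eta}$, while monotonicity gives, for every $t\in[0,1]$,
\[
0\ \le\ \int_\R\rho_{\ep,\eta}(x,0)\tfrac{x^2}{2\eta}\,dx\ \le\ \int_\R\rho_{\ep,\eta}(x,t)u_{\ep,\eta}(x,t)\,dx\ \le\ u_{\ep,\eta}(0,1)=I_{\ep,\eta}\le C .
\]
Nonnegativity can also be read off from $u_{\ep,\eta}\ge 0$, which follows by comparison with the constant $0$ since $\rho_{\ep,\eta}\ge 0$.

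For the second, I would multiply the continuity equation by $x^2$ and integrate over $\R$; using $\int_\R\rho_{\ep,\eta}(\cdot,t)\,dx\equiv 1$ one gets, with $m(t):=\int_\R\rho_{\ep,\eta}(x,t)x^2\,dx$,
\[
m'(t)=-2\ep+2\int_\R x\,\rho_{\ep,\eta}\,\partial_x u_{\ep,\eta}\,dx\ \le\ 2\,m(t)^{1/2}\Big(\int_\R\rho_{\ep,\eta}(\partial_x u_{\ep,\eta})^2\,dx\Big)^{1/2},
\]
hence $\tfrac{d}{dt}\,m(t)^{1/2}\le\big(\int_\R\rho_{\ep,\eta}(\partial_x u_{\ep,\eta})^2\,dx\big)^{1/2}$. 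Integrating in $t$, using Cauchy--Schwarz together with $\int_0^1\int_\R\rho_{\ep,\eta}(\partial_x u_{\ep,\eta})^2\,dx\,dt\le 2I_{\ep,\eta}$ and $m(0)=2\eta\int_\R\rho_{\ep,\eta}(x,0)\tfrac{x^2}{2\eta}\,dx\le 2\eta I_{\ep,\eta}$, yields $m(t)^{1/2}\le(2\eta I_{\ep,\eta})^{1/2}+(2t\,I_{\ep,\eta})^{1/2}$, so $m(t)\le 4I_{\ep,\eta}(\eta+t)$ and $\int_\R\rho_{\ep,\eta}(x,t)\tfrac{x^2}{2(\eta+t)}\,dx=\tfrac{m(t)}{2(\eta+t)}\le 2I_{\ep,\eta}$ uniformly in $t$. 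Combined with the first bound and Proposition~\ref{prop10}(i), this gives \eqref{takis23} (enlarging $C$ if necessary).

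I expect the only real difficulty to be making the two identities above rigorous rather than formal. For fixed $\ep>0$ the solution $(u_{\ep,\eta},\rho_{\ep,\eta})$ is classical in $\R\times(0,1)$ by Proposition~\ref{prop0}, so what must be checked is that $\rho_{\ep,\eta}$ has fast enough decay in $x$ (Gaussian-type tails, consistent with its representation as the law of the controlled diffusion) and $u_{\ep,\eta}$ grows at most quadratically, so that all integrations by parts are legitimate with vanishing flux at $x=\pm\infty$, and that $t\mapsto\int_\R\rho_{\ep,\eta}u_{\ep,\eta}\,dx$ and $t\mapsto m(t)$ are continuous up to $t=0$, so that the value $\int_\R\rho_{\ep,\eta}(x,0)\tfrac{x^2}{2\eta}\,dx$ may be inserted. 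I would carry this out by inserting a smooth spatial cut-off in each identity, bounding the resulting commutator and boundary errors by the already-established uniform quantities, and then letting the cut-off radius tend to infinity.
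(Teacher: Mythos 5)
Your proposal is correct, and for most of the statement it coincides with the paper's argument: the first two quantities are exactly $I_{\ep,\eta}(\rho_{\ep,\eta},\beta_{\ep,\eta})\le C$ from Proposition~\ref{prop10}(i), and the bound on $\sup_t\int_\R\rho_{\ep,\eta}u_{\ep,\eta}\,dx$ comes from the same monotonicity identity \eqref{takis24} (your additional observation that integrating it over $(0,1)$ gives $u_{\ep,\eta}(0,1)=I_{\ep,\eta}$ is a correct and pleasant byproduct, consistent with the control interpretation). Where you genuinely diverge is the second-moment bound. The paper notes that, since $\rho_{\ep,\eta}\ge 0$, $u_{\ep,\eta}$ is a supersolution of the viscous Hamilton--Jacobi equation with data $x^2/2\eta$, whose explicit solution is $w(x,t)=\frac{x^2}{2(\eta+t)}+\ep(\ln(\eta+t)-\ln\eta)$; since the logarithmic correction is nonnegative, $\int_\R\rho_{\ep,\eta}\frac{x^2}{2(\eta+t)}\,dx\le\int_\R\rho_{\ep,\eta}w\,dx\le\int_\R\rho_{\ep,\eta}u_{\ep,\eta}\,dx\le C$ in one line. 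You instead multiply the continuity equation by $x^2$ and run the $m(t)=\int\rho_{\ep,\eta}x^2\,dx$ differential inequality (correctly keeping track of the favorable $-2\ep$ viscous term), which is precisely the technique the paper itself uses in Proposition~\ref{prop31} and in the proof of Lemma~\ref{lem10}. Your route is self-contained and quantitative ($m(t)\le 4I_{\ep,\eta}(\eta+t)$) and does not need the explicit viscous solution; the paper's route is shorter here and, more importantly, establishes the comparison $u_{\ep,\eta}\ge w$ of \eqref{takis23.1}, which is reused later (Propositions~\ref{prop12} and~\ref{prop20}), so you will still need that inequality downstream even though it is not required for this proposition. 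Your closing remarks about justifying the integrations by parts via cut-offs and tail decay are the right way to make both identities rigorous.
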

\vs
\begin{proof}
Throughout the proof we omit the explicit dependence of $u_{\ep,\eta}$ and $\rho_{\ep,\eta}$ on ${\ep,\eta}$ and simply write $u$ and $\rho.$
\vs
The first three bounds are immediate from Proposition~\ref{prop10}.

%\vs
%For simplicity  in the rest of proof we drop, when possible,  the explicit dependence on $\ep$ and $\eta$ and we denote the solution by $u$ and $\rho$. 
\vs

%
%
%\end{proof}
    %
%
%It also follows that, sufficiently large $\lambda$,
%\be\label{takis23}\overline \rho
%{\displaystyle \int_\R} \rho_\lambda u_\lambda dx, \ \ {\displaystyle \int_\R} \rho_\lambda x^2 dx \ \ \text{and} \ \ {\displaystyle \int_\R} \rho(x,0) \frac{x^2}{2L} dx \ \ \text{are uniformly bounded.}
%\ee 
The uniform in $t$, $\ep$ and $\eta$ bound on ${\displaystyle \int_\R} \rho (x,t) u(x,t)dx$ follows  from the first two bounds  and the identity
\be\label{takis24}
\dfrac{d}{dt} {\displaystyle \int_\R} \rho(x,t) u(x,t) dx= {\displaystyle \int_\R} [\dfrac{1}{2} \rho (x,t) (\pt_x  u)^2(x,t) dx + \rho^2(x,t)] dx.
\ee
%and the first two bounds.
\vs
Finally, for the bound on ${\displaystyle \int_\R} \rho (x,t)  x^2 dx$ we observe that, since $\rho \geq 0$, 
$u$ is a supersolution of $$w_t- \ep \pt_{x}^2w  +\dfrac{1}{2} (\pt_x w)^2 =0\ \ \text{and} \ \  w(x,0)=x^2/2\eta,$$ 
whose solution is 
\be\label{takis160}
w(x,t)= \dfrac{x^2}{2(\eta+t)} +\ep(\text{ln} (\eta+t) -\text{ln}\;\eta). 
\ee
It follows that 
\be\label{takis23.1}
 u \geq w \ \ \text{in} \ \ \R\times [0,1].
 \ee
%= \dfrac{x^2}{2(L+t)} +\ep(\text{ln} (L+t) -\text{ln}L,$$  
%$z$ being is a subsolution with $z(x,0)=x^2/2L$. 
The claim follows from the  bound on $\underset{t\in (0,1)} \sup \ds \int_\R \rho(x,t) u(x,t)  dx$ and the assumption that $\ep\leq C \eta^\alpha$. 

\end{proof}

% Having established the convergence of the $\rho_\lambda$'s we need to investigate the same question for the $u_\lambda$'s. This cannot follow from simple arguments in view of the rather weak available bounds and the $L^2$ in space time convergence of the $\rho_\lambda$'s. 
% \vs
 
To show the convergence of $\rho_{\ep,\eta}$ and $u_{\ep,\eta}$, we need several  estimates involving derivatives, which we formulate and prove  in the  propositions below. 
\vs

%Showing the compactness of the $u_\lambda$'s will  require a number of estimates establishing local boundedness, compactness in the local uniform topology and additional estimates for the $\rho_\lambda$'s. 
%\vs

\begin{prop}\label{prop12}
For every $\theta \in (0,1/2)$ and uniformly for $\ep$ and $\eta$ such that $\ep\leq C\eta^\alpha$, the $u_{\ep,
\eta}$'s  are  bounded in $L^\infty((\theta,1); L^1_{\text{loc}}(\R))$, the $\pt_x u_{\ep, \eta} $'s  are  bounded in $L^2((\eta,1); L^2_{\text{loc}}(\R))$, and, hence, the $u_{\ep,\eta}$'s are  bounded in $L^2((\eta,1); H^1_{\text{loc}}(\R))$.  
\end{prop}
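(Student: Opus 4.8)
The plan is to derive all three bounds from the uniform energy estimate of Proposition~\ref{prop11.1}, namely
\[
\ds\int_0^1\ds\int_\R\big[\rho_{\ep,\eta}^2 + \tfrac12\rho_{\ep,\eta}(\pt_x u_{\ep,\eta})^2\big]\,dxdt + \sup_{t}\ds\int_\R \rho_{\ep,\eta}(x,t)\,u_{\ep,\eta}(x,t)\,dx \le C,
\]
together with the pointwise lower bound \eqref{takis23.1}, i.e. $u_{\ep,\eta}(x,t)\ge x^2/2(\eta+t) + \ep\log((\eta+t)/\eta) \ge x^2/2(t)-C$ on $\R\times[\theta,1]$ once $\ep\le C\eta^\alpha$. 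The key point is that $u_{\ep,\eta}$ solves a viscous Hamilton--Jacobi equation with nonnegative right-hand side $\rho_{\ep,\eta}\ge 0$, so it enjoys one-sided gradient (semiconcavity-type) bounds and, more usefully here, an interior gradient bound via the Bernstein method.

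First I would establish the $L^\infty((\theta,1);L^1_{\mathrm{loc}}(\R))$ bound on $u_{\ep,\eta}$. The lower bound $u_{\ep,\eta}\ge x^2/2t - C$ controls it from below, so only an upper bound on $\int_{|x|\le R}u_{\ep,\eta}(x,t)\,dx$ is needed. For this I would use the dynamic programming (variational) representation of the viscous equation: $u_{\ep,\eta}(x,t)$ is the value function of a stochastic control problem with running cost $\tfrac12|\dot x|^2 + \rho_{\ep,\eta}$ over paths hitting $(x,t)$ starting from $u_{\ep,\eta}(\cdot,\theta/2)$ at time $\theta/2$ (compare Proposition~\ref{prop1.0}(i)); choosing the straight path from a point of controlled $u_{\ep,\eta}(\cdot,\theta/2)$-value gives $u_{\ep,\eta}(x,t)\le C(R,\theta) + \E\int \rho_{\ep,\eta}(X_s,s)ds$, and the expectation of the integral of $\rho_{\ep,\eta}$ along Brownian-type paths is controlled in $L^1_{x}$-average by $\|\rho_{\ep,\eta}\|_{L^2(\rt)}\le C$ after a change of variables / Gaussian bound on transition densities. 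Alternatively — and this is cleaner — I would integrate \eqref{takis24}-type identities: from $\frac{d}{dt}\int\rho u = \int[\rho^2 + \tfrac12\rho(\pt_xu)^2]\ge 0$ one gets monotonicity, and combining $\sup_t\int\rho u\le C$ with a localized version against a cutoff controls $\int_{|x|\le R}u\,dx$ once one knows $\rho$ does not escape to infinity, which follows from the moment bound $\int\rho x^2\le C$ already in Proposition~\ref{prop11.1}.

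Next, the gradient bound. I would run the Bernstein argument: set $v=\pt_x u_{\ep,\eta}$, which solves $\pt_t v - \ep\pt_x^2 v + v\,\pt_x v = \pt_x\rho_{\ep,\eta}$, and test against $\phi^2 v$ with $\phi$ a spatial cutoff, or more robustly differentiate the equation for $w=\tfrac12 v^2$ and localize. The presence of $\pt_x\rho_{\ep,\eta}$ on the right is awkward in $L^\infty$, so instead of an $L^\infty$ gradient bound I would aim directly for the claimed $L^2((\eta,1);L^2_{\mathrm{loc}})$ bound, which is exactly what the energy estimate gives once combined with a lower bound on $\rho_{\ep,\eta}$ away from $t=0$. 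Indeed $\int_0^1\int_\R \rho_{\ep,\eta}(\pt_xu_{\ep,\eta})^2\le C$ is already uniform; the issue is upgrading this weighted bound to an unweighted local bound. For that I would use that on any strip $[\theta,1]\times[-R,R]$ the density $\rho_{\ep,\eta}$ is bounded below: by the parabolic regularization in \eqref{takis1} (the $-\ep\pt_x^2\rho$ term, which is a \emph{backward} heat operator for $\rho$, run forward from $\rho(\cdot,1)=\delta$... ) — and here lies the delicate point — one must instead argue via the representation $\rho_{\ep,\eta}(\cdot,t)$ as a controlled diffusion density, which is strictly positive and locally bounded below on compact subsets of $\R\times(0,1)$ uniformly in small $\ep,\eta$. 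With $\rho_{\ep,\eta}\ge c(\theta,R)>0$ on $[\theta,1]\times[-R,R]$, the weighted estimate immediately yields $\int_\theta^1\int_{-R}^R (\pt_xu_{\ep,\eta})^2\le C/c(\theta,R)$, and combining with the $L^1_{\mathrm{loc}}$ bound on $u_{\ep,\eta}$ gives the $L^2((\eta,1);H^1_{\mathrm{loc}})$ conclusion via Poincaré.

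The main obstacle is the uniform (in $\ep,\eta$) lower bound $\rho_{\ep,\eta}\ge c(\theta,R)>0$ on interior strips. This is genuinely the heart of the matter: $\rho_{\ep,\eta}$ starts as a near-Dirac (through the $x^2/2\eta$ penalization at $t=0$) and ends as a Dirac at $t=1$, so positivity on a compact interior set is a statement about the diffusion not degenerating. I would obtain it from the stochastic representation $\rho_{\ep,\eta}(\cdot,t) = \mathrm{Law}(X_t)$ where $dX_s = \pt_x u_{\ep,\eta}(X_s,s)ds + \sqrt{2\ep}\,dB_s$ — no wait, the sign of $\ep$ in the $\rho$-equation is positive, so one runs this \emph{backward} from $t=1$; either way, a Girsanov argument controlling $\int\rho(\pt_xu)^2$ shows the drift has finite energy, and a Gaussian lower bound for such diffusions (Aronson-type estimates, valid uniformly as $\ep\to 0$ after rescaling time by $\ep$, or rather: using that the contribution of the drift is absolutely continuous w.r.t. the $\ep$-Brownian bridge with a density bounded by $\exp(C/\ep\cdot\text{energy})$ — which is \emph{not} uniform in $\ep$) forces a more careful route. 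The resolution, which I expect the authors use, is to avoid needing pointwise positivity: one can get $\int_\theta^1\int_{-R}^R(\pt_xu_{\ep,\eta})^2$ directly by testing the $v=\pt_xu_{\ep,\eta}$ equation against $\phi^2 v$ and absorbing the $\int\phi^2 v\,\pt_x\rho = -\int \pt_x(\phi^2 v)\rho$ term using Young's inequality against $\int\rho^2\le C$ and the already-controlled $\int\rho v^2$, with the viscous term $\ep\int\phi^2(\pt_xv)^2\ge0$ helping rather than hurting. I would present the gradient bound through this localized energy identity, which sidesteps the positivity of $\rho_{\ep,\eta}$ entirely and keeps all constants uniform in $\ep,\eta$ with $\ep\le C\eta^\alpha$.
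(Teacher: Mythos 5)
Your fallback argument for the gradient bound --- testing the equation for $v=\pt_x u_{\ep,\eta}$ against $\phi^2 v$ --- does not close, and this is the heart of the matter. The convection term produces $\int \phi^2 v^2\pt_x v\,dx=-\tfrac23\int\phi\phi' v^3\,dx$, a cubic quantity in $v$ for which no a priori bound is available; the right-hand side contributes $-\int\phi^2\rho\,\pt_x v\,dx$, which can only be absorbed into the viscous term $\ep\int\phi^2(\pt_x v)^2dx$ at the cost of a factor $1/\ep$ (the unweighted $\pt_x v=\pt^2_xu$ is not otherwise controlled --- the paper only controls $\sqrt{\rho}\,\pt_x^2u$, and that in a \emph{later} proposition); and a Gr\"onwall argument would in any case need an unweighted bound on $\int\phi^2v^2$ at some initial time slice, which the $\rho$-weighted energy estimate of Proposition~\ref{prop11.1} does not supply. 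Your primary route (a uniform-in-$\ep,\eta$ lower bound $\rho_{\ep,\eta}\ge c(\theta,R)$ on interior strips) you correctly flag as unavailable, so the proposal is left without a working mechanism for the $L^2((\theta,1);L^2_{\mathrm{loc}})$ bound on $\pt_xu_{\ep,\eta}$. The same confusion affects the $L^\infty((\theta,1);L^1_{\mathrm{loc}})$ bound on $u_{\ep,\eta}$: knowing $\sup_t\int_\R\rho\,u\,dx\le C$ and that the mass of $\rho$ stays in $\{|x|\le R\}$ controls only the $\rho$-weighted integral of $u$, not $\int_{-R}^Ru\,dx$; passing from one to the other requires either pointwise positivity of $\rho$ (unavailable) or an extra ingredient you do not supply. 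The stochastic-control alternative also has a gap: bounding $\E\int\rho(X_s,s)\,ds$ along paths with noise of size $\sqrt{2\ep}$ by $\|\rho\|_{L^2(\rt)}$ involves the $L^2_x$ norm of the transition density, which blows up like $(\ep s)^{-1/4}$ as $\ep\to0$, and it presupposes control of $u_{\ep,\eta}(\cdot,\theta/2)$ at the starting point, which is part of what is being proved.

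The paper's proof avoids differentiating the equation altogether. It integrates the \emph{undifferentiated} Hamilton--Jacobi equation over $[0,R]\times[\theta,1]$, so that $\tfrac12\iint(\pt_xu)^2$ appears linearly as the good term $F(R,\theta)$, bounded by $1+\int_0^Ru(x,\theta)\,dx$ plus an $\ep$-flux term; the slice integral is then controlled through the average $\langle u\rangle(\theta)$ by combining the fact that $\rho(\cdot,\theta)$ has mass at least $1/2$ in $\{|x|\le R\}$ (second-moment bound of Proposition~\ref{prop11.1}), the bound $\sup_t\int\rho u\le C$, Cauchy--Schwarz with $\|\rho(\cdot,\theta)\|_{L^2}=\phi(\theta)^{1/2}$ and the Poincar\'e inequality, yielding \eqref{takis30.1}; this produces the differential inequality \eqref{takis33} in the variables $(R,\theta)$, closed by a Riccati-type comparison along the curves $\dot R=1$, $\dot\theta=-\phi(\theta)$, giving $F\le C/\theta_0$. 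The $L^\infty L^1_{\mathrm{loc}}$ bound for $u$ is then deduced a posteriori from \eqref{takis30.1} together with a well-chosen time slice and one more integration of the equation. This ordering --- gradient bound first, via the quadratic nonlinearity itself, and the $\rho$-mass/Poincar\'e trick to convert $\rho$-weighted control of $u$ into Lebesgue control --- is exactly the idea missing from your proposal.
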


%corresponds to the variational problem

%In view of the discussion above, the original minimization problem is equivalent to  
%{\bf I am missing something here to get $\rho$ nonnegative}{\bf there is a problem here}
%\[\underset{\rho , u}\max \{  \displaystyle \int_{\R^d} u(x,1)\rho(x,1) dx  - \dfrac{1}{2} {\displaystyle \int}_0^1  {\displaystyle \int} _{\R^d}\rho^2 \; :  u_t- \nu \Delta u +\dfrac{1}{2} |Du|^2 = \rho, \ \ u(\dot,0)\leq u_0 \ \text{and} \rho\geq 0  \}. \]
%
\begin{proof}
%The begin consists of several steps and involves different arguments. To signify the writing throughout the argument we drop the dependence of $\lambda$ and we write $u$ and $\rho$ in place of $u_\lambda$ and $\rho_\lambda$.
%\vs

To simplify the writing, in what follows we drop, when possible, the dependence on $\ep$ and $\eta$ and we write $u$ and $\rho$.  Moreover,  we denote by  $C$ constants may change from line to line, are uniform in $\ep$ and $\eta$ and may depend on $\theta$. 
\vs

Note that the reason we obtain bounds for $t\in (\theta,1)$ instead of $(0,1)$ is in order to have estimates that do not depend on $\eta$.
%can obtain bounds independent of $\eta$ also note that since the bounds are for $t\in (\theta,1)$,  they are also uniform in $$. 
\vs

%We will be assuming that, for example, $\lambda \geq 1/2$ and, hence, $\nu \leq 1$ and we will not be repeating the claim of uniformity.  Finally all the estimates below unless otherwise said, all estimates below will be uniform in $t\in [0,1]$. When we want to claim uniformity in $L$, then the estimates will be uniform in $t\in [\eta,1]$.

To set things up, we first observe that, since $t\in (\theta,1)$, it follows from Proposition~\ref{prop11.1} that
%\[\underset{t\geq \theta}\sup \ds\int_0^1\ds\int_\R \rho |x|^2 dx dt \leq C,\]
\[\underset{t\geq \theta}\sup \ds\int_\R \rho(x,t) |x|^2 dx \leq C,\]
and, hence, there exists some $R_0>0$ such that, for all $R\geq R_0$ and $t\in (\theta,1)$,
%in view of the uniform in $\ep$ and $\eta$ bounds on $\ds\int_0^1\ds\int_\R \rho |x|^2 dx dt$, we can fix $R_0 >0$ such that, for all $R\geq R_0$, 
\be\label{takis30}
\ds \int_{|x|\geq R} \rho(x,t) |x|^2 dx dt \leq \dfrac{1}{2} \ \ \text{and} \ \ \ds \int_{|x|\leq R} \rho(x,t) |x|^2 dx dt \geq \dfrac{1}{2}. 
\ee
\vs
%As already mentioned above, we remark  the claims in \eqref{takis30}  are  uniform in $t\in [0,1]$ for each fixed $L$ and uniform in  $L$  for  $t\in [\eta, 1]$.
%\vs 

The proof consists of two parts. In the first,  we provide, for $R\geq R_0\geq 1$,  a bound on $\ds \int_{-R}^R u(x,t)dx$ by a quantity that depends on $\ds\int_h^1\ds \int_{-R}^R u_x^2(x,t) dx dt$, which we bound in the second step. 
\vs

%Throughout the argument we use the fact that $u$ is even and, hence, all integrals are taken over $[0,R$ instead of $[-R,R]$.

\vs
For every  $R\geq R_0$, let  
$$\langle u\rangle (t)=\dfrac{1}{2R} \ds \int_{-R}^R u(x,t) dx,$$
and observe that, in view of the the lower bound on $u$ in \eqref{takis23.1}, %there exists  $c>0$ such that 
\[\langle u\rangle (t) \geq R^2/6(\eta+t).\]

Using \eqref{takis30}, Poincare's inequality and the  bound on $\ds \int_0^1 \ds \int_\R \rho(x,t) u(x,t) dx dt$, we find that, for all $R\geq R_0$,
\begin{equation*}
\bs
\dfrac{1}{2} \langle u \rangle (t)  \leq \ds \int_{|x|\leq R} \rho(x,t) & \langle u \rangle (t) dx  \leq  \ds \int_{|x|\leq R} \rho(x,t) |u-\langle u\rangle(t)| dx + 
\ds \int_0^1 \ds \int_{|x|\leq R} \rho(x,t) u(x,t) dx dt \\[1.5mm]
& \leq C+ \Big(\ds \int_{|x|\leq R} \rho^2(x,t) dx\;\Big)^{1/2} \Big(\ds \int_{|x|\leq R} |u(x,t)-\langle u\rangle (t)|^2 dx\; \Big)^{1/2}\\[1.5mm]
& \leq  C\Big(1  +  \Big(\ds \int_{|x|\leq R} (\pt_x u)^2(x,t) dx \; \Big)^{1/2}   \Big(\ds \int_\R \rho^2(x,t) dx\;\Big)^{1/2}\Big)\\[1.2mm]
&=C \Big(1+\phi^{1/2}(t) \ds\int_{|x|\leq R} (\pt_x u)^2(x,t) dx dt)^{1/2}\Big ),
\end{split}
\end{equation*}
where $\phi(t)=\ds\int_{\R} \rho^2(x,t) dx$ and $\phi\in L^1((0,1))$.
\vs

Since  $\rho$ and $u$ are even in $x$,  throughout the argument below all integrals are taken over $[0,R]$ instead of $[-R,R]$. 
\vs
Hence, 
\[\langle u\rangle (t) = \frac{1}{R} \ds \int_0^R u(x,t) dx,  \ \  \ds\int_{|x|\leq R} (\pt_x u)^2(x,t) dx dt=2 \ds\int_0^R (\pt_x u)^2(x,t) dx dt,\]
and 
\be\label{takis30.1}
\langle u\rangle (t) \leq C \Big(1+ \phi(t)^{1/2} \big(\ds\int_0^R (\pt_x u)^2(x,t) dx \big)^{1/2}\Big).
\ee

%while integrating the $u$ equation yields
%\be\label{takis31}
%\dfrac{d}{dt} \ds \int_0^R u(x,t) dx -\dfrac{1}{2\lambda}u_x(R) + \dfrac{1}{2} \ds \int_0^R u_x^2(x,t) dx\leq 1.
%\ee
%Finally, we write $D(R)=\ds \int_0^R u_x^2(x,t) x$ and note  that $\dot D(R) = u_x^2(R)$.
%\be\label{takis31}
%D(R)=\ds \int_0^R u_x^2(x,t) du_x^2 (x,t) = u_x^2 (R,t). 
%\ee

%The second part of the proof consists of showing that, for $K\geq R_0$, 
%\be\label{takis32}
%\text{either} \ \ (i)~D(K)\leq 16 \nu^2 \ \ \text{or} \ \ (ii)~u_x^2(R) \leq  \dfrac{1}{4\nu} D(R) \ \ \text{for some $R\in [K, K+1]$}\big)^2. 
%\ee
%For the proof we observe that, if, for every $R\in [K,K+1], \; u_x(R)\geq \dfrac{1}{4\nu}  D(R),$ then $\dot D(R)= u_x^2 \geq \big (\dfrac{1}{4\nu}  D(R)\big)$ for every $R\in [K,K+1],$ which yields 
%$\dfrac{1}{D(K)}\geq \dfrac{1}{D(K)} - \dfrac{1}{D(K+1)}\geq \dfrac{1}{16 \nu^2}$ and, hence, \eqref{takis32}(i).
\vs

Let 
\[F(R,\theta) = \ds \int_\theta^1 \ds \int_0^R (\pt_x u)^2(x,t) dx dt\]
\vs

Integrating the $u$-equation in \eqref{takis1} over $[0,R]\times[h,1]$, using that $R_0\geq 1$, $u\geq0$, $\int_\R \rho(x,t)dx=1$ and \eqref{takis30.1} give, after some elementary rearrangement of terms,
\begin{equation*}%\label{takis33}
\bs
F(R,\theta) &\leq  2\Big ( 1 + \ds \int_0^R u(x,\theta) dx + \ep \ds \int_\theta^1 \pt_x u (R,s) ds\Big) \\[1.5mm]
&\leq  2R \Big( 1 + \langle u\rangle (\theta)   +\ep  \ds \int_\theta^1 \pt_x u (R,s) ds\Big) \\[1.5mm]
& \leq C R \Big(1 + \big(\ds\int_\theta^1 (\pt_x u)^2(R,t) dt \big)^{1/2} + \phi(\theta)^{1/2}\big ( \ds\int_0^R (\pt_x u)^2(x,\theta) dx \big)^{1/2} \Big)\\[1.5mm]
& \leq C R\Big(  1 +  \big(\dfrac{\pt}{\pt R} F(R,\theta)\big)^{1/2}  + \phi(\theta)^{1/2} \big( -\dfrac{\pt}{\pt \theta }F(R,\theta)\big)^{1/2} \Big),\\[1.5mm]
%& \leq C R\Big( 1+ \dfrac{\pt}{\pt R} F(R,h) + \phi(h) (-\dfrac{\pt}{\pt h}F(R,h)). \Big)
\end{split}
\end{equation*}
and, finally,

\be\label{takis33}
F(R,h)^2 \leq  C R^2 \Big( 1+ \dfrac{\pt}{\pt R} F(R,h) - \phi(\theta) \dfrac{\pt}{\pt h}F(R,h)\Big). 
\ee
\vs

We now fix $\bar R \geq R_0$ and $\theta_0$, write $F_0=F(\bar R, \theta_0)$, and, for $s>0$, we seek  $t_0> 0$ such that the odes
\be\label{takis34}
\dot R=1 \ \ \text{and } \ \ \dot \theta=-\phi(\theta) \ \ \text{in}  \ \ [0,t_0]  \ \ \text{and}  \ \ R(t_0)=\bar R  \ \ \text{and} \ \ \theta(t_0)=\theta_0,
\ee
have a solution.  We do this at the end of the ongoing proof. 
\vs
Assuming for the moment that \eqref{takis34} has solution, we observe that \eqref{takis33}  yields that, for some $C>0$,  either
\[\text{either} F\leq \bar C \ \ \text{or} \ \ 
\dfrac{\pt}{\pt s} F(R(s), \theta(s))   \geq  \dfrac{1}{C}F^2. \] 
The second alternative  implies 
\[\dfrac{1}{F(\bar R, \theta_0)} -  \dfrac{1}{F( R(s), \theta(s))} \geq \dfrac{s}{C},\]
and, finally, 
\be\label{takis35}
F(\bar R, \theta_0)\leq \dfrac{C}{\theta_0}.
\ee
\vs 
We now return to solving \eqref{takis34}, the main issue being the existence 
of an interval of existence so that $\theta(t_0)=\theta_0$. 

\vs
Let $a=\phi \in L^1(0,1)$, assume without loss of generality that $\ds \int_0^1 a(z)dz \geq 1$  and rewrite the $\theta$ equation as 
$\dfrac{d \theta}{a}= -ds,$
which leads to 
\be\label{takis35.1}
 \ds \int_\theta^{\theta_0} \dfrac{1}{a(z)} dz=  s. 
\ee
\vs
Since, in view of Jensen's inequality, 
\[\dfrac{1}{\theta_0-\theta}  \ds \int_\theta^{\theta_0} \dfrac{1}{a(z)} dz \geq \Big ({\dfrac{1}{\theta_0-\theta} \ds \int_\theta^{\theta_0} a(z)dz} \Big )^{-1},\]
%\dfrac{1}{\dfrac{1}{\theta_0-\theta} \ds \int_\theta^{\theta_0} a(z)dz},\]
\vs
it follows from \eqref{takis35.1} that 
\[s\geq (\theta_0-\theta)^2 \Big(\ds \int_\theta^{\theta_0} a(z)dz\Big)^{-1}.\]
% \\\ \dfrac{(\theta_0-\theta)^2}{\ds \int_\theta^{\theta_0} a(z)dz }.\]
\vs

Letting $\theta \to 0$, we see that, to have $\theta(t_0)=\theta_0$, it suffices to choose $t_0 \geq \theta_0^2/\ds \int_\theta^{\theta_0}{a(z)} dz,$
which is possible in view of the assumption that  $\ds \int_0^1 a(z)dz \geq 1$. 
\vs
Finally, we choose $R(0)=\bar R-t_0$. 
\vs

Having established that the $\pt_x u_{\ep, \eta} $'s are  bounded in $L^2((\theta,1); L^2_{\text{loc}}(\R))$, we now turn to the bound on
$u_{\ep, \eta}$ in $L^\infty((\eta,1); L^1_{\text{loc}}(\R)),$ which will follow from \eqref{takis30.1}. 
\vs

Fix $R'>R\geq R_0$. Since $\ds \int_{\theta/2}^1 \ds \int_0^{R'} (\pt_x u_x)^2(x,s) dx ds$ is bounded independently of $\ep$ and $\eta$, there exists $R''\in (R,R')$ such that 
\be\label{takis65}
\ds \int_{\theta/2}^1  (\pt_x u)^2(R'',s) ds \ \ \text{ is bounded independently of $\ep$ and $\eta$}.
\ee
\vs

We also know from the first estimate in this proof and the fact that $\int_0^1\int_\R \rho^2 dx ds\leq C$  that 
\[ \ds \int_{\theta/2}^\theta \ds \int_0^{R''} \big[(\pt_x u)^2(x,s) +  \rho^2(x,s)\big] dx ds \ \ \text{ is  bounded uniformly in $\ep$ and $\eta$}\]
% (recall that $\ds \int_{\eta/2}^\eta \ds \int_0^{R''} \rho^2(x,s) dx ds \leq C$, 
\vs

It follows that there exists some 
$h\in [\theta/2, \theta]$ such that 
\be\label{takis36}
\ds \int_0^{R''} \rho^2(x,h) dx \leq C \ \ \text{and} \ \ \ds\int_0^{R''} (\pt_x u)^2(x,h) dx \ \ \text{are bounded uniformly in $\ep$ and $\eta$}
\ee
which together with  \eqref{takis30.1} imply that 
\be\label{takis37}
\ds \int_0^{R{''}}u(x,h) dx \ \ \text{is bounded independently of $\ep$ and $\eta$}.
\ee

%Next choose $R'>R$ and observe that since $\ds \int_{\eta/2}^1 \ds \int_0^{R'} u_x^2(x,s) dx ds$ is bounded independently of $\lambda$, there exists $R''\in (R,R')$ such that $\ds \int_{\eta/2}^1  u_x^2(R'',s) ds$ is bounded independently of $\lambda$.
\vs

Integrating the $u$ equation over $[0,R''] \times [h,t]$ we find 
\be\label{takis66}
\ds \int_0^{R''} u(x,t) dx \leq 1 + \ds \int_0^{R''} u(x,h) dx +\ep( \ds \int_h^1 (\pt_x u)^2(R'',s)ds )^{1/2}. 
\ee
\vs
In view of the above observation and \eqref{takis65} we know that  the right hand side of \eqref{takis66} is bounded independently of $\ep$ and $\eta$, so 
the  same is true for $\ds \int_0^{R{''}} u(x,t) dx$, and, hence, $\ds \int_0^{R} u(x,t) dx$.

\end{proof}

We prove next a new uniform estimate on $\rho_{\ep,\eta}$, which, given  that  their  limit will be Dirac masses at  $t=0$ and $t=1$, can be  only local in time. 

\begin{prop}\label{prop125}
 For  $\ep$ and $\eta$ such that $\ep\leq C \eta^\alpha$ and  any $\theta \in (0,1/2)$,  the $\pt_x \rho_{\ep, \eta}$'s and $\sqrt{\rho_{\ep, \eta}} \pt_x^2u_{\ep,
 \eta}$'s are bounded uniformly in $\ep$ and $\eta$ in $L^2(\theta, 1-\theta ; L^2(\R))$,  and the $\rho_{\ep, \eta}$'s  are bounded uniformly in $\ep$ and $\eta$ in $L^2(\theta, 1-\theta ;H^1(\R))$.
\end{prop}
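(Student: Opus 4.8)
The plan is to reduce everything to one pointwise-in-time identity that simultaneously controls $\pt_x\rho_{\ep,\eta}$ and $\sqrt{\rho_{\ep,\eta}}\,\pt_x^2 u_{\ep,\eta}$. Write $u,\rho$ for $u_{\ep,\eta},\rho_{\ep,\eta}$ and set $z=\pt_x^2 u$. Differentiating the Hamilton--Jacobi equation of \eqref{takis1} twice in $x$ gives $\pt_t z-\ep\pt_x^2 z+z^2+\pt_x u\,\pt_x z=\pt_x^2\rho$. Multiplying this by $\rho$, multiplying the continuity equation of \eqref{takis1} by $z$, adding the two and integrating over $\R$, every term containing $\ep$ and every first-order transport cross-term cancels after an integration by parts, and one is left with
\[
\frac{d}{dt}\int_\R\rho\,\pt_x^2 u\,dx+\int_\R\rho\,(\pt_x^2 u)^2\,dx+\int_\R(\pt_x\rho)^2\,dx=0 .
\]
The boundary terms at $x=\pm\infty$ vanish for fixed $\ep,\eta$ since $\rho$ and its derivatives have Gaussian decay while $u$ and its derivatives grow at most polynomially; this is made rigorous by a routine truncation. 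Setting $G(t)=\int_\R\rho(x,t)\,\pt_x^2 u(x,t)\,dx=-\int_\R\pt_x\rho\,\pt_x u\,dx$, the identity reads $\dot G=-\big(\int_\R\rho(\pt_x^2 u)^2+\int_\R(\pt_x\rho)^2\big)$, so $G$ is nonincreasing, and integrating over $[\theta,1-\theta]$,
\[
\int_\theta^{1-\theta}\!\!\int_\R\big[\rho\,(\pt_x^2 u)^2+(\pt_x\rho)^2\big]\,dx\,dt=G(\theta)-G(1-\theta) .
\]
Thus the proof reduces to bounding $G(\theta)$ above and $G(1-\theta)$ below, uniformly in $\ep,\eta$ with $\ep\le C\eta^\alpha$.

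For the bound near $t=0$: since $\int_\R\rho(\cdot,t)\,dx=1$, Cauchy--Schwarz gives $G(t)^2\le\int_\R\rho(\pt_x^2 u)^2\,dx\le -\dot G(t)$, i.e.\ the Riccati differential inequality $\dot G\le -G^2$. Hence, for $0<s<t$, as long as $G>0$ on $[s,t]$ one has $1/G(t)\ge 1/G(s)+(t-s)>t-s$; letting $s\to0^+$ gives $G(t)\le 1/t$ wherever $G$ is positive on $(0,t]$, and once $G$ first reaches $0$ it stays $\le0$ because it is nonincreasing. So $G(t)\le 1/t$ on $(0,1)$; in particular $G(\theta)\le 1/\theta$ uniformly. (Since $u(\cdot,0)=x^2/2\eta$ one has $G(0^+)=1/\eta$, so this bound is sharp in order as $t\to0$.)

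The lower bound $G(1-\theta)\ge -C(\theta)$ near $t=1$ is the step I expect to be the main obstacle, and it is genuinely of a different nature: unlike near $t=0$, where the penalization forces $\pt_x^2 u$ large and positive, near $t=1$ one has $\pt_x^2 u\to-\infty$ on the shrinking support of $\rho$ (in the formal solution $\pt_x^2\bar u=a(t)\approx-\frac{2}{3(1-t)}$), so one can only hope for a bound degenerating like $1/\theta$. The point to exploit is that the terminal datum $\rho_{\ep,\eta}(\cdot,1)=\text{\boldmath $ \delta$}$ carries no $\eta$-degeneracy, so the behaviour there should be uniform. Concretely, testing the continuity equation of \eqref{takis1} against $x^2$ and using $\int_0^1\int_\R\rho(\pt_x u)^2\,dx\,dt\le C$ from Proposition~\ref{prop11.1} should yield, by a Gronwall argument, a quantitative concentration estimate $\int_\R x^2\rho(\cdot,t)\,dx\le\omega(1-t)$ with $\omega(s)\to0$ as $s\to0$, uniformly in $\ep,\eta$; combining this with $G=-\int_\R\pt_x\rho\,\pt_x u\,dx$, the lower bound $u\ge x^2/2(\eta+t)$, and the energy bound rules out $G(1-\theta)$ being more negative than $-C(\theta)$ — roughly, on a support of $\rho$ of width $\sim\omega(\theta)^{1/2}$ one has $|G|\le C\,\omega(\theta)^{-1/2}$. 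An alternative is to run the same identity for the time-reversed system, in which $\rho$ solves a forward parabolic equation out of the point mass, so that the parabolic smoothing gives, after time $\theta$, a bound on $\int_\R(\pt_x\rho)^2(\cdot,1-\theta)\,dx$ and hence on $G(1-\theta)$ via $G=-\int\pt_x\rho\,\pt_x u$ together with the $L^2((\eta,1);H^1_{\mathrm{loc}})$ bound on $u_{\ep,\eta}$ of Proposition~\ref{prop12}. Controlling the merely $L^2_{\mathrm{loc}}$ drift in either argument is the delicate point.

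Granting $G(\theta)\le 1/\theta$ and $G(1-\theta)\ge -C(\theta)$, the integrated identity gives $\int_\theta^{1-\theta}\int_\R[\rho(\pt_x^2 u)^2+(\pt_x\rho)^2]\,dx\,dt\le C(\theta)$, uniformly in $\ep,\eta$, which is exactly the asserted $L^2(\theta,1-\theta;L^2(\R))$ bound on $\sqrt{\rho_{\ep,\eta}}\,\pt_x^2 u_{\ep,\eta}$ and on $\pt_x\rho_{\ep,\eta}$. The $L^2(\theta,1-\theta;H^1(\R))$ bound on $\rho_{\ep,\eta}$ then follows by adding the uniform $L^2((0,1)\times\R)$ bound on $\rho_{\ep,\eta}$ from Proposition~\ref{prop11.1}.
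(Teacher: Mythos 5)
Your starting identity is exactly the paper's \eqref{takis40.11}: with $G(t)=\int_\R\rho\,\pt_x^2u\,dx=-\int_\R\pt_x\rho\,\pt_xu\,dx$ one has $\dot G=-\int_\R[(\pt_x\rho)^2+\rho(\pt_x^2u)^2]\,dx$, and the cancellation of the $\ep$-terms checks out. But your route from there has a genuine gap that you yourself flag: the lower bound $G(1-\theta)\ge -C(\theta)$ is never proved, and the two sketches you offer do not close it. The ``concentration'' sketch is not a proof: knowing $\int_\R x^2\rho(\cdot,t)\,dx$ is small says nothing about $\int_\R\pt_x\rho\,\pt_xu\,dx$ without derivative control, and the claimed bound $|G|\le C\,\omega(\theta)^{-1/2}$ does not follow from the ingredients listed. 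The time-reversal sketch is likewise not carried out. Ironically, the missing bound follows from the very Riccati inequality you already derived: Cauchy--Schwarz gives $\dot G\le -G^2$, so on any interval where $G<0$ the function $H=-G$ satisfies $\dot H\ge H^2$ and would blow up at time $t_0+1/H(t_0)$; since the solution is classical on $\R\times(0,1)$, this forces $H(t)\le 1/(1-t)$, i.e.\ $G(1-\theta)\ge -1/\theta$. This is precisely the argument the paper runs in Proposition~\ref{prop15} (for $t$ near $1$), mirroring your argument near $t=0$. With that supplied, your proof is complete.

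It is worth noting that the paper's own proof of Proposition~\ref{prop125} sidesteps the endpoint values of $G$ entirely: it multiplies the identity by $\chi^2(t)$ for a cutoff $\chi$ supported in $(\theta,1-\theta)$, integrates by parts in time, integrates the resulting cross term by parts in $x$ to get $-2\int\!\!\int\chi\chi'\rho\,\pt_x^2u$, and absorbs it by Young's inequality into $\tfrac12\int\!\!\int\chi^2\rho(\pt_x^2u)^2+C\int\!\!\int(\chi')^2\rho^2$, the last term being controlled by the uniform $L^2(\rt)$ bound on $\rho_{\ep,\eta}$ from Proposition~\ref{prop11.1}. That argument needs no information at $t=\theta$ or $t=1-\theta$ and is the cleaner way to localize; your approach, once repaired as above, gives the same conclusion with the explicit rate $C/\theta$ but requires the additional Riccati comparison on both sides.
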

\begin{proof}
As in the previous proof,  we omit the dependence of $u_{\ep,
\eta}$ and $\rho_{\ep,\eta}$ on $\ep$ and $\eta$. 
\vs

A simple computation, which can be justified by considering  approximate problems with regularized $\rho(\cdot, 1)$ and then passing to the limit,  leads, for each $t\in (0,1)$,  to the identity 
\be\label{takis40.11}
\dfrac{d}{dt}\ds\int_\R \pt_x \rho(x,t) \pt_xu(x,t) dx= \ds\int_\R \big[(\pt_x\rho)^2 (x,t) + \rho(x,t) (\pt_x^2 u)^2(x,t) \big]dx.
\ee
\vs 
%Although the  derivation of \eqref{takis40}  is   formal, it  can be justified by considering  approximate problems with regularized $u(\cdot, 0)$ and  $\rho(\cdot, 1)$ and then passing to the limit.
%\vs

Next we localize the estimate in time staying away from $0$ and $1$. For this, we consider a smooth cut-off function $\chi$ supported in $(\theta, 1-\theta)$ and  multiply \eqref{takis40.11} by $\chi^2$. 
\vs
Integrating in time we get

\[ 
\ds \int_0^1\ds  \chi^2(t) \dfrac{d}{dt}\ds\int_\R \pt_x\rho(x,t) \pt_xu(x,t) dx dt=  \ds \int_0^1 \ds\int_\R \chi^2(t) \big[(\pt_x\rho)^2 (x,t) + \rho(x,t) (\pt_x^2 u)^2(x,t) \big] dx dt, \]
\vs
and, hence,

\begin{equation*}
\bs
\ds \int_0^1 \ds\int_\R \chi^2(t) \big[(\pt_x\rho)^2 (x,t) &+ \rho(x,t) (\pt_x^2 u)^2(x,t) \big]dx dt =2 \ds \int_0^1 \ds\int_\R \chi(t) \chi'(t)\pt_x\rho(x,t) \pt_xu(x,t) dx dt \\[1.5mm]
&= -2 \ds \int_0^1 \ds\int_\R \chi(t) \chi'(t)\rho(x,t) \pt_x^2u (x,t) dx dt \\[1.5mm]
&\leq \dfrac{1}{2} \ds \int_0^1 \ds\int_\R \chi(t) \chi'(t)\rho(x,t) ( \pt_x^2 u)^2 (x,t) dxdt + 2  \ds \int_0^1 \ds\int_\R (\chi')^2(t) \rho^2(x,t) dxdt,
\end{split} 
\end{equation*}
\vs
which leads to the claim in view of the fact that 
\[ \ds \int_0^1 \ds\int_\R (\chi')^2(t) \rho^2(x,t) dxdt \leq C \ds \int_0^1 \ds\int_\R \rho^2(x,t) dxdt. \]
\end{proof}

Next we obtain uniform in $\ep$ and $\eta$ estimates on $\rho_{\ep,\eta}$ and $u_{\ep,\eta}$  describing their behavior  $t=1$. These estimates as well as the ones about the behavior near $t=0$ are needed in order to obtain the convergence of the full family of solutions of  \eqref{takis1}.
\vs

The first result is about a surprising estimate in view of the fact that  $\rho_{\ep,\eta}(\cdot,1)=\text{\boldmath $\delta$}.$.
\vs

\begin{prop}\label{prop15}
For each $\theta\in (0,1)$,  there exists an, independent of $\ep$ and $\eta$, $C>0$ such that, for all $\alpha\in (0,1/2)$,
\be\label{takis70}
\ds \int_{\theta}^{1} [\rho_{\ep,\eta}(\cdot,t)]_{C^{0,\alpha}} dt \leq C.
\ee
%Moreover, 
%\be\label{takis71}
%\underset{t\in (1/2,1)}\sup \|u_{\ep,\eta}(\cdot,t)\|_{C^{0,\alpha}}  \leq \|u_{\ep,\eta}(\cdot,1/2)\|_{C^{0,\alpha}} +  C.
%\ee
\end{prop}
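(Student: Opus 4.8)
The plan is to establish a uniform Hölder bound on $\rho_{\ep,\eta}(\cdot,t)$ for $t$ bounded away from $0$ by exploiting the parabolic regularity inherent in the $\rho$-equation of \eqref{takis1}, together with the estimates already proven in Propositions~\ref{prop11.1}, \ref{prop12}, and \ref{prop125}. The key observation is that, writing $b_{\ep,\eta}=\pt_x u_{\ep,\eta}$, the density solves the linear uniformly parabolic equation
\[
\pt_t \rho_{\ep,\eta} + \ep\,\pt_x^2 \rho_{\ep,\eta} + \pt_x(b_{\ep,\eta}\,\rho_{\ep,\eta})=0 \quad\text{in}\quad \R\times(0,1),
\]
which, after the time reversal $t\mapsto 1-t$, becomes a forward heat equation with diffusivity $\ep$ and a divergence-form drift term. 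Although $\ep\to0$ degenerates the leading-order diffusion, one does not want a bound that blows up as $\ep\to0$; instead the mechanism for regularity near $t=1$ must come from the structure of the first-order limiting flow. So I would first record that, for $t\in(\theta,1)$, Proposition~\ref{prop125} gives $\rho_{\ep,\eta}\in L^2(\theta',1-\theta';H^1(\R))$ uniformly (for each $\theta'\in(0,1/2)$), hence $\rho_{\ep,\eta}(\cdot,t)\in C^{0,1/2}(\R)$ for a.e. $t$ in that range with an $L^2$-in-time bound on the Hölder seminorm; that already handles the range $t\in[\theta,1-\theta']$ for the exponent $\alpha\le 1/2$ after Hölder's inequality in time.

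The remaining and genuinely delicate range is $t$ near $1$, where $\rho_{\ep,\eta}(\cdot,1)=\text{\boldmath$\delta$}$, so no uniform $H^1$ bound can possibly hold and the $C^{0,\alpha}$ seminorm of $\rho_{\ep,\eta}(\cdot,t)$ must in fact \emph{diverge} as $t\to1$; the content of the Proposition is that the divergence is integrable in $t$. Here I would use the explicit asymptotics \eqref{211}, which tell us that the limiting profile $\ol\rho(\cdot,t)=r(t)(1-(x/l(t))^2)_+$ has $r(t)\approx(4(1-t))^{-2/3}$ and $l(t)\approx 3/(4r(t))\approx c(1-t)^{2/3}$, so that $[\ol\rho(\cdot,t)]_{C^{0,1}}\approx r(t)/l(t)\approx c(1-t)^{-4/3}$ — \emph{not} integrable — whereas $[\ol\rho(\cdot,t)]_{C^{0,\alpha}}\approx r(t)\,l(t)^{-\alpha}\approx c(1-t)^{-2/3-2\alpha/3}$, which \emph{is} integrable in $t$ on $(\theta,1)$ precisely when $\alpha<1/2$. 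So the target exponent restriction $\alpha\in(0,1/2)$ is exactly what makes the limiting-profile seminorm integrable, and the strategy near $t=1$ is to transfer this to $\rho_{\ep,\eta}$ uniformly: rescale in space near $t=1$ by the natural length scale $\ell(1-t)\sim(1-t)^{2/3}$, apply interior parabolic Schauder/De~Giorgi–Nash–Moser estimates to the rescaled density on unit time intervals (using the uniform $L^2$ bound on $\rho_{\ep,\eta}$ and the uniform control of the drift $b_{\ep,\eta}=\pt_x u_{\ep,\eta}$ from Proposition~\ref{prop12} together with the second-derivative bound $\sqrt{\rho_{\ep,\eta}}\,\pt_x^2 u_{\ep,\eta}\in L^2$ from Proposition~\ref{prop125}), and then undo the rescaling, picking up the factor $\ell(1-t)^{-\alpha}$ that integrates against $dt$ on $(\theta,1)$ for $\alpha<1/2$.

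I expect the main obstacle to be making the rescaling-and-parabolic-regularity argument near $t=1$ uniform in $\ep$: the diffusion coefficient $\ep$ in the $\rho$-equation does not match the natural self-similar scaling $x\mapsto\ell(1-t)\,y$, $1-t\mapsto \ell(1-t)^{3}\,s$ of the first-order problem, so after rescaling the diffusion term is multiplied by a power of $(1-t)$ that one must show does not destroy the estimate (it is subcritical in the relevant regime but must be tracked carefully), and, separately, the drift term $b_{\ep,\eta}$ is only controlled in $L^2_t L^2_{x,\text{loc}}$ with a weight $\rho_{\ep,\eta}$ on its derivative, which is not quite the boundedness one would like for a clean Schauder estimate — one has to run a De~Giorgi–Nash–Moser iteration with a drift in a Morrey/Lebesgue class rather than invoking Schauder directly, and check that the resulting modulus of continuity has the claimed power-law dependence on the distance to $t=1$. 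A cleaner alternative, which I would pursue in parallel, is to avoid rescaling altogether and instead differentiate the $\rho$-equation in $x$, multiply by $\pt_x\rho_{\ep,\eta}$ weighted by a time-dependent cutoff that degenerates like $(1-t)^{\gamma}$ as $t\to1$, and integrate: choosing $\gamma$ so that the bad boundary/commutator terms are absorbed by the good ones using the a~priori bounds, one obtains $\int_\theta^1 (1-t)^{\gamma}\|\pt_x\rho_{\ep,\eta}(\cdot,t)\|_{L^2(\R)}^2\,dt\le C$, from which, by Gagliardo–Nirenberg interpolating $\|\rho(\cdot,t)\|_{C^{0,\alpha}}$ between $\|\pt_x\rho(\cdot,t)\|_{L^2}$ and $\|\rho(\cdot,t)\|_{L^2}$ (the latter only in $L^2_t$), Hölder in $t$ yields \eqref{takis70} for all $\alpha<1/2$ provided $\gamma<1$, which is consistent with the $(1-t)^{-2/3}$ behavior forced by \eqref{211}.
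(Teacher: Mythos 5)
Your diagnosis of why $\alpha<1/2$ is the right threshold (the limiting parabola has $[\ol\rho(\cdot,t)]_{C^{0,\alpha}}\approx r(t)l(t)^{-\alpha}\approx (1-t)^{-2(1+\alpha)/3}$, integrable exactly for $\alpha<1/2$) is correct and matches the structure of the true proof. But neither of your two routes closes, and the key idea of the paper is missing from both. The paper does not use parabolic regularity theory for the $\rho$-equation at all; it uses the mean-field-games duality identity
\[
\dfrac{d}{dt}\ds\int_\R \pt_x\rho\,\pt_x u\,dx \;=\; \ds\int_\R \big[(\pt_x\rho)^2+\rho\,(\pt_x^2u)^2\big]\,dx ,
\]
i.e.\ pairing $\pt_x\rho$ with $\pt_x u$ rather than with itself. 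Writing the left side as $-\frac{d}{dt}\int\rho\,\pt_x^2u$, Cauchy--Schwarz together with $\int\rho\,dx=1$ gives $\dot g\geq \tfrac12 g^2$ for $g(t)=-\int\rho\,\pt_x^2u\,dx$, and the Riccati blow-up argument forces $g(t)\leq C/(1-t)$ on $(1/2,1)$ \emph{uniformly in $\ep,\eta$}. Integrating the identity from a good time $t_{\ep,\eta}\in(1/4,1/2)$ (supplied by Propositions~\ref{prop11.1} and~\ref{prop125}) then yields $\int_{1/2}^{t}\!\int_\R(\pt_x\rho)^2\,dx\,ds\leq C/(1-t)$. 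This is precisely the weighted $H^1$ control you were hoping for, but note it corresponds to $\|\pt_x\rho(\cdot,t)\|_{L^2}^2\sim(1-t)^{-2}$; your proposed bound $\int(1-t)^{\gamma}\|\pt_x\rho\|_{L^2}^2\,dt\leq C$ with $\gamma<1$ is false even for the explicit limit profile (one needs $\gamma>1$), so the exponent bookkeeping in your second route is wrong as stated.

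Your two proposed mechanisms also fail on their own terms. Route (a): there is no uniform ellipticity to run De~Giorgi--Nash--Moser on ($\ep\to0$), the drift $\pt_xu_{\ep,\eta}$ is not in a Morrey class that such theorems require, and the mismatch between $\ep$ and the self-similar scaling is not merely a technicality to ``track carefully'' --- you give no reason the rescaled estimate is uniform. Route (b): differentiating $\pt_x(\rho\,\pt_xu)$ in $x$ and testing against $\pt_x\rho$ produces the term $\int\rho\,\pt_x^3u\,\pt_x\rho$, and nothing in the available estimates controls $\pt_x^3u$; the whole point of pairing with $\pt_xu$ instead is that the MFG structure turns the cross terms into the perfect squares $(\pt_x\rho)^2+\rho(\pt_x^2u)^2$. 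Finally, even granting the bound $\int_{1/2}^{t}\int(\pt_x\rho)^2\leq C/(1-t)$, one more ingredient is needed: the interpolation $[\rho(\cdot,t)]_{C^{0,\alpha}}^{2+\beta}\leq C\int(\pt_x\rho)^2dx$ (using $\|\rho(\cdot,t)\|_{L^1}=1$, with $\beta>0$ iff $\alpha<1/2$) converts it into a reverse-H\"older-type statement $\int_{h}^{h'}a^{2+\beta}\leq C/(h'-h)$, and the passage from this to $\int a\,dt\leq C$ is not a single application of H\"older in time but the dyadic summation argument of Lemma~\ref{lem1}, which works precisely because $2+\beta>2$.
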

\begin{proof}
As before, when there is no confusion,  we drop the dependence on $\ep$ and $\eta$.
\vs

In the proof we work with $\theta=1/2$ and leave it up to the reader to adjust the argument for $\theta\in (0,1).$

%We remark that in Proposition~\ref{prop15} the lower limits of integration in \eqref{takis70} can be taken to be any $\theta\in (0,1)$.
\vs
%The estimate on $\|u_{\ep,\eta}(\cdot,t)\|_{C^{0,\alpha}}$ is an immediate consequence of the regularity of parabolic equations and \eqref{takis70}.
%\vs

%The proof of \eqref{takis70} is bit unusual and novel and the result surprising in view of the fact that $\rho_{\ep,\eta}(\cdot,1)=\text{\boldmath $\delta$}.$
%\vs

The starting point is the identity \eqref{takis40.11}    
which can be rewritten as
\be\label{takis72}
\dfrac{d}{dt} \ds\int_\R -\rho(x,t) \pt_x^2 u(x,t)dx=\ds\int_\R \big[(\pt_x\rho)^2 (x,t) + \rho(x,t) (\pt_x^2 u)^2(x,t) \big]dx.
\ee
\vs

It follows that 

\[\dfrac{d}{dt} \ds\int_\R -\rho(x,t) \pt_x^2 u(x,t)dx \geq \dfrac{1}{2} \big(-\ds\int_\R \rho(x,t) \pt_x^2 u(x,t) dx\big)^2,\]
\vs
and, hence, there exists $C>0$ such that, for all $t\in (1/2,1)$,
\vs
\be\label{takis73}
\ds\int_\R -\rho(x,t) \pt_x^2 u(x,t)dx \leq \dfrac{C}{1-t}.
\ee
\vs

%Integrating  \eqref{takis72} over $(1/2,1)$  \eqref{takis72} gives 
%\[
%\ds\int_{1/2}^1\ds_\R (\pt_x\rho)^2(x,t) dx\leq \int_{1/2}^1 (\pt_x\rho)^2(x,1/2) dx + \ds \int_\R \pt_x\rho (x,t) \pt_x u(x,t) dx \leq \dfrac{c_0}{1-t}.
%\]

It follows from Proposition~\ref{prop11.1} and Proposition~\ref{prop125} that there exists   $C>0$ such that, for each $\ep$ and $\eta$, there exists $t_{\ep,\eta}\in (1/4, 1/2)$ such that 
\[\int_\R \big[ (\pt_x\rho_{\ep,\eta})^2(x,t_{\ep,\eta}) + (\pt_x u_{\ep,\eta})^2(x,t_{\ep,\eta})\big]dx \leq C.\]
and, hence,
\be\label{takis74}
\int_\R  \pt_x\rho_{\ep,\eta}(x,t_{\ep,\eta}) \pt_x u_{\ep,\eta}(x,t) dx \leq C.
\ee
\vs

Integrating  \eqref{takis72} over $(t_{\ep,\eta},t)$  we find
\vskip.05in 
\[\ds\int_{1/2}^t\ds \int_\R (\pt_x\rho)^2(x,t) dx \leq \ds\int_{t_{\ep,\eta}}^t\ds \int_\R (\pt_x\rho)^2(x,t) dx\leq    \ds \int_\R \pt_x\rho (x,t_{\ep,\eta}) \pt_x u(x,t_{\ep,\eta}) dx - \int_\R \pt_x\rho (x,t) \pt_x u(x,t)  dx,  \]
\vs
and, in view of \eqref{takis73} and \eqref{takis74}, there exists  some other $C>0$,

%Integrating  \eqref{takis72} over $(1/2,1)$  \eqref{takis72} we get 
%\[
%\ds\int_{1/2}^1\ds_\R (\pt_x\rho)^2(x,t) dx\leq \int_{1/2}^1 (\pt_x\rho)^2(x,1/2) dx + \ds \int_\R \pt_x\rho (x,t) \pt_x u(x,t) dx \leq \dfrac{c_0}{1-t}.
%\]
%

\be\label{takis75}
\ds\int_{1/2}^t\ds \int_\R (\pt_x\rho)^2(x,t) dx \leq \dfrac{C}{1-t}.
\ee
\vs
Next, we recall that, for every $\alpha\in (0,1/2)$, there exists $\beta \in (0,1)$ such that 
\[ [\rho(\cdot,t)]_{C^{0,\alpha}}^{2+\beta} \leq C \ds\int_\R (\pt_x \rho)^2(x,t) dx,\]
which together with \eqref{takis75} imply that 

%\be\label{takis76}
\[\ds \int_{1/2}^t [\rho(\cdot,s)|_{C^{0,\alpha}}^{2+\beta} ds \leq \dfrac{C}{1-t}.\]
%\ee
\vs

Then, an elementary analysis lemma which we state and prove after the end of the ongoing proof yields \eqref{takis70}.

\end{proof}

We state and prove next the auxiliary fact we used at the end of the proof above. For simplicity we change $t$ to $1-t$.

\begin{lem}\label{lem1}
Let $a:(0,1)\to [0,\oo)$ and $m>2$ be such that, for $h, h'\in (0,1/2)$ with $h<h'$, $\int_h^{h'} a^m(s) ds\leq C/(h'-h)$. Then $\int_0^{1/2} a(s) ds \leq C.$ 
\end{lem}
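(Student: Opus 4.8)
The plan is to decompose $(0,1/2)$ dyadically and estimate $a$ on each piece by H\"older's inequality; the exponents combine to give a geometric series in the scale index that is summable precisely when $m>2$. Concretely, I would put, for $k\ge1$,
\[
I_k=\big[2^{-k-1},\,2^{-k}\big),\qquad |I_k|=2^{-k-1},
\]
so that $(0,1/2)=\bigcup_{k\ge1}I_k$ up to a null set. Taking $h=2^{-k-1}$ and $h'=2^{-k}$ in the hypothesis (for $k=1$ one lets $h'\uparrow\tfrac12$, which by monotone convergence leaves the bound unchanged) gives
\[
\int_{I_k}a^m(s)\,ds\;\le\;\frac{C}{h'-h}\;=\;C\,2^{k+1}.
\]

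Then H\"older's inequality on $I_k$ with exponents $m$ and $m/(m-1)$ yields
\[
\int_{I_k}a(s)\,ds\;\le\;|I_k|^{1-\frac1m}\Big(\int_{I_k}a^m(s)\,ds\Big)^{\frac1m}
\;\le\;\big(2^{-k-1}\big)^{1-\frac1m}\big(C\,2^{k+1}\big)^{\frac1m}
\;=\;C^{1/m}\,2^{-(k+1)\left(1-\frac2m\right)},
\]
and, since $1-\tfrac2m>0$ by hypothesis — the single place where $m>2$ is used — summing over $k\ge1$ produces a convergent geometric series:
\[
\int_0^{1/2}a(s)\,ds=\sum_{k\ge1}\int_{I_k}a(s)\,ds\;\le\;C^{1/m}\sum_{k\ge1}2^{-(k+1)\left(1-\frac2m\right)}\;=:\;\overline C<\oo,
\]
which is the assertion.

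I do not expect a genuine obstacle here: the only delicate feature is that the series only barely converges and diverges at the borderline $m=2$, so the restriction $m>2$ is sharp (for instance $a(s)=s^{-2/m}$ satisfies $\int_h^{h'}a^m\le C/h$ and lies in $L^1(0,\tfrac12)$ exactly when $m>2$). I would also remark that, read literally, the stated bound is stronger than needed: letting $h\downarrow0$ and $h'\uparrow\tfrac12$ already yields $\int_0^{1/2}a^m\le 2C$, after which a single H\"older step finishes; the dyadic argument above is the version that still works under the weaker bound $\int_h^{h'}a^m\le C/h$, which is the form actually produced in the proof of Proposition~\ref{prop15}.
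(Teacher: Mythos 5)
Your proof is correct and follows essentially the same route as the paper: dyadic decomposition of $(0,1/2)$, H\"older with exponents $m$ and $m/(m-1)$ on each dyadic piece, and a geometric series whose convergence is exactly the condition $m>2$ (your exponent bookkeeping $2^{-(k+1)(m-2)/m}$ is in fact cleaner than the paper's). Your closing remark is also well taken: as literally stated the lemma is trivial via a single H\"older step, and the dyadic argument is what is really needed for the bound $\int_h^{h'}a^m\le C/h$ produced in the proof of Proposition~\ref{prop15}, where on dyadic intervals the two forms of the hypothesis coincide.
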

\begin{proof}
Observe that 
\[\begin{split}
&\int_0^{1/2} a(s)ds=\sum_{n\geq 0} \ds \int_{1/2^{n+1}}^{1/2^n} a(s)ds \leq 
 \sum_{n\geq 0} \Big[\Big(\ds \int_{1/2^{n+1}}^{1/2^n} a^m(s) ds \Big)^{1/m} \Big( 2^{-{n+1}}\Big)^{m-1/m}\Big]\\[1.2mm]
&\leq C\sum_{n\geq 0} {2^{-{(n+1)/m}}} 2^{-(n+1)((m-1)/m)}= C\sum_{n\geq 0} 2^{-n((m-1)/m)},
\end{split}\]
and $\sum_{n\geq 0} 2^{-n((m-1)/m)}<\oo$ if $\frac{m-1}{m}>1/m$, that is, $m>2$.

\end{proof}

%{\bf why do we need the next
%
%
%An immediate consequence of \eqref{takis71} is summarized without proof  in the following corollary.
%\begin{cor}\label{cor1}
%For any family $\tilde \rho_{\ep,\eta} \in C([0,1]; \mathcal P)$ and uniformly in $\ep$ and $\eta$,
%\be\label{takis76}
%\underset{t\to 1}\lim \ds \int_\R \tilde\rho_{\ep,\eta}(x,t) u_{\ep,\eta}(x,t) dx=\ds \int_\R \tilde\rho_{\ep,\eta}(x,1) u_{\ep,\eta}(x,1) dx.
%\ee
%\end{cor}
%}
\vs

The next proposition provides $L^\oo-$ bounds for $u_{\ep, \eta}$ and improves their  available regularity up to and including $t=1$. 
\vs 

\begin{prop}\label{prop100}
As $\ep, \eta\to 0$ while $\ep\leq C\eta^{\alpha}$ and every $\theta \in (0,1)$,  the $u_{\ep,\eta}$'s are bounded in\\ $L^\oo((\theta,1]; L^\oo_{\text{loc}})(\R).$  Moreover, for some uniform $C>0$, 
\be\label{takis71}
\underset{t\in (\theta,1]}\sup \|u_{\ep,\eta}(\cdot,t)\|_{C^{0,\alpha} \text{loc}}  \leq \|u_{\ep,\eta}(\cdot,\theta)\|_{C^{0,\alpha}\text{loc}} +  C.
\ee
Moreover, they converge along subsequences and locally uniformly to a viscosity solution of the Hamilton-Jacobi equation $\pt_t u +\frac{1}{2}(\pt_x u)^2=\rho$ in $\rt$ satisfying .\eqref{takis40.110}.

\end{prop}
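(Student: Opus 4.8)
The plan is to combine the local estimates already obtained in Propositions~\ref{prop11.1}, \ref{prop12} and \ref{prop15} with the structure of the Hamilton-Jacobi equation. First I would upgrade the $L^1_{\mathrm{loc}}$ bound on $u_{\ep,\eta}(\cdot,t)$ to an $L^\infty_{\mathrm{loc}}$ bound for $t\in(\theta,1)$: since $\pt_t u_{\ep,\eta}-\ep\pt_x^2 u_{\ep,\eta}+\tfrac12(\pt_x u_{\ep,\eta})^2=\rho_{\ep,\eta}\geq 0$, the function $u_{\ep,\eta}$ is a subsolution of the linear equation $\pt_t w-\ep\pt_x^2 w=\rho_{\ep,\eta}$; together with the uniform bounds $\int_0^1\!\int_\R\rho_{\ep,\eta}^2\leq C$ from Proposition~\ref{prop11.1} and the uniform $L^2((\theta/2,1);H^1_{\mathrm{loc}})$ bound on $u_{\ep,\eta}$ from Proposition~\ref{prop12}, a standard local parabolic De Giorgi--Moser/Duhamel argument gives an $\ep$- and $\eta$-independent bound on $\sup_{(\theta,1)}\|u_{\ep,\eta}(\cdot,t)\|_{L^\infty_{\mathrm{loc}}}$. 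Once this $L^\infty_{\mathrm{loc}}$ bound is in hand, $\rho_{\ep,\eta}=\pt_t u_{\ep,\eta}-\ep\pt_x^2 u_{\ep,\eta}+\tfrac12(\pt_x u_{\ep,\eta})^2$ and the $\tfrac12(\pt_x u_{\ep,\eta})^2$ term is controlled (after integration in time) by the $H^1_{\mathrm{loc}}$ bound, so interior parabolic regularity for the HJ equation, viewed as $\pt_t u_{\ep,\eta}-\ep\pt_x^2 u_{\ep,\eta}=\rho_{\ep,\eta}-\tfrac12(\pt_x u_{\ep,\eta})^2\in L^2_{\mathrm{loc}}$, yields a uniform $C^{0,\alpha}_{\mathrm{loc}}$ (indeed $W^{2,1}_{2,\mathrm{loc}}\hookrightarrow C^{0,\alpha}$) bound on compact subsets of $\R\times(\theta,1)$.

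The delicate point is to push the $C^{0,\alpha}$ regularity up to and including $t=1$, i.e.\ to establish \eqref{takis71}. Here I would integrate the identity along characteristics / use the comparison principle for the HJ equation directly. Since $u_{\ep,\eta}$ solves $\pt_t u_{\ep,\eta}-\ep\pt_x^2 u_{\ep,\eta}+\tfrac12(\pt_x u_{\ep,\eta})^2=\rho_{\ep,\eta}$ with $\rho_{\ep,\eta}\geq0$, the Hopf--Lax/Oleinik-type representation together with the bound $\int_\theta^1[\rho_{\ep,\eta}(\cdot,t)]_{C^{0,\alpha}}\,dt\leq C$ from Proposition~\ref{prop15} (which, via the continuity equation, also controls $\|\rho_{\ep,\eta}(\cdot,t)\|_{L^\infty}$ in an integrable-in-time way) shows that for $t_0<t_1$ in $(\theta,1]$ the oscillation of $u_{\ep,\eta}(\cdot,t_1)$ is bounded by the oscillation of $u_{\ep,\eta}(\cdot,t_0)$ plus $\int_{t_0}^{t_1}\|\rho_{\ep,\eta}(\cdot,s)\|_{C^{0,\alpha}}\,ds$; letting $t_0\downarrow\theta$ and $t_1\uparrow1$ gives \eqref{takis71}. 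I expect \textbf{this step — obtaining uniform-up-to-$t=1$ Hölder bounds despite $\rho_{\ep,\eta}(\cdot,1)=\text{\boldmath$\delta$}$} — to be the main obstacle, and the crucial input making it work is precisely the surprising bound of Proposition~\ref{prop15} that $[\rho_{\ep,\eta}(\cdot,t)]_{C^{0,\alpha}}$ is integrable in $t$ on $(\theta,1)$.

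Finally, with the uniform $L^\infty_{\mathrm{loc}}$ and $C^{0,\alpha}_{\mathrm{loc}}(\R\times(\theta,1])$ bounds, Arzel\`a--Ascoli gives a subsequence $u_{\ep,\eta}\to u$ locally uniformly on $\R\times(0,1]$, and the stability of viscosity solutions under the vanishing-viscosity limit (using $\ep\to0$ and the weak $L^2$ convergence $\rho_{\ep,\eta}\rightharpoonup\rho$, which is enough to pass to the limit in the HJ equation since $u$ is obtained as a uniform limit and the equation is satisfied in the viscosity sense against the lower semicontinuous envelope of the right-hand side; more carefully, one uses that $\rho_{\ep,\eta}\geq0$ and the one-sided stability, then upgrades using the matching lower bound $u\geq x^2/2t$ from \eqref{takis23.1}) shows $u$ is a viscosity solution of $\pt_t u+\tfrac12(\pt_x u)^2=\rho$ in $\R\times(0,1)$. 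The properties \eqref{takis40.110} then follow by passing to the limit: (i) from \eqref{takis23.1}; (iii) from the uniform $C^{0,\alpha}_{\mathrm{loc}}$ bound just established; (iv) from Proposition~\ref{prop11.1} and weak lower semicontinuity; and (ii) from the uniform bound on $\int_\R\rho_{\ep,\eta}u_{\ep,\eta}\,dx$ together with the lower bound $u_{\ep,\eta}\geq x^2/2(t+\eta)$ and the convergence of the initial penalization, exactly as in the analysis of $t\to0$ carried out for the planning problem.
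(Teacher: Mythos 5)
There is a genuine gap at the very first (and crucial) step. You obtain the uniform $L^\infty_{\mathrm{loc}}$ bound by viewing $u_{\ep,\eta}$ as a subsolution of $\pt_t w-\ep\pt_x^2w=\rho_{\ep,\eta}$ and invoking De Giorgi--Moser (and later $W^{2,1}_{2}$ interior regularity) for this linear parabolic operator. But the diffusion coefficient is $\ep\to0$: all of these estimates have constants that blow up as the ellipticity degenerates (in the limit the inequality $\pt_t w\leq\rho$ gives no spatial regularization whatsoever, and an $L^1_{\mathrm{loc}}$ or $L^2_{\mathrm{loc}}$ bound on $w$ does not control $\sup w$). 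So the bounds you produce are not independent of $\ep$, which is exactly what the proposition requires. The same objection applies to your interior $C^{0,\alpha}$ bound via $W^{2,1}_{2,\mathrm{loc}}\hookrightarrow C^{0,\alpha}$. Any $\ep$-uniform argument must use the nonlinearity, not discard it: this is what the paper does, writing $u_{\ep,\eta}\leq v+\int\|\rho_{\ep,\eta}(\cdot,s)\|_\infty ds$ with $v$ solving the viscous Hamilton--Jacobi equation with data $u_{\ep,\eta}(\cdot,t^{\theta,0}_{\ep,\eta})$ at a well-chosen time level (where the $H^1_{\mathrm{loc}}$ and weighted bounds of Propositions~\ref{prop11.1}--\ref{prop12} hold), and then using the stochastic control representation with the straight-line drift $\alpha=\frac{x_0-x}{t-t_0}$ to get the $\ep$-independent bound $u_{\ep,\eta}(x,t)\leq\frac{(x-x^{\theta,0}_{\ep,\eta})^2}{2(t-t^{\theta,0}_{\ep,\eta})}+C_\theta$; the Gaussian term $\E[u_0(x_0+\sqrt{2\ep}W_\tau)]$ is controlled by the same $H^1$ and weighted integral bounds. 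This quadratic upper bound is also what the paper uses to extend the convergence up to $t=1$, and you never produce it.

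Your second paragraph (propagation of the spatial H\"older seminorm by comparison, using the time-integrability of $[\rho_{\ep,\eta}(\cdot,t)]_{C^{0,\alpha}}$ from Proposition~\ref{prop15}) is sound and $\ep$-uniform, and is in the spirit of what the paper asserts for \eqref{takis71}; but it presupposes the $L^\infty_{\mathrm{loc}}$ (and H\"older at $t=\theta$) control whose derivation is the flawed step above. A further, smaller, gap: for the limit passage you rely on weak $L^2$ convergence of $\rho_{\ep,\eta}$, which is not sufficient for viscosity-solution stability of $\pt_t u+\frac12(\pt_x u)^2=\rho$; the paper uses the strong convergence of the $\rho_{\ep,\eta}$'s in $L^2((0,T);C(\R))$ (coming from Propositions~\ref{prop125} and~\ref{prop15}) precisely to make this step work.
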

\begin{proof}
In view of Proposition~\ref{prop15} and the comment at the beginning of its proof, \eqref{takis71} follows from the classical parabolic theory once we establish an  $L^\oo_{\text{loc}}(\R)$ bound on $u_{\ep,\eta}(\cdot, \theta)\|.$ 

\vs
Proposition~\ref{prop125} yields that, for every $\theta\in (0,1)$,  the $u_{\ep,\eta}$'s are  uniformly bounded in \\
$L^2((\theta,1); H^1_{\text{loc}}(\R))\cap L^\infty((\theta,1); L^1_{\text{loc}}(\R))$.
\vs

Thus, for all  $\theta\in (0,1/2)$, there exists $C_\theta>0$ such that 
\be\label{takis50.2}
\underset{t\in (\theta,1-\theta)} \sup \ds \int_{-R}^R u(x,t) dx + \ds \int_\theta^{1-\theta} \ds \int_\R (\pt_x u)^2(x,t)dxdt  \leq C_\theta R,
\ee

which  implies that, for some $t_{\ep,\eta}^{\theta, 0} \in (\theta/2, \theta)$ and $x_{\ep,\eta}^{\theta,0}\in \R$, % such that 
\be\label{takis50.11}
\ds  \int_{-R}^R u(x,t_{\ep,\eta}^{\theta,0}) dx + \ds  \int_{-R}^R (\pt_x u)^2(x,t_{\ep,\eta}^{\theta,0})\leq C_\theta(R +1),
\ee
and
\be\label{takis50.11}
u(x_{\ep,\eta}^{\theta, 0},t_{\ep,\eta}^{\theta,0}) \leq C_\theta.
\ee
\vs

It also follows from \eqref{takis50.2}, that, for $m>1$, 
\[\ds \int_\theta^{1-\theta} \ds \int_\R \dfrac{(\pt_x u)^2(x,t)}{(1+x^2)^{m/2}} dxdt  \leq C_\theta,\]
which also yields some $t_{\ep,\eta}^{\theta,1}\in (\theta/2, \theta)$ such that 
\be\label{takis51}
\ds \int_\R  \dfrac{u(x,t_{\ep,\eta}^{\theta,1}) + (\pt_x u)^2(x,t_{\ep,\eta}^{\theta,1})}{(1+x^2)^{m/2}} dx \leq C_\theta. 
\ee
%Since $\ds \int_{-1}^1[u(x,t_{\la, 0}) + u_x^2(x,t_{\la,0})] dx$ is  bounded, it also follows that there exists $x_{\la,0} \in (-1,1)$ such that \[u(x_{\la, 0},t_{\la,0}) \leq C_\eta.\]
\vs
Finally, since the $\rho_{\ep,\eta}$'s are  bounded in $L^2((t_{\ep,\eta}^{\theta, 0}, 1-\theta); L^\infty(\R))$, we observe that 
\[u(x,t)\leq v(x,t) + \ds \int_{t_0}^{t} \|\rho(\cdot, s)\|_\infty ds,\]
where $v$ solves
\[\pt_t v-\ep \pt^2_x v + \dfrac{1}{2}( \pt_x v)^2=0 \ \ \text{in} \ \ R\times (t_{\ep,\eta}^{\theta, 0},1-\theta) \ \quad \  v(\cdot,t_{\ep,\eta}^{\theta, 0})=u(\cdot, t_{\ep,\eta}^{\theta, 0}).\]
\vs
Below, to keep the writing simpler we write $u_0$ for $u(\cdot, t_{\ep,\eta}^{\theta, 0}).$
\vs

It follows that, for each progressively measurable $\alpha:(t_{\ep, \eta}^{\theta,0},1-\theta)\to \R$,
\be\label{takis52.1}
u(x,t)\leq\E \Big[\ds \int_{t_{\la,0}}^t \dfrac{1}{2}|\alpha_s|^2 + u_0(X_t)\Big],
\ee
where $dX_s=\alpha_s ds + \sqrt{2 \ep} dW_s$ and $X_{t_{\ep,\eta}^{\theta, 0}}=x$.
\vs

We choose next the control $\alpha=\dfrac{x_0-x}{t-t_{\ep,\eta}^{\theta, 0}}$. Then $dX_s=x_0+ \sqrt{2\ep} dW_s$ and \eqref{takis52.1} yields, for $\tau=t-t_{\ep,\eta}^{\theta, 0}$, 
\be\label{takis53.12}
u(x,t) \leq \dfrac{1}{2}\dfrac{(x-x_0)^2}{t-t_{\ep,\eta}^{\theta, 0}} + \E[u_0(x_0 + \sqrt{2\ep}W_\tau)].
\ee
To conclude we  estimate the last term of the right hand side of the inequality above. It follows from \eqref{takis53.12} that 
\begin{equation}\label{takis54}
\bs
\E[u_0(x_0 + \sqrt{2\ep}(W_t-W_{t_{\ep,\eta}^{\theta, 0}})]&=\dfrac{1}{\sqrt{2\ep\tau}}\ds\int_\R u_0(x_{\ep,\eta}^{\theta, 0}+y) e^{-\frac{y^2}{2\ep\tau}}dy\\[1.5mm]
&\leq u_0(x_{\ep,\eta}^{\theta, 0}) + \dfrac{1}{\sqrt{2\ep\tau}}\ds\int_{|y|\leq 1}|u_0(x_{\ep,\eta}^{\theta, 0}+y)-u_0(x_{\ep,\eta}^{\theta, 0})| e^{-\frac{y^2}{2\ep\tau}}dy.
%\\[1.5mm]
%& + \dfrac{1}{\sqrt{2\ep\tau}}\ds\int_{|y|\geq 1}u_0(x_x_{\ep,\eta}^{\theta, 0}+y)e^{-\frac{y^2}{2\ep\tau}}dy.
\end{split}
\end{equation}
\vs

Since $\ds \int_{\theta}^1 (\pt_x u)^2(x,t_{\ep,\eta}^{\theta,0}) dx$ is bounded, we have, for some uniform $C_\theta>0$,  \[\sup_{|y|\leq 1} |u_0(x_{\ep,\eta}^{\theta, 0}+y)-u_0(x_{\ep,\eta}^{\theta, 0}))| \leq C_\theta.\]
Moreover, as it was shown above, for some $m>1$, $\ds\int_\R \dfrac{u_0(z)}{(1+|z|^2)^{m/2}} dz \leq C_\theta.$
\vs
It then follows that 
\[\dfrac{1}{\sqrt{2\ep\tau}}\ds\int_{|y|\leq 1}|u_0(x_{\ep,\eta}^{\theta, 0}+y)-u_0(x_{\ep,\eta}^{\theta, 0})| e^{-\frac{y^2}{2\ep\tau}}dy \leq C\]
and 
\[\dfrac{1}{\sqrt{2\nu\tau}}\ds\int_{|y|\geq 1}u_0(x_{\ep,\eta}^{\theta, 0}+y)e^{-\frac{y^2}{2\nu\tau}}dy 
\leq \int_{|y|\geq 1}\dfrac{u_0(x_{\ep,\eta}^{\theta, 0}+y)}{(1+|y|^2)^{m/2}} \sup_{|z|\geq 1} [{|z|^m} 
\frac{e^{\frac{-|z|^2}{\sqrt{2\nu\tau}}}}{\sqrt{2\nu\tau}}\;dy\leq C_\theta,\]
\vs
and, hence, for some uniform $C>0$,
\be\label{takis55}
u(x,t)\leq\dfrac{(x-x_{\ep,\eta}^{\theta, 0})^2}{2(t-t_{\ep,\eta}^{\theta, 0})} + C_\theta.
\ee
\vs
\vs

This upper bound and the strong convergence of the $\rho_{\ep,\eta}$' in $L^2((0,T);C(\R))$ imply that the $u_{\ep,\eta}^{\theta, 0}$'s converge locally uniformly to a viscosity solution of \be\label{takis55.1}
\pt_t u +\dfrac{1}{2}(\pt_x u)^2=\rho.
\ee

The convergence  is established in compact subsets of $(0,1)$. To extend it  on compact subsets of $\R\times (0,1]$, It remains to show that \eqref{takis55} holds on $\R\times (0,1]$, which is possible  in view of Proposition~\ref{prop15}.

\end{proof}

The proof above extends to 
\[u(x,t)\leq\dfrac{(x-x_{\ep,\eta}^{\theta, 0})^2}{2(t-t_{\ep,\eta}^{\theta, 0})} + C + F(x,t),\]
where, as $t\to1$, $F(\cdot,t) \to 0$ in $L^2$ which is not enough to conclude. 
\vs

Next we investigate the behavior of $u_{\ep,\eta}$ and $\rho_{\ep,\eta}$ near $t=0$.
\vs

\begin{prop}\label{prop20}
There exists $C>0$ such that, for all $\ep, \eta \to 0$ with $\ep \; \text{log} \; \eta \to 0$  and  for any  nonnegative compactly supported $\chi:\R\to [0,\infty)$, which is smooth in its support $[-a,a]$, there exists, an  independent of $\ep$ and $\eta$,  $e:\R\to \R$ such that $\underset{t\to 0}\lim e(t)=0$ and 
\be\label{takis80}
\underset{\ep\to 0, \eta\to 0, \ep \;\text{log} \; \eta \to 0}\limsup \big[\ds \int_R u_{\ep, \eta}(x,t) \dfrac{1}{t^{2/3}}\chi(\dfrac{x}{t^{2/3}})dx \big]\leq \ds \int_R \dfrac{x^2}{2t} \dfrac{1}{t^{2/3}}\chi(\dfrac{x}{t^{2/3}}) dx + e(t).
\ee 
\end{prop}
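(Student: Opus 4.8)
The plan is to dominate $u_{\ep,\eta}$ by the explicit solution of the source-free equation plus a remainder that only sees $\rho_{\ep,\eta}$ as a forcing, and then to show that this remainder, once averaged against the rescaled bump $\mu_t(dx):=t^{-2/3}\chi(x/t^{2/3})\,dx$, is small. First I would use the function $w(x,t)=\dfrac{x^2}{2(\eta+t)}+\ep\big(\log(\eta+t)-\log\eta\big)$ from \eqref{takis160}, which solves $\pt_tw-\ep\pt^2_xw+\tfrac12(\pt_xw)^2=0$ with $w(\cdot,0)=|x|^2/2\eta$, together with the solution $R=R_{\ep,\eta}$ of the linear problem $\pt_tR-\ep\pt^2_xR+(\pt_xw)\pt_xR=\rho_{\ep,\eta}$ in $\rt$, $R(\cdot,0)=0$. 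Since $\tfrac12(\pt_xw+\pt_xR)^2=\tfrac12(\pt_xw)^2+(\pt_xw)\pt_xR+\tfrac12(\pt_xR)^2$, the function $w+R$ is a supersolution of $\pt_tv-\ep\pt^2_xv+\tfrac12(\pt_xv)^2=\rho_{\ep,\eta}$ with the correct initial data, so by comparison $u_{\ep,\eta}\le w+R$. As $\pt_xw(x,t)=x/(\eta+t)$, Feynman--Kac represents the linear $R$ as $R(x,t)=\E\big[\int_0^t\rho_{\ep,\eta}(X^*_s,t-s)\,ds\big]$, where $dX^*_s=-\dfrac{X^*_s}{\eta+t-s}\,ds+\sqrt{2\ep}\,dW_s$, $X^*_0=x$; explicitly $X^*_s=\dfrac{\eta+t-s}{\eta+t}X^*_0+G_s$ with $G_s$ independent of $X^*_0$ and Gaussian of variance $2\ep\,\dfrac{s(\eta+t-s)}{\eta+t}$.

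Integrating $u_{\ep,\eta}\le w+R$ against $\mu_t$, i.e. taking $X^*_0\sim\mu_t$ independent of $W$, and using $w(x,t)\le \dfrac{x^2}{2t}+\ep\log\dfrac{\eta+t}{\eta}$, gives
\[
\int_\R u_{\ep,\eta}(x,t)\,\mu_t(dx)\ \le\ \int_\R\dfrac{x^2}{2t}\,\mu_t(dx)\ +\ \Big(\ep\log\dfrac{\eta+t}{\eta}\Big)\|\chi\|_{L^1}\ +\ R_{\ep,\eta}(t),
\]
with $R_{\ep,\eta}(t)=\E\big[\int_0^t\rho_{\ep,\eta}(X^*_s,t-s)\,ds\big]$ and $X^*_0\sim\mu_t$. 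The middle term tends to $0$ under the hypothesis $\ep\log\eta\to0$, so the proposition follows once I exhibit $e_0(t)\to0$ with $\limsup_{\ep,\eta\to0}R_{\ep,\eta}(t)\le e_0(t)$.

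For the remainder, writing $p_{\ep,\eta}(\cdot,s)$ for the law of $X^*_s$ and substituting $r=t-s$,
\[
R_{\ep,\eta}(t)=\int_0^t\!\!\int_\R\rho_{\ep,\eta}(y,r)\,p_{\ep,\eta}(y,t-r)\,dy\,dr .
\]
From the formula for $X^*_s$, $p_{\ep,\eta}(\cdot,t-r)$ is an explicit Gaussian mixture concentrated at scale $\gtrsim (\eta+r)\,t^{-1/3}$, with $\|p_{\ep,\eta}(\cdot,t-r)\|_{L^1}=\|\chi\|_{L^1}$ and $\|p_{\ep,\eta}(\cdot,t-r)\|_{L^2}^2\lesssim t^{1/3}/(\eta+r)$. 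I would split $\int_0^t dr$ at $r=t/2$. On $[t/2,t]$ the density $\rho_{\ep,\eta}(\cdot,r)$ is uniformly (in $\ep,\eta$) Hölder by Proposition~\ref{prop15}; combining this with the elementary estimate $\|f\|_{L^\infty}\lesssim[f]_{C^{0,\alpha}}^{1/(1+\alpha)}$ for a probability density $f$ and Hölder's inequality in $r$, this part contributes $O(t^{\gamma})$ for some $\gamma=\gamma(\alpha)>0$. On $[0,t/2]$ one bounds $\int_\R\rho_{\ep,\eta}(y,r)p_{\ep,\eta}(y,t-r)\,dy\le\|\rho_{\ep,\eta}(\cdot,r)\|_{L^2}\,\|p_{\ep,\eta}(\cdot,t-r)\|_{L^2}$ and inserts the uniform energy bound $\int_0^1\|\rho_{\ep,\eta}(\cdot,r)\|_{L^2}^2\,dr\le C$ of Proposition~\ref{prop11.1}, keeping track of the weights so that the outcome is again a power of $t$; then one lets $\ep,\eta\to0$ and finally $t\to0$, and defines $e(t)$ to be the resulting explicit bound.

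The delicate point — and the reason this is not routine — is precisely this last step: $\rho_{\ep,\eta}(\cdot,r)$ carries no pointwise bound uniform in $\ep,\eta$ as $r\to0$, while $p_{\ep,\eta}(\cdot,t-r)$ also concentrates, so the crude estimate $\int\rho_{\ep,\eta}(\cdot,r)p_{\ep,\eta}(\cdot,t-r)\le\|p_{\ep,\eta}(\cdot,t-r)\|_{L^\infty}$ produces a divergent factor $\log(1/\eta)$ (and the $L^2$ estimate a divergent $\ep^{-1/2}$). Overcoming this requires exploiting simultaneously the spacetime $L^2$-bound on $\rho_{\ep,\eta}$, the time-integrated Hölder bound of Proposition~\ref{prop15}, the second-moment bound $\int_\R\rho_{\ep,\eta}(\cdot,r)\,x^2\,dx\lesssim \eta+r$, and the fact that $X^*$ stays spread at the scale $(\eta+r)t^{-1/3}$ it inherits from $\mu_t$; heuristically these force $R_{\ep,\eta}(t)\approx\int_0^t\ol\rho(0,r)\,dr=O(t^{1/3})$, which is the origin of the error $e(t)\to0$.
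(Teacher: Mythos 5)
Your reduction $u_{\ep,\eta}\le w+R$ is sound ($w+R$ is indeed a supersolution because the cross term $(\pt_xw)(\pt_xR)$ reproduces the linear drift equation for $R$ and $\tfrac12(\pt_xR)^2\ge0$), and the Feynman--Kac formula for $R$ along $dX^*_s=-\frac{X^*_s}{\eta+t-s}ds+\sqrt{2\ep}\,dW_s$ is correct. The genuine gap is exactly the step you call ``the delicate point'': the bound $\limsup R_{\ep,\eta}(t)\le e_0(t)$ is never proved, and the tools you cite cannot prove it. The measure $p_{\ep,\eta}(\cdot,t-r)$ is transported by the \emph{linear} contraction of the characteristics of $\pt_xw=x/(\eta+t)$ and therefore concentrates at scale $(\eta+r)t^{-1/3}$, far below the self-similar scale $(\eta+r)^{2/3}$; consequently every pairing with $\rho_{\ep,\eta}(\cdot,r)$ that uses only mass conservation or the spacetime $L^2$ bound leaves a factor $\log\frac{\eta+t}{\eta}$ or $\big(\log\frac{\eta+t}{\eta}\big)^{1/2}$ (your own $[0,t/2]$ computation gives $C\big(t^{1/3}\log\frac{\eta+t}{\eta}\big)^{1/2}$, which is not a power of $t$ uniformly in $\eta$). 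On $[t/2,t]$ the appeal to Proposition~\ref{prop15} is also unjustified: that proposition controls $\int_\theta^1[\rho_{\ep,\eta}(\cdot,s)]_{C^{0,\alpha}}ds$ only for \emph{fixed} $\theta\in(0,1)$ with a $\theta$-dependent constant, so it gives nothing uniform on $[t/2,t]$ as $t\to0$. What would close your argument is a uniform pointwise bound $\|\rho_{\ep,\eta}(\cdot,r)\|_\infty\le C(r+\eta)^{-2/3}$, which is not available; your closing sentence (``heuristically these force $R_{\ep,\eta}(t)\approx\int_0^t\ol\rho(0,r)\,dr$'') substitutes the expected limiting profile for a proof.

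The paper's proof avoids the obstruction by never transporting a weight along the contracting characteristics. It writes $u_{\ep,\eta}=w+v$ with $v\ge0$ solving the full equation with drift $\frac{x}{t+\eta}\pt_xv$, source $\rho_{\ep,\eta}$ and $v(\cdot,0)=0$, and tests against $\chi_\eta(x,s)=(s+\eta)^{-2/3}\chi\big(x/(s+\eta)^{2/3}\big)$, a weight \emph{forced} to spread at the self-similar rate $(s+\eta)^{2/3}$. The source term is then $\int_\R\rho_{\ep,\eta}\chi_\eta\,dx\le\|\chi\|_\infty(s+\eta)^{-2/3}$ by mass conservation alone, which integrates to $Ct^{1/3}$ and produces $e(t)$; the price of imposing this spreading is the commutator terms $\int v\,\pt_t\chi_\eta$ and $\int\frac{x}{s+\eta}\pt_xv\,\chi_\eta$, which are absorbed into the dissipation $\tfrac12\int(\pt_xv)^2\chi_\eta$ coming from the nonlinearity, while the viscous term is controlled via $\int_\R v\,dx\le(t+\eta)\log\frac{t+\eta}{\eta}$ and the interpolation $\|v\|_\infty\le c\|v\|_1^{1/3}\|\pt_xv\|_2^{2/3}$, using $\ep\log\eta\to0$. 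To salvage your route you would have to replace the Feynman--Kac weight by one constrained to spread at scale $(\eta+r)^{2/3}$ and estimate the resulting defect --- which is the paper's computation in disguise.
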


\begin{proof}
Like in previous proofs we write $u$ and $\rho$ in place of $u_{\ep,\eta}$ and $\rho_{\ep,\eta}$.
\vs

Recall \eqref{takis160} and 
%Since $u_{\ep,\eta}(x,0)\geq \dfrac{x^2}{2\eta}$ and $\rho_{\ep,\eta}\geq 0$, we have $\pt_t u-\ep \pt^2_x u + \dfrac{1}{2} (\pt_x u)^2\geq 0$ and, hence,
%\be\label{81}
%\[u(x,t)\geq \dfrac{x^2}{2(t+\eta)} + \ep (\log(t+\eta) -\log \eta)),\]
%\ee
observe that  $\underset{t\to 0} \lim \ep (\log(t+\eta) -\log \eta)=0$ uniformly on $\ep$ and $\eta$  in view of the assumption on $\ep$ and $\eta$.
\vs

Set 
\[u(x,t)=\dfrac{x^2}{2(t+\eta)} +  \ep (\log(t+\eta) -\log \eta)) +v(x,t).\]
\vs

 It is immediate that  %$v(\cdot,0)=0$,  $v\geq 0$ and
 \vs 
 \be\label{takis190}
 \pt_t v -\ep \pt_x^2 v +\dfrac{1}{2} (\pt_x v)^2 + \dfrac{x}{t+\eta} \pt_x v=0 \ \text{and} \ v\geq 0  \ \text{in} \ \ \rt \  \text{and} \  v(\cdot,0)=0.
 \ee
%\end{proof}
\vs

Given $\chi$ as in the statement, set $\chi_\eta(x,t)=\dfrac{1}{(t+\eta)^{2/3}} \chi (\dfrac{x}{(t+\eta)^{2/3}})$.
\vs

Then 
\vs
\be\label{takis81}
\begin{split}
\dfrac{d}{dt}\ds \int_\R v\chi_\eta dx- &\ds \int_\R v \pt_t\chi_\eta dx + \dfrac{1}{2}\ds \int_\R (\pt_x v)^2\chi_\eta dx  + \ds \int_\R \dfrac{x}{t+\eta} \pt_x v \chi_\eta dx\\[1.2mm]
  %&+ \ep v(a (t+\eta)^{4/3},t) \dfrac{1}{(t+\eta)^{4/3}} \chi'(\dfrac{a}{(t+\eta)^{2/3}}) \\[1.2mm]
 % & - v(-a (t+\eta)^{4/3},t) \dfrac{1}{(t+\eta)^{4/3} }\chi'(-\dfrac{a}{(t+\eta)^{2/3}})  ] \\[1.2mm]
&  -  \ep \ds\int_\R  \pt_x^2 v \chi_\eta dx  =\ds \int_\R \rho \chi_\eta \leq \dfrac{C}{(t+\eta)^{2/3}},
\end{split}
\ee
\vs
the last inequality following from the facts that $\chi$ is bounded and $\int_R\rho(x,t) dx=1$.
\vs

Next we analyze several of the terms in the left hand side of \eqref{takis81} beginning with the last one. 
\vs

Integrating by parts and using that $|\chi '| \leq C$  and $|\chi '' |\leq C$ in $[-a,a]$, we find  
\[
\begin{split}
|\ds\int_\R  \pt_x^2 v \chi_\eta dx| &\leq C \dfrac{1}{(t+\eta)^{4/3}} \|v(\cdot,x)\|_\oo +| \ds\int_\R  v \pt_x^2 \chi_\eta dx|\\[1.2mm]
& \leq C \Big[ \dfrac{1}{(t+\eta)^{4/3}} \|v(\cdot,t)\|_\oo + |\ds\int_{-a(t+\eta)^{4/3}}^ {a(t+\eta)^{4/3}} v \pt_x^2 \chi_\eta dx| \Big]\\[1.2mm]
& \leq C  \dfrac{1}{(t+\eta)^{4/3}} \|v(\cdot,t)\|_\oo.
\end{split}
\]
%
%\[=- \ds\int_\R  \pt_x v \pt_x \chi_\eta dx\\[1.2mm]
%& =  -\big[ v(a (t+\eta)^{4/3},t) \dfrac{1}{(t+\eta)^{4/3}} \chi'(\dfrac{a}{(t+\eta)^{2/3}}) \\[1.2mm]\big]
%%\end{split}
%\]
\vs
It also follows from \eqref{takis190} that
\[\dfrac{d}{dt} \ds \int_R v dx - \dfrac{1}{t+\eta} \ds  \int_R vdx + \ds \int_\R (\pt_x v)^2 =1,\]
and, thus, since $\int_\R \rho dx=1$, 
\[\dfrac{d}{dt} \Big( \dfrac{1}{t+\eta} \ds  \int_R vdx + \dfrac{1}{t+\eta} \ds  \int_R  (\pt_x v)^2 dx\Big)=\dfrac{1}{t+\eta},
\]
which then yields
\be\label{takis191}
 \ds \int_\R v(x,t) dx \leq (t+\eta) \text{log} \dfrac{t+\eta}{\eta},
\ee
and 
\be\label{takis192} 
\ds \int_0^t\dfrac{1}{s+\eta}\ds \int_\R (\pt_x v)^2(x,s) dx ds\leq  \text{log} \dfrac{t+\eta}{\eta}.
\ee
\vs
Using next the   imbedding $\|w\| \leq c \|w\|^{1/3}\|\pt_x w\|^{2/3}$, we find from \eqref{takis191}  that, for each $t\in (0,1)$, 
\[
\| v(\cdot,t)\|_\oo \leq c \Big((t+\eta) \text{log} \dfrac{t+\eta}{\eta}\Big)^{1/3} \|\pt_x v(\cdot,t) \|^{2/3},
\]
and, therefore, in view of \eqref{takis192},
\[\ds\int_0^t \dfrac{1}{(s+\eta)^2}\dfrac{1}{\text{log} \dfrac{s+\eta}{\eta}} \|v(\cdot,s)\|^3 ds \leq c\; \text{log} \dfrac{t+\eta}{\eta}. \]
Then,
\[
\bs
\ep \ds\int_0^t \dfrac{1}{(s+\eta)^{4/3}}\|v(\cdot,s)\|_\oo ds& =\ep \ds\int_0^t \big[\dfrac{1}{(s+\eta)^{2/3}}\dfrac{1}{(\text{log} \dfrac{s+\eta}{\eta})^{1/3}} \|v(\cdot,s)\|\big] \big[\dfrac{1}{(s+\eta)^{2/3}} (\text{log} \dfrac{s+\eta}{\eta})^{1/3}\big]ds 
\\[1.2mm]
&\leq c\ep \big(\text{log}\dfrac{t+\eta}{\eta} \big)^{1/3}
\Big( \ds \int_0^t \dfrac{1}{s+\eta} \big( \text{log} \dfrac{s+\eta}{\eta}\big)^{1/2}ds)\Big)^{2/3} \\[1.2mm]
&\leq  c \ep \Big(\text{log} \frac{t+\eta}{\eta}\Big)^{2/3}\leq  c \ep \text{log} \frac{t+\eta}{\eta}, %\\[1.2mm]
%&\leq  c \ep \text{log} \frac{t+\eta}{\eta}. 
\end{split}
\]
and, thus, 
\be\label{takis193}
\underset{\ep, \eta, \ep \text{log} \eta \to 0} \limsup \big[\ep|\ds \int_0^t \int_\R \pt^2_x v \chi_\eta dx ds|\big] \leq 0.
%c \ep \text{log} \frac{s+\eta}{\eta}
\ee

%\[\begin{split}\ep |\ds\int_\R \pt_x v \pt_x \chi_\eta dx| &\leq  \dfrac{1}{4} \ds \int_R \chi_\eta (\pt_x v)^2 dx + \ep^2  \ds \vs\int_\R \dfrac{(\pt_x \chi_\eta )^2}{\chi_\eta} dx \\[1.2mm]
%&\leq  \dfrac{1}{4} \ds \int_R \chi_\eta (\pt_x v)^2 dx + \dfrac{\ep^2}{(t+\eta)^{4/3}} \ds\int_\R \dfrac{(\chi')^2}{\chi}(\dfrac{x}{(t+\eta)^{2/3}})\dfrac{1}{(t+\eta)^{2/3}} dx\\[1.2mm]
%&\leq  \dfrac{1}{4} \ds \int_R \chi_\eta (\pt_x v)^2 dx +C\dfrac{\ep^2}{(t+\eta)^{4/3}}. 
% \end{split}
%  \]
\vs

We continue with the second term in \eqref{takis81} and obtain the following sequence of equalities and inequalities. 

\[
\begin{split}
\ds \int_\R v \pt_t\chi_\eta &= -\dfrac{2}{3}\dfrac{1}{(t+\eta)}\ds\int_\R v\chi_\eta dx - \dfrac{2}{3} \dfrac{1}{(t+\eta)^{2/3} }\ds\int_\R v \chi'(\dfrac{x}{(t+\eta)^{2/3}})\dfrac{x}{(t+\eta)^{5/3}} dx\\[1.2mm]
&= -\dfrac{2}{3}\dfrac{1}{(t+\eta)}\Big[ \ds\int_\R v\chi_\eta dx + \ds\int_\R v \pt_x \chi_\eta x dx \Big]\\[1.2mm]
& = -\dfrac{2}{3}\dfrac{1}{(t+\eta)} \Big[   \ds\int_\R v\chi_\eta dx - \ds\int_\R v\chi_\eta dx -  \ds\int_\R  \pt_x  v \chi_\eta x dx \Big]\\[1.2mm]
& = \dfrac{2}{3}\dfrac{1}{(t+\eta)}  \ds\int_\R  \pt_x  v \chi_\eta x dx\\[1.2mm]
& \leq \dfrac{2}{3} \Big[ \dfrac{1}{8} \ds\int_\R ( \pt_x  v)^2 \chi_\eta dx + 2 \ds\int_\R \chi_\eta(\frac{x}{(t+\eta)^{2/3} })\big(\dfrac{x^2}{(t+\eta)^2}\big) dx \Big]\\[1.2mm]
& \leq \dfrac{2}{3} \Big[ \dfrac{1}{8} \ds\int_\R ( \pt_x  v)^2 \chi_\eta dx +2 \dfrac{1}{(t+\eta)^{2/3}} \ds\int_\R \chi_\eta (\dfrac{x}{(t+\eta)^{2/3}}) \Big(\dfrac{x}{(t+\eta)^{2/3})} \Big)^2 dx \Big]\\[1.2mm]
& \leq \dfrac{2}{3} \Big[ \dfrac{1}{8} \ds\int_\R ( \pt_x  v)^2 \chi_\eta dx + 2C\dfrac{1}{(t+\eta)^{2/3}}\Big]. 
\end{split}
\]        
\vs
%
%Inserting the last two estimates in \eqref{takis81} we find
%\vs

%\be\label{takis82}
%\dfrac{d}{dt}\ds \int_\R v\chi_\eta dx +\dfrac{1}{8} \ds \int_\R (\pt_x v)^2\chi_\eta dx + \ds \int_\R \dfrac{x}{t+\eta} \pt_x v \chi_\eta dx \leq C\Big[\dfrac{1}{(t+\eta)^{2/3}} + \dfrac{\ep^2}{(t+\eta)^{4/3}}\Big].
%\ee
%\vs
%\be\label{takis82}
%\dfrac{d}{dt}\ds \int_\R v\chi_\eta dx +\dfrac{1}{8} \ds \int_\R (\pt_x v)^2\chi_\eta dx + \ds \int_\R \dfrac{x}{t+\eta} \pt_x v \chi_\eta dx \leq C\Big[\dfrac{1}{(t+\eta)^{2/3}} + \ep \text{log} \frac{t+\eta}{\eta}].
%\ee
%\vs

Finally, since 
\vs
\[%\begin{split} 
| \ds \int_\R \dfrac{x}{t+\eta} \pt_x v \chi_\eta dx | \leq \dfrac{1}{8} \ds \int_\R (\pt_x v)^2\chi_\eta dx + 2 \ds \int_\R \dfrac{x^2}{(t+\eta)^2} \chi_\eta dx \leq \dfrac{1}{8} \ds \int_\R (\pt_x v)^2\chi_\eta dx  + C\dfrac{1}{(t+\eta)^{2/3}},\]
%\\[1.2mm]
%& \leq \dfrac{1}{8} \ds \int_\R (\pt_x v)^2\chi_\eta dx  + C\dfrac{1}{(t+\eta)^{2/3}},
%\end{split} 
%                                                                                                                                                                                                                    
\vs

we find %from \eqref{takis82} that 
\vs

\[
\ds \int_\R (v\chi_\eta)(x,t) dx \leq C\Big[ \ds \int_0^t \dfrac{1}{(s+\eta)^{2/3}} ds + \ep|\ds \int_0^t \int_\R \pt^2_x v \chi_\eta dx ds|  \Big],
\]
and, hence,

\vs
\be\label{takis83}
\underset{\ep, \eta, \ep \text{log} \eta \to 0}\limsup \ds \int_\R v\chi_\eta (x,t) dx \leq  C t^{1/3},
%\Big((t+\eta)^{1/3} + \dfrac{\ep^2}{(t+\eta)^{1/3}}\Big)\leq C \Big( (t+\eta)^{1/3} + \ep^2 \eta^{-1/3}\Big). 
\ee
and the claim follows.

% \vs
% 
%It follows from the definition of $v$ that 
%
%\[\begin{split}
%\ds \int_R u(x,t) \chi_\eta (x,t) dx&\leq \ds \int \dfrac{x^2}{2(t+\eta)} + \ep[\log(t+\eta) -\log \eta + C \Big((t+\eta)^{1/3} + \dfrac{\ep^2}{(t+\eta)^{1/3}}\Big)\\[1.2mm]
%&\leq C \Big( (t+\eta)^{1/3} + \ep^2 \eta^{-1/3}\Big).
%\end{split}\]
%\vs
%
%The conclusion now follows after letting  $\ep, \eta, \ep^2\eta^{-1/3} \to 0$.

\end{proof}

\section{The convergence}

We are now ready to prove the main result about the convergence of $(u_{\ep,\eta}, \rho_{\ep,\eta})$ to $(\ol u,\rho)$. 
\vs

Although it is possible to show, based on the interior estimates, the local uniform convergence in $\R\times (0,1]$ of the $u_{\ep,\eta}$'s  to $\ol u$, the arguments so far do not allow to prove the convergence of the  $\rho_{\ep, \eta}$'s to $\ol \rho$ in $L^2(\rt)$ instead of  only $L^2_{\text{loc}}(0,1); L^2(\R)).$ 
\vs

We develop here a different argument comparing directly  $(u_{\ep,\eta}, \rho_{\ep,\eta})$ and  $(\ol u,\rho)$ using the classical mean field games identity. We need, however, to deal with the fact that $\ol \rho$ is only Lipschitz continuous and not twice differentiable. 
\vs
We also mention that we can complete the proof assuming that $\ep \text{log} \eta\to 0$. However, since we need to to have  $\ep\leq C\eta^\alpha$ for some $\alpha \in (0,1)$ to obtain the a priori bounds on $\ds \int_0^1 \ds \int \rho_{\ep,\eta}^2 dx dt$ and $\ds \int_0^1 \ds \int_\R \rho_{\ep,\eta}( \pt_x u_{\ep,\eta})^2 dxdt$, we state next the main result under this assumption. 
%\vs
%The result is stated next.

\begin{thm}\label{thm1}
As $\ep \to 0$ and $\eta\to 0 $,  while $\ep\leq A\eta^\alpha$ for some $A>0$, 
\[u_{\ep,\eta} \to \ol u \ \ \text{locally uniformly in $\R\times (0,1]$},  \quad \rho_{\ep,\eta} \to \ol \rho \ \ \text{in} \ \ L^2(\rt) \quad \text{and} \quad I_{\ep,\eta} \to \ol I.\]
%and, moreover, $I_{\ep,\eta} \to \ol I$.  
\end{thm}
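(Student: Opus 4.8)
The plan is to upgrade the qualitative convergence coming from the a priori estimates above to global $L^2$ convergence of the densities by running the classical mean field games duality computation directly on $(u_{\ep,\eta},\rho_{\ep,\eta})$ against the $\eta$-regularized planning solution $(\ol u_\eta,\ol\rho_\eta)$ of \eqref{takis200.0}. \emph{First}, I would extract, along subsequences, limits $u_{\ep,\eta}\to u^\ast$ locally uniformly in $\R\times(0,1]$ (Proposition~\ref{prop100}) and $\rho_{\ep,\eta}\rightharpoonup\rho^\ast$ weakly in $L^2(\rt)$ (the uniform $L^2$ bound of Propositions~\ref{prop10} and \ref{prop11.1}). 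One checks that $(u^\ast,\rho^\ast)$ solves \eqref{takis2} and satisfies \eqref{takis40.110}: (i) by letting $\eta,\ep\to0$ in \eqref{takis23.1} (using $\ep\log(1/\eta)\to0$ under $\ep\le A\eta^\alpha$), (iii) from \eqref{takis71}, (iv) from Proposition~\ref{prop11.1} and weak lower semicontinuity of the convex action $(\rho,\beta)\mapsto\int\beta^2/\rho$, and (ii) from Proposition~\ref{prop20}. The uniqueness Theorem~\ref{thm10} then forces $u^\ast=\ol u$, $\rho^\ast=\ol\rho$, so the whole family converges; in particular $u_{\ep,\eta}\to\ol u$ locally uniformly in $\R\times(0,1]$.

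\emph{Next}, fix $\theta\in(0,1/2)$. Subtracting the two systems and using the strict convexity of $(\rho,\beta)\mapsto\frac12(\rho^2+\beta^2/\rho)$ exactly as in the proof of Theorem~\ref{thm10}, but retaining the $\ep$-terms, I would establish, for $0<\theta'<1-\theta$,
\[
\int_{\theta'}^{1-\theta}\!\!\int_\R\Big[(\rho_{\ep,\eta}-\ol\rho_\eta)^2+\tfrac12(\rho_{\ep,\eta}+\ol\rho_\eta)(\pt_x u_{\ep,\eta}-\pt_x\ol u_\eta)^2\Big]dx\,dt=B_{\ep,\eta}(\theta',\theta)+\ep\,\mathcal E_{\ep,\eta}(\theta',\theta),
\]
where $B_{\ep,\eta}(\theta',\theta)=\big[\int_\R(\rho_{\ep,\eta}-\ol\rho_\eta)(u_{\ep,\eta}-\ol u_\eta)\,dx\big]_{t=\theta'}^{t=1-\theta}$ and $\mathcal E_{\ep,\eta}(\theta',\theta)=\int_{\theta'}^{1-\theta}\int_\R(\pt_x\ol\rho_\eta\,\pt_x u_{\ep,\eta}-\pt_x\rho_{\ep,\eta}\,\pt_x\ol u_\eta)\,dx\,dt$. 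Because $\ol\rho_\eta$ is only Lipschitz in $x$ and $\ol u_\eta$ only $C^1$, this computation must use a single integration by parts in each dissipative term, so that only $\pt_x\ol\rho_\eta$ and $\pt_x\ol u_\eta$ (not their second derivatives) appear; alternatively one first mollifies $\ol\rho_\eta$ and passes to the limit. The decisive gain from comparing with the $\eta$-regularized solution rather than with $(\ol u,\ol\rho)$ is that, letting $\theta'\to0$, the boundary term at $\theta'$ \emph{vanishes identically} because $u_{\ep,\eta}(\cdot,0)=\ol u_\eta(\cdot,0)=x^2/2\eta$.

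For each fixed $\theta$, $\ep\,\mathcal E_{\ep,\eta}\to0$: indeed $|\pt_x\ol\rho_\eta|\le C$ with $\supp\ol\rho_\eta(\cdot,t)$ contained in a fixed bounded interval, $\pt_x u_{\ep,\eta}$ is bounded in $L^2((\eta,1-\theta);L^2_{\text{loc}})$ (Proposition~\ref{prop12}), $\pt_x\rho_{\ep,\eta}$ in $L^2((\theta',1-\theta);L^2(\R))$ (Proposition~\ref{prop125}), and $\pt_x\ol u_\eta$ grows at most linearly in $x$ there, so one integration by parts against the uniform second-moment bound of Proposition~\ref{prop11.1} closes the estimate (near $t=0$ one uses instead the splitting $u_{\ep,\eta}=\frac{x^2}{2(t+\eta)}+\ep\log\frac{t+\eta}{\eta}+v$ with \eqref{takis191}--\eqref{takis192}, noting $\supp\ol\rho_\eta(\cdot,t)$ has length $O((t+\eta)^{2/3})$). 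The boundary term at $t=1-\theta$ also tends to $0$: by the first step and \eqref{takis700}, $(u_{\ep,\eta}-\ol u_\eta)(\cdot,1-\theta)\to0$ locally uniformly, while $(\rho_{\ep,\eta}-\ol\rho_\eta)(\cdot,1-\theta)$ is bounded in $L^1$ with uniformly controlled tails (from $\sup\int x^2\rho_{\ep,\eta}(\cdot,t)\,dx\le C$, $\sup\int\rho_{\ep,\eta}u_{\ep,\eta}\,dx\le C$ in Proposition~\ref{prop11.1}, the at most quadratic growth of $u_{\ep,\eta}-\ol u_\eta$ from \eqref{takis55} and the explicit form of $\ol u_\eta$). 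Hence $\limsup_{\ep,\eta\to0}\int_0^{1-\theta}\int_\R(\rho_{\ep,\eta}-\ol\rho_\eta)^2\,dx\,dt\le0$ for every $\theta$; since $\|\ol\rho_\eta(\cdot,t)\|_{L^2_x}^2\le C\big(t^{-2/3}\wedge(1-t)^{-2/3}\big)$ and $\ol\rho_\eta(\cdot,t)\to\ol\rho(\cdot,t)$ in $L^2_x$ for each $t$, dominated convergence gives $\ol\rho_\eta\to\ol\rho$ in $L^2(\rt)$, hence $\rho_{\ep,\eta}\to\ol\rho$ in $L^2((0,1-\theta)\times\R)$ for every $\theta$. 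The strip $(1-\theta,1)$ is handled uniformly in $\ep,\eta$ by combining the energy identity $\frac{d}{dt}\int_\R\rho_{\ep,\eta}u_{\ep,\eta}\,dx=\int_\R[\rho_{\ep,\eta}^2+\frac12\rho_{\ep,\eta}(\pt_x u_{\ep,\eta})^2]\,dx$ with the decay $\int_\R x^2\rho_{\ep,\eta}(\cdot,1-\theta)\,dx\le C\theta$ (continuity equation tested against $x^2$, integrated from $1-\theta$ to $1$, using $\rho_{\ep,\eta}(\cdot,1)=\text{\boldmath$\delta$}$ and $\int_0^1\int\rho_{\ep,\eta}(\pt_x u_{\ep,\eta})^2<\infty$), which forces $\int_{1-\theta}^1\int_\R[\rho_{\ep,\eta}^2+\rho_{\ep,\eta}(\pt_x u_{\ep,\eta})^2]\,dx\,dt\to0$ uniformly as $\theta\to0$; altogether $\rho_{\ep,\eta}\to\ol\rho$ in $L^2(\rt)$.

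\emph{Finally}, $\liminf I_{\ep,\eta}\ge\ol I$ is Proposition~\ref{prop10}(ii). Writing $I_{\ep,\eta}=\int_\R\rho_{\ep,\eta}(x,0)\frac{x^2}{2\eta}\,dx+\int_0^1\int_\R[\rho_{\ep,\eta}^2+\frac12\rho_{\ep,\eta}(\pt_x u_{\ep,\eta})^2]\,dx\,dt$, the $L^2$ convergence just proved gives $\int\rho_{\ep,\eta}^2\to\int\ol\rho^2$, and the identity above (via the cross-term $\rho_{\ep,\eta}(\pt_x u_{\ep,\eta})^2=\rho_{\ep,\eta}(\pt_x\ol u_\eta)^2+\rho_{\ep,\eta}(\pt_x u_{\ep,\eta}-\pt_x\ol u_\eta)(\pt_x u_{\ep,\eta}+\pt_x\ol u_\eta)$ plus the uniform control near $t=1$) gives $\int\rho_{\ep,\eta}(\pt_x u_{\ep,\eta})^2\to\int\ol\rho(\pt_x\ol u)^2$, so the last two terms of $I_{\ep,\eta}$ converge to $\ol I$. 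It then remains to see that the penalization $\int_\R\rho_{\ep,\eta}(x,0)\frac{x^2}{2\eta}\,dx\to0$, which I would get by producing a competitor for $I_{\ep,\eta}$ out of $\ol\rho_\eta$ with its corner smoothed on the scale $l_\eta(0)\sim\eta$: its penalization is $O(\eta)$, its action converges to $\ol I$, and the extra $\ep$-terms created upon inserting it into the $\ep$-continuity equation are $o(1)$ once $\ep\le A\eta^\alpha$; this gives $\limsup I_{\ep,\eta}\le\ol I$, hence $I_{\ep,\eta}\to\ol I$ and $\int_\R\rho_{\ep,\eta}(x,0)\frac{x^2}{2\eta}\,dx\to0$. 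The genuine obstacle throughout is the degeneration at $t=0,1$, where $\ol\rho,\ol\rho_\eta$ collapse to Dirac masses, $\ol u,\ol u_\eta$ blow up, and the $\ep$-weighted derivative terms and second moments are only borderline integrable in time; this is exactly where the delicate near-endpoint estimates (Propositions~\ref{prop125}, \ref{prop15}, \ref{prop20}) are indispensable, and the merely Lipschitz regularity of $\ol\rho$ is what dictates the one-integration-by-parts form of the identity.
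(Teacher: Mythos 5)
Your proposal is correct and follows essentially the same route as the paper: the classical MFG duality identity computed against the $\eta$-regularized first-order solution $(\ol u_\eta,\ol\rho_\eta)$, with the boundary terms killed by the matched data $u_{\ep,\eta}(\cdot,0)=\ol u_\eta(\cdot,0)=x^2/2\eta$ and $\rho_{\ep,\eta}(\cdot,1)=\ol\rho_\eta(\cdot,1)=\text{\boldmath$\delta$}$, and the $\ep$-weighted error split over $(0,h)$, $(h,1-h)$, $(1-h,1)$ and controlled exactly as in the paper (energy bound on $\ep\pt_x^2u_{\ep,\eta}$ near $t=1$, $H^1$ bounds in the middle, and the decomposition $u_{\ep,\eta}=\tfrac{x^2}{2(t+\eta)}+\ep\log\tfrac{t+\eta}{\eta}+v$ with the interpolation estimate on $\|v\|_\infty$ near $t=0$). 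The only differences are cosmetic or in your favor: you integrate to $1-\theta$ and pass to the limit in the boundary term where the paper integrates to $t=1$ outright, and you explicitly dispose of the penalization term $\int\rho_{\ep,\eta}(x,0)\tfrac{x^2}{2\eta}\,dx$ in $I_{\ep,\eta}$, a point the paper's own proof of the convergence of $I_{\ep,\eta}$ leaves implicit.
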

\begin{proof}
The proof of the convergence is based on comparing $(u_{\ep,\eta}, \rho_{\ep,\eta})$ and  the solution $(\ol u_{\eta}, \ol \rho_{\eta})$ 
of the approximate first-order \eqref{takis200.0}, for which know that, as $\eta\to 0$, $\ol u_{\eta}\to \ol u$ locally  uniformly in $\R\times (0,1[]$   and $\ol \rho_{\eta} \to \ol \rho$ in $L^2(\rt).$
\vs

%In what follows to ease the notation we write $u$ and $\rho$ for $u_{\ep,\eta}$ and $\rho_{\ep,\eta}$ and $\ol u$ and $\ol \rho$ for $\ol u_\eta$ and $\ol \rho_\eta$.
%\vs
The standard computation for mean field type systems gives
\[\bs
&\dfrac{d}{dt} \ds\int_\R (u_{\ep,\eta}-\ol u_\eta)(\rho_{\ep,\eta}-\ol \rho_\eta)dx\\[1.2mm]
&= \ds\int_\R \big( |\rho_{\ep,\eta}-\ol \rho_\eta |^2 + \dfrac{\rho_{\ep,\eta} + \ol \rho_\eta}{2}|\pt_xu_{\ep,\eta}-\pt_x \ol u_\eta|^2 \big) dx dt + \ep \ds \int_R \big(  \pt_x^2 \rho (u_{\ep,\eta}-\ol u_\eta) - \pt_x^2 u(\rho_{\ep,\eta}-\ol \rho_\eta)\big) dx,
\end{split}
\]
\vs
and, since $u_{\ep,\eta}$ is smooth,  after integrating by parts
\be\label{takis300}
\bs
& \dfrac{d}{dt} \ds\int_\R (u_{\ep,\eta}-\ol u_\eta)(\rho_{\ep,\eta}-\ol \rho_\eta)(x,t) dx\\[1.2mm]
 &= \ds\int_\R \big( |\rho_{\ep,\eta}-\ol \rho_\eta |^2 + \dfrac{\rho_{\ep,\eta} + \ol \rho_\eta}{2}|\pt_xu_{\ep,\eta}-\pt_x \ol u_\eta|^2 \big) dx dt + \ep \ds \int_R \big(  \pt_x^2 u_{\ep,\eta}  \ol \rho_\eta - \rho_{\ep,\eta} \pt_x^2 \ol u_\eta\big) dx. 
 \end{split}
\ee
\vs
Integrating \eqref{takis300} over $t\in (0,1)$ -- recall that both $u_{\ep, \eta}$ and $\ol u_\eta$ are continuous in $\R\times [0,1]$ so that integration at $t=0$ does not create any problem --we get 
\be\label{takis301}
\bs
&\ds \int_\R (u_{\ep,\eta}-\ol u_\eta)(\rho_{\ep,\eta}-\ol \rho_\eta)(x,1) dx - \ds \int_\R (u_{\ep,\eta}-\ol u_\eta)(\rho_{\ep,\eta}-\ol \rho_\eta)(x,0) dx\\[1.2mm]
&= \ds \int_0^1\ds \int_\R \big( |\rho_{\ep,\eta}-\ol \rho_\eta |^2 + \dfrac{\rho_{\ep,\eta} + \ol \rho_\eta}{2}|\pt_xu_{\ep,\eta}-\pt_x \ol u_\eta|^2 \big) dx dt + \ep\ds \int_0^1\ds \int_\R \big(  \pt_x^2 u_{\ep,\eta}  \ol \rho_\eta - \rho_{\ep,\eta} \pt_x^2 \ol u_\eta\big) dx,
\end{split}
\ee
\vs
and, thus, 
\be\label{takis302}
\bs
\ds \int_0^1\ds \int_\R \big( |\rho_{\ep,\eta}-\ol \rho_\eta |^2 + &\dfrac{\rho_{\ep,\eta}+ \ol \rho_\eta}{2}|\pt_xu_{\ep,\eta}-\pt_x \ol u_\eta|^2 \big) dx dt\\[1.2mm]
&=\ds \int_\R (u_{\ep,\eta}-\ol u_\eta)(\rho_{\ep,\eta}-\ol \rho_\eta)(x,1) dx - \ds \int_\R (u_{\ep,\eta}-\ol u_\eta)(\rho-\ol \rho_\eta)(x,0) dx \\[1.2mm]
&+  \ep\ds \int_0^1\ds \int_\R \big( \rho_{\ep,\eta} \pt_x^2 \ol u_\eta - \pt_x^2 u_{\ep,\eta}  \ol \rho_\eta \big) dx.
\end{split}
\ee
\vs 
The claim about the convergence of the $u_{\ep,\eta}$'s and $\rho_{\ep,\eta}$'s  will follow, if we show that that the right hand side of \eqref{takis302} tends to $0$ as $\ep, \eta\to 0$ while 
$\ep\leq A \eta^\alpha$, and we proceed to prove this.
\vs

First we observe that, since $\rho_{\ep,\eta}(\cdot,1)=\ol \rho_\eta(\cdot,1)=\text{\boldmath $ \delta$}$ and $u_{\ep.\eta}(x,0)=\ol u_\eta(x,0)=x^2/(2\eta)$,
\be\label{takis303}
\ds \int_\R (u_{\ep,\eta}-\ol u_\eta)(\rho_{\ep,\eta}-\ol \rho_\eta)(x,1) dx- \ds \int_\R (u_{\ep,\eta}-\ol u_\eta)(\rho_{\ep,\eta}-\ol \rho_\eta)(x,0) dx=0.
\ee
\vs

Thus, to conclude we must show that 
\be\label{takis304}
\underset{\ep\to 0, \eta\to 0, \ep\leq A \eta^\alpha}\lim  \ep\ds \int_0^1\ds \int_\R \big( \rho_{\ep,\eta} \pt_x^2 \ol u_\eta - \pt_x^2 u_{\ep,\eta}  \ol \rho_\eta \big) dx=0.
\ee

In view of the construction of $\ol u_\eta$, we know that $\pt_x^2 \ol u_\eta(x,t) \approx 1/t+\eta$, and, hence, since $\int_\R \rho_{\ep,\eta}(x,t) dx=1$,
\be\label{takis305}
\ep\ds \int_0^1\ds \int_\R  \rho_{\ep,\eta} (x,t) \pt_x^2 \ol u_{\eta} (x,t) dx dt \approx \ep \int_0^1 \dfrac{1}{t+\eta} dt=\ep \big[ \text{log} \big(1+\eta) - \text{log} \eta \big],
\ee
and 
\be\label{takis306}
\underset{\ep\to 0, \eta\to 0, \ep\leq A \eta^\alpha} \lim \ep \big[ \text{log} \big(1+\eta) - \text{log} \eta \big]=0.
\ee
\vs
We turn now to the term $\ep \displaystyle \int_0^1\int_\R \pt_x^2 u_{\ep,\eta}(x,t) \ol \rho_\eta (x,t) dx dt$.  Its  analysis is more complicated consisting on splitting the time integration on integrals over $(0,h), (h,1-h)$ and $(1-h,1)$ and using different arguments for each one. 
\vs
We begin with the integral over $(1-h, 1)$ and recall that $\ol \rho_\eta$'s are  supported in $[-r_{\eta,0}, r_{\eta,0} ] \times [0,1]$, where $r_{\eta,0}$ is the minimum of $r_\eta$.  Moreover, in view of Proposition~\ref{prop12},  the $u_{\ep,\eta}$'s are uniformly bounded in $ L^2((\theta,1); H^1({J_\eta})$ with $J_\eta=[-r_{\eta,0}, r_{\eta,0} ]$ and $\theta\in (0,1)$.

 \vs

In view of the uniform bound of the $u$'s in $ L^2((1/4, 1/2); H^1({J_\eta})$, there exists some $t_{\ep,\eta} \in (1/4,1/2)$ such that, for some $C>0$,
\be\label{takis1100} \|\pt_x u_{\ep,\eta}(\cdot, t_{\ep,\eta})\|^2\leq C.\ee

 \vs   
Next we multiply the $u_{\ep,\eta}$ equation by $\ep \pt_x^2 u_{\ep,\eta}$ and integrate over $\R \times (t_{\ep,\eta},1)$  to get 
 \[
 \bs
   \dfrac{\ep}{2} \int_\R & (\pt_x u_{\ep,\eta})^2(x,1) dx +  \|\ep \pt^2_x u_{\ep,\eta}\|^2_{L^2(J_\eta \times(t_{\ep,\eta},1))}\\[1.2mm]
&  \leq \|\rho_{\ep,\eta}\|_{L^2(\R\times(1/2,1))}
    \|\ep \pt^2_x u_{\ep,\eta}\|_{L^(J_\eta \times(t_{\ep,\eta},1))} + \dfrac{\ep}{2} \int_{J_\eta} (\pt_x u_{\ep,\eta})^2(x,t_{\ep,\eta}) dx. 
\end{split}
\]
It then follows from \eqref{takis1100} that, for some $C>0$, 
\[\|\ep \pt^2_x u_{\ep,\eta}\|^2_{L^2(J_\eta \times(1/2,1))}\leq  \|\ep \pt^2_x u_{\ep,\eta}\|_{L(J_\eta \times(t_{\ep,\eta},1))} \leq C.\]
\vs
Then
\[\bs
\ep |\ds\int_{1-h}^1 \ds \int_\R \ol \rho_\eta \pt_x^2 &u_{\ep,\eta} dx dt| \leq \big(\ds \int_{1-h}^1\ds\int_\R\ol \rho_\eta^2 dx dt\big)^{1/2}
dt\big)^{1/2}\big(\ds \int_{1-h}^1\ds\int_\R \ep^2 (\pt_x^2 u_{\ep,\eta})^2 dx dt\big)^{1/2}\\[1.2mm]
& \leq C\big(\ds \int_{1-h}^1\ds\int_\R\ol \rho_{\ep,\eta}^2 dx dt\big)^{1/2}=\text{o}(1), 
\end{split}
\]
the last claim following from the fact that the $\ol \rho$'s are uniformly bounded in $L^2(\rt)$. 
\vs

For the integral over $(h,1-h)$ we observe that, since $u_{\ep,\eta}$ is smooth and $\ol \rho_{\eta}$ is Lipschitz and has compact support,
\[
\bs
\ep\large \vert \ds \int_{1-h}^1\ds \int_\R \pt_x^2 u_{\ep,\eta}(x,t) \ol \rho_{\eta}(x,t) dx dt\big |&=\ep \big | \int_{1-h}^1\ds \int_\R \pt_x u_{\ep,\eta}(x,t) \pt_x\ol \rho_{\eta}(x,t)dx dt\big |\\[1.2mm]
& \leq \ep C  \Big( \ds \int_h^{1-h}  \|u_{\ep,\eta}(\cdot,t)\|^2_{H^1(J_\eta)} dt + \ds\int_h^{1-h} \ds\int_\R (\pt_x \ol \rho_{\eta})^2 dx dt\Big),
\end{split}
\]
%where $r_{\eta,0}$ is the minimum of $r_\eta$, 
and, hence, 
\be\label{takis307}
\underset{\ep\to 0}\lim \; \ep \ds \int_{1-h}^1\ds \int_\R \pt_x^2 u_{\ep,\eta}(x,t) \ol \rho_{\eta}(x,t) dx dt=0 \ \ \text{uniformly in $h\in (0,1/2)$},
\ee
since, in view of Proposition~\ref{prop12} and Proposition~\ref{prop125},  for each $\theta\in (0,1/2)$,  $u_{\ep,\eta}\in L^2((\theta,1); H^1_{\text{loc}})$ and $\ol \rho_{\eta}\in L^2((\theta,1-\theta); H^1)$.   
\vs

%To  investigate the behavior of $\ds\int_h^{1-h} \ds \int_\R \pt^2_x u \rho dx dt$, we recall that $\ep \pt_x u$ is bounded in $H^1$, $\ep \pt_x u\to 0$ in $L^2$ and $\ol \rho \in C^{0,1/2}((0,1); H^s)$ for all $s<1$. 
%\vs
%
%It then follows that {\bf is the following correct?}
%\[|\ds \int_\R \pt_x^2 u \ol \rho dx dt|\leq \|\ol \rho\|_{H^s} \|\ep \pt_x  u\|_{L^2}\]
%and, hence, the claim. 
%\vs

To conclude, we need to investigate the behavior of $\ep \ds \int_0^h \ds\int_\R \pt_x^2 u_{\ep,\eta} \ol \rho_{\eta} dx dt, $ and for this
we recall that, if 
\[u_{\ep,\eta}(x,t)=v_{\ep,\eta}(x,t) + \dfrac{x^2}{2(t+\eta)} + \ep \text{log}\big(\dfrac{t+\eta}{\eta} \big),\]
then 
\[\pt_t v_{\ep,\eta} + \dfrac{1}{2} (\pt_x v_{\ep,\eta})^2 -\ep \pt_x^2 v_{\ep,\eta} + \dfrac{x}{t+\eta} \pt_x v_{\ep,\eta}=\rho_{\ep,\eta} \ \ \ \text{and} \ \ \ v_{\ep,\eta}(\cdot,0)=0 \ \ \text{and} \ \ v_{\ep,\eta}\geq 0.\]
Then,
\[\bs 
\ep\;\ds \int_0^h \ds \int_\R \pt_x^2 u_{\ep,\eta} \ol \rho_{\eta} dx dt&=\ep\; \ds \int_0^h \ds \int_\R \pt_x^2 v_{\ep,\eta} \ol \rho_{\eta} dx dt +
\ep\;\ds \int_0^h \ds \int_\R \dfrac{1}{t+\eta} \ol \rho_{\eta} dx dt\\[1.2mm]
&=\ep \; \ds \int_0^h \ds \int_\R \pt_x^2 v_{\ep,\eta} \ol \rho_{\eta} dx dt + \text{log} \big(\dfrac{t+\eta}{\eta}\big).
\end{split}
\]
To finish,  we need to estimate $\ep \; \ds \int_0^h \ds \int_\R \pt_x^2 v_{\ep,\eta} \ol \rho_{\eta} dx dt$.
\vs

Recalling that  $\ol \rho_{\eta}=r_\eta(t)\big(1-\dfrac{x^2}{l_\eta^2(t)}\big)_+$,  we find
%omitting the dependence on $\eta$ below, 

\[\bs
\ep\ds \int_0^h \ds &\int_\R \pt_x^2 v_{\ep,\eta} \ol \rho_{\eta} dx dt = \ep \; \ds \int_0^h \ds \int_\R \pt_x v_{\ep,\eta} \pt_x \ol \rho_{\eta} dx dt \\[1.2mm]
&= \ep \; \ds \int_0^h \big[ -2 v_{\ep,\eta}(l_\eta(t),t)\dfrac{r_\eta(t)}{l_\eta (t)} + 2  v_{\ep,\eta}(-l_\eta(t),t)\dfrac{r_\eta(t)}{l_\eta(t)} dt + 2\ep \; \ds \int_0^h \dfrac{r_\eta(t)}{l_\eta^2(t)} \ds \int_{\{|x|\leq l_\eta(t)\}} v_{\ep,\eta}(x,t) dx dt,
\end{split}
\]
and, hence, in view of the asymptotic behavior of the $r_\eta$'s and $l_\eta$'s near $t=0$, 
\[|\ep \;\ds \int_0^h \ds \int_\R \pt_x^2 v_{\ep,\eta} \ol \rho_\eta dx dt | \leq C\ds \int_0^h \dfrac{1}{(t+\eta)^{4/3}} \|v_{\ep,\eta}(\cdot,t)\|_\oo dt.
\]
\vs 
Finally,  we need to estimate the right hand side of this last inequality, which have already done in the proof of Proposition~\ref{prop20}. %section~2 of the paper.
\vs

It follows from the equation satisfied by $v_{\ep,\eta}$  that 
\[\dfrac{d}{dt} \Big( \dfrac{1}{t+\eta} \ds \int_\R v_{\ep,\eta}(x,t) dx\Big) + \dfrac{1}{2(t+\eta)} \ds \int_\R (\pt_x v_{\ep,\eta})^2(x,t) dx=\dfrac{1}{t+\eta},\]
and, hence, 
\be\label{takis310}
\ds \int_\R v_{\ep,\eta}(x,t) dx \leq (t+\eta) \text{log} \big(\dfrac{t+\eta}{\eta}\big),
\ee
and 
\be\label{takis312}
\ds \int_0^t \ds \dfrac{1}{s+\eta} \ds \int_\R (\pt_x v_{\ep,\eta})^2(x,s) dx ds \leq  \text{log} \big(\dfrac{t+\eta}{\eta}\big).
\ee

\vs
Using next the   imbedding $\|w\| \leq c \|w\|^{1/3}\|\pt_x w\|^{2/3}$, we find from \eqref{takis310}  that, for each $t\in (0,1)$, 
\[
\| v_{\ep,\eta}(\cdot,t)\|_\oo \leq c \Big((t+\eta) \text{log} \dfrac{t+\eta}{\eta}\Big)^{1/3} \|\pt_x v_{\ep,\eta}(\cdot,t) \|^{2/3},
\]
and, therefore, in view of \eqref{takis312},
\[\ds\int_0^t \dfrac{1}{(s+\eta)^2}\dfrac{1}{\text{log} \dfrac{s+\eta}{\eta}} \|v_{\ep,\eta}(\cdot,s)\|^3 ds \leq c\; \text{log} \dfrac{t+\eta}{\eta}. \]
Then, for some $c>0$, 
\[
\bs
\ep \ds\int_0^t &\dfrac{1}{(s+\eta)^{4/3}}\|v_{\ep,\eta}(\cdot,s)\|_\oo ds\\[1.2mm]
& =\ep \ds\int_0^t \big[\dfrac{1}{(s+\eta)^{2/3}}\dfrac{1}{(\text{log} \dfrac{s+\eta}{\eta})^{1/3}} \|v_{\ep,\eta}(\cdot,s)\|\big] \big[\dfrac{1}{(s+\eta)^{2/3}} (\text{log} \dfrac{s+\eta}{\eta})^{1/3}\big]ds 
\\[1.2mm]
&\leq c\ep \big(\text{log}\dfrac{t+\eta}{\eta} \big)^{1/3}
\Big( \ds \int_0^t \dfrac{1}{s+\eta} \big( \text{log} \dfrac{s+\eta}{\eta}\big)^{1/2}ds)\Big)^{2/3} \\[1.2mm]
&\leq  c \ep \Big(\text{log} \frac{t+\eta}{\eta}\Big)^{2/3}\leq  c \ep \text{log} \frac{t+\eta}{\eta}.
%&\leq  c \ep \text{log} \frac{t+\eta}{\eta}. 
\end{split}
\]
Summarizing the above arguments we find that 

\[|\ep \ds \int_0^h \ds \int_\R \pt_x^2 u_{\ep,\eta} \ol \rho_{\eta} dx dt| \leq C \ep \text{log}\big(\dfrac{h+\eta}{\eta}\big),\]

and, since $\ep\leq A\eta^\alpha$, 
\[\underset{\ep\to0, \eta\to 0, \eta\leq A \eta^\alpha} \lim |\ep \ds \int_0^h \ds \int_\R \pt_x^2 u_{\ep,\eta} \ol \rho_{\eta} dx dt|\leq 
\underset{\ep\to0, \eta\to 0, \eta\leq A \eta^\alpha} \lim C \ep \text{log}\big(\dfrac{h+\eta}{\eta}\big)=0.\]
\vs

The final item to show is the convergence of the $I_{\ep,\eta}$'s. This follows from the facts  that, 
\[ \underset{\ep\to0, \eta\to 0, \eta\leq A \eta^\alpha} \lim \ds \int_0^1 \int_\R \big(\rho_{\ep,\eta}-\ol \rho_\eta\big )^2 dx dt =0 \ \ \text{and} \ \ \underset{\eta\to 0} \lim \ds \int_0^1 \int_\R \big(\ol \rho_{\eta}-\ol \rho\big )^2 dx dt =0,\]
\vs
and the observation that
\[\underset{\ep\to0, \eta\to 0, \eta\leq A \eta^\alpha} \lim \ds \int_0^1 \int_\R (\rho_{\ep,\eta}+\ol \rho_\eta)\Big(\pt_x u_{\ep,\eta} - \pt_x \ol u_\eta\Big)^2 dx dt=0\]
and 
\[\underset{\eta\to 0} \lim \ds \int_0^1 \int_\R \Big(\ol \rho_\eta \big(\pt_x \ol u_{\eta}\big)^2 - \ol \rho \big( \pt_x \ol u\big)^2\Big)dx dt=0  \]
yield that
\[\underset{\ep\to0, \eta\to 0, \eta\leq A \eta^\alpha} \lim \ds \int_0^1 \int_\R \Big(\rho_{\ep,\eta} \big(\pt_x u_{\ep,\eta})^2 -  \ol \rho \big(\pt_x \ol u)^2 )^2\Big) dx dt=0.\]

\end{proof}

\begin{section}{The KPZ asymptotics}

As it was discussed in the introduction the original motivation of this work was to provide a complete and rigorous proof to the large deviations-type (LDP) approximation for the KPZ equation for large heights and sharp-wedge initial condition, a topic that has attracted considerable attention in the math physics community. 
\vs

%In the math physics community there has been considerable interest in understanding the LDP theory for the KPZ 
%for initially flat and parabolic interfaces. Here we will concentrate on the later, that is, we assume 
%\be\label{takis2.2}
%h(x,0)=|x|^2/L.
%\ee
%%we remark about this choice later below.
%\vs

%In the sequel, we write $h_\ep$ for the solution of \eqref{takis2} and we discuss the large deviation theory (LDP) or WNT.  
We begin with a short and informal introduction to the problem. 
\vs

We consider the following KPZ problem 
\be\label{takis1200}
\pt_t h -\pt_x^2 h+ \dfrac{1}{2}(\pt_x h)^2=\sqrt{\theta} \xi \ \ \text{in} \ \ \rt \ \  \ \ h(x,0)=x^2/2L.
%&h(x,0)=x^2/2L. 
%\end{split}
\ee

\vs
For any $\rho \in L^2(\R\times (0,T))$, it is known  that 
\[\P(\sqrt{\theta} \xi \approx \rho) \approx 
%\dfrac{1}{2 \ep}
\|\rho\|^2_2= \displaystyle \int_0^1 \int_\R \rho^2(x,t)\; dx dt.\]
%, where 
%\[\|\rho\|^2_2=\displaystyle \int_0^1 \int_\R \rho^2(x,t)\; dx dt.\] 
Then, in $\{\sqrt{\theta} \xi \approx \rho\}$, the height  $h$ is approximated by 
\be\label{takis}
\pt_t h -\partial_{x}^2 h + \dfrac{1}{2}(\partial_x h)^2 = \rho \ \ \text{in} \ \ \R\times(0,1] \ \  \ h(x,0)=|x|^2/L.
\ee
%with
%\be\label{takis4}
%h(x,0)=|x|^2/L \ \ \text{and} \ \ h(0,1)=\lambda.
%\ee
\vs
The LDP theory says that, for large heights and sharp wedge initial condition,  $h_\theta$ satisfies a large deviation principle  if, as $\la\to \oo$ and $L\to 0$, 
%\be\label{takis5}
\[h \approx \exp \big[- I(\lambda, L) \big]=\exp \big[- I +\text{o}(1)\big] ,\]
%\ee
where 
\vs
\be\label{takis4.1}
\bs
I(\la,L)= & \sup  \Big\{-\frac{1}{2} \|\rho \|^2_2\\[1.2mm]
& = -\frac{1}{2}\displaystyle \int_0^1 \int_\R ( \pt_t h-\dfrac{1}{2}\pt_{xx}h + \dfrac{1}{2}(\pt_x h)^2)^2 dx dt  : \; h \; \text{ solves \eqref{takis} and $h(0,1)=\la$} \Big\}.
\end{split}
\ee
%with speed $1/\theta$ and rate 
%\be\label{takis5}
%I(f) =\inf \{\frac{1}{2} \|\rho \|^2_2: \; u(\cdot,\rho)=f\}. 
%\ee
A  variational argument  shows that a minimizer in \eqref{takis4.1} solves the system
\be\label{takis5}
\bc
\pt_t u^{\la,L} - \pt^2_{x} u^{\la,L} + \dfrac{1}{2} (\pt_{x} u^{\la,L})^2=\rho^{\la,L}  \;\; \text{in} \;\; \R\times (0,1],\\[1mm]
\pt_t \rho^{\la,L} + \pt^2_{x} \rho^{\la,L}+\pt_x(\pt_x u^{\la,L} \rho^{\la,L})=0 \;\; \text{in} \;\; \R\times [0,1), \\[1mm]
\rho^{\la,L}(\cdot,1)=c(\lambda) {\text{\boldmath ${\delta}$}} \ \ u^{\la,L} (0,1)=\lambda \ \ u^{\la,L}(x,0)=|x|^2/L.
\ec
\ee
%on $\dfrac{1}{2}\|\rho\|^2=\dfrac{1}{2}\int_0^2\int_{\R} \rho^(x,t) dx dt$ with he constraint $u_\lambda (0,1)=1/\delta$ 
%yields that the minimizer $\rho$ in \eqref{takis5} solves the backwards Fokker-Planck equation
%\be\label{takis5}
%\pt_t \rho=-\dfrac{1}{2} \pt^2_{x} \rho -\pt_x(u \rho) \;\; \text{in} \;\; \R\times [0,1),
%\rho^\la(x,1)=c(\lambda) \bf{\delta}(x) \ \ u^\la (0,1)=\lambda \ \ u^\la(0,0)=0
%\ee
%and, in view of the constraint $u(0,1)=\lambda$, 
%\be\label{takis6}
%\rho(x,1)=c(\lambda) \delta(x).
%\ee
\vs
The terminal value of $\rho^{\la,L}$  is due to the constraint that $u^{\la,L} (0,1)=\lambda$ and $c(\lambda)$ is an appropriate  constant. 
\vs
%
%In the math physics community there has been considerable interest in understanding the LDP theory for the KPZ 
%for initially flat and parabolic interfaces. Here we will concentrate on the later, that is 

The limiting behavior of $I(\la,L)$ as $\la \to \oo$ and $L\to 0$ is related to the behavior of the solution of the KPZ equation for  $\lambda\to \infty$ and $L\to 0$. 
\vs

To understand this behavior it is convenient to introduce the  rescaling 
$$u_{\lambda,L}(x,t)=\dfrac{1}{ \lambda} u^{\lambda,L} (\lambda^{1/2} x,t) \ \ \text{and} \ \ \rho_{\lambda,L}(x,t)=\dfrac{1}{ \lambda} \rho^{\lambda,L} (\lambda^{1/2} x,t),$$ 
which leads to the forward-backwards system 
\be\label{takis7}
\bs
&\pt_t u_{\lambda,L} - \pt^2_{x}u_{\lambda,L} + \dfrac{1}{2}(\partial_x u_{\lambda,L})^2 = \rho{\lambda,L}  \; \; \text{in} \; \; \R\times (0,1), \\[1.5mm]
&\pt_t \rho_{\lambda,L} + \pt^2_{x} \rho_{\lambda,L}+ \pt_x(u_{\lambda,L} \rho_{\lambda,L})=0 \;\; \text{in} \;\; \R\times (0,1),\\[1.5mm]
&u_{\lambda,L}(x,0)=|x|^2/L , \;\; \rho_{\lambda,L}(\cdot,1)=\lambda^{-2/3} c(\lambda) \text{\boldmath $\delta $} \ \ \text{and} \ \ u_{\lambda,L}(0,1)=1.
\end{split}
\ee
\vs
%The question is to understand the behavior of $u_{L,\la}$ and $\rho_{L,\la}$ as $L\to 0$ and $\la\to \oo$. 
%\vs

We show next how to obtain the. $\la\to \oo, L\to 0$ behavior of $u_{\la,L}$ and $\rho_{\la,L}$ using what was already established in the paper.

\vs 

We choose $a>0$ such that the ``physics solution'' $\ol u_a$, which corresponds to \eqref{takis2} with $\ol \rho(\cdot,1)=a\text{\boldmath $\delta$}$, has the property that $\ol u_a(0,1)=1$. This can be accomplished by the appropriate choices of $r$, $k$ and $a$. To simplify the notation, in what follows we assume that $a=1$.
\vs

We consider now, for $\ep,\eta>0$ such that $\ep\leq A\eta^\alpha$ the solution $(u_{\ep,\eta}, \rho_{\ep,\eta})$ of \eqref{takis1}, and  recall that 
%\be\label{takis401}
%\bs
%&\pt_t u_{\ep,\eta} - \ep \pt_x^2 u_{\ep,\eta} + \dfrac{1}{2} (\pt_x u_{\ep,\eta})^2=\rho_{\ep,\eta}  \ \ \text{in} \ \ \rt,\\[1.2mm]
%&\pt_t \rho_{\ep,\eta} + \ep \pt_x^2 \rho_{\ep,\eta} + \pt_x (\rho_{\ep, \eta} \pt_x u_{\ep,\eta}=0  \ \ \text{in} \ \ \rt,\\[1.2mm]
%& u_{\ep,\eta}(x,0)={x^2}/{2\eta} \ \ \ \rho_{\ep,\eta}(\cdot,1)=\text{\boldmath $\delta$}.
%\end{split}
%\ee
we have shown that
%, as $\ep,\eta\to 0$ while $\ep\leq A\eta^\alpha$, 
$(u_{\ep,\eta}, \rho_{\ep,\eta})$ converges in the appropriate sense to the solution $(\ol u,\ol \rho)$ of \eqref{takis2}. 
\vs 
Let $A(\ep,\eta)=u_{\ep,\eta}(0,1)$. It follows from the convergence result that 
\be\label{takis402}
\underset{\ep\to 0, \eta\to 0, \ep\leq A\eta^\alpha} \lim A(\ep,\eta)=1.
\ee
\vs

Scaling \eqref{takis1} by $\mu>0$ changes $(u_{\ep,\eta}, \rho_{\ep,\eta})$ to a new solution $(u_{\ep',\eta}, \rho_{\ep',\eta})$ for  $\ep'=\ep \mu^\alpha$ with $\rho_{\ep,\eta}(\cdot,1)=\mu^\beta \text{\boldmath $\delta$}$ and $u_{\ep',\eta}(0,1)=A(\ep,\eta)\mu^\gamma$.
\vs
We choose $\mu=A(\ep,\eta)^{-1/\gamma} \to 1$ as $\ep\to 0, \eta\to 0, \ep\leq A \eta^\alpha$. This gives a solution to the KPZ system for $\eta>0$, $\ep'=\ep \mu^\alpha= A(\ep,\eta)^{-\alpha/\gamma}$; note that $\underset{\ep\to 0,\eta\to 0, \ep\leq A\eta^\alpha}\lim \ep'/\ep=1. $
\vs

Finally, since obviously $A(\ep,\eta)$  is continuous in $\ep$ for each $\eta$, to obtain a solution of \eqref{takis1}, we choose $\ep=\ep'r$ and want to solve for a solution such that $A(\ep' r,\eta)^{-\alpha/\gamma}=1$. 
\vs

Observe next that,  for any $\theta, \theta'\in (0,1)$, 
\[\underset{\ep'\to 0, \eta\to 0, \ep'\leq A\eta^\alpha} \lim \theta A(\ep' \theta, \eta)=\theta<1 \ \text{and} \ \underset{\ep'\to 0, \eta\to 0, \ep'\leq A\eta^\alpha} \lim  \dfrac{1}{\theta'}A(\ep' \dfrac{1}{\theta'}, \eta)=\dfrac{1}{\theta'}>1.\]
\vs

Then the claimed  existence follows from a standard continuity argument.
\vs

Note that 
\[\rho(\cdot,1)=\mu^\beta\delta=A(\ep,\eta)^{-\beta/\gamma}\text{\boldmath $\delta$} \to c_0\text{\boldmath $\delta$}\]
which implies that 
\[ \underset{\la \to \oo} \lim \la^{-2/3} c(\la) \to c_0.\]
\vskip.075in

{\bf Statements and Declarations.} The authors state that there are no conflicts of interest. Data sharing is not applicable to this article as no datasets were generated
or analyzed during the current study.

\vskip.5in

\end{document}

\end{document}

\end{document}

It follows from \eqref{takis11} that, for $r_0=r(1/2)=(3\pi/16)^{2/3},$

\be\label{takis13}
a=\bc \dfrac{8}{3}r\sqrt{r-r_0} \; \text{if} \; t\in(0,1/2),\\[1mm]
-\dfrac{8}{3}r\sqrt{r-r_0} \; \; \text{if}  \; \;  t\in(1/2,1),\ec \ \ \text{and} \ \   \dot r=\bc - \dfrac{8}{3}r^2\sqrt{r-r_0} \; \;  \text{if}  \; \;  t\in(0,1/2),\\[1mm]
\dfrac{8}{3}r^2\sqrt{r-r_0}  \; \;  \text{if} \; \; t\in(1/2,1),\\[1mm]
\ec
\ee

and 
\be\label{takis14}
t=t(r)=\dfrac{3 \sqrt{r-r_0}}{8 r r_0} + \dfrac{2}{\pi} \arctan (\sqrt{\dfrac{r}{r_0} -1}). 
\ee
\vs

Outside $\{(x,t): |x|\leq l(t) \}$, $u$ is constructed  by the method of characteristics by connecting each $(x,t) \in \R\times (0,1) \setminus \overline {\mathcal O} $ by a straight line tangent  to the $\partial \mathcal O= \{(x,t): |x|= l(t\}$ and then moving along  $\partial \mathcal O$, 
\vs


\begin{thebibliography}{99}

\bibitem {ACQ}  G. Amir, I. Corwin, and J. Quastel, Commun. Pure Appl. Math.
64, 466 (2011).



\bibitem {CC19} M. Cafasso and T. Claeys. A Riemann-Hilbert approach to the lower tail of the KPZ equation. arXiv:1910.02493, 2019.

\bibitem {CLD} P. Calabrese and P. Le Doussal, Phys. Rev. Lett. 106,
250603 (2011); P. Le Doussal and P. Calabrese, J. Stat.
Mech. (2012) P0600.

\bibitem {CLDR} P. Calabrese, P. Le Doussal, and A. Rosso, Europhys. Lett. 90,
20002 (2010).

\bibitem{CW} A. Chandra and H. Weber. Stochastic PDEs, regularity structures, and interacting particle systems. Annales de la faculté des sciences de Toulouse Mathematiques,  26, 847--909, 2017.




\bibitem {IS} T. Imamura and T. Sasamoto, Phys. Rev. Lett. 108, 190603
(2012); J. Stat. Phys. 150, 908 (2013).

\bibitem {BCFV}  A. Borodin, I. Corwin, P. Ferrari, and B. Veto, Math. Phys.,
Anal. Geom. 18, 1 (2015).


\bibitem{Co} I. Corwin. The Kardar-Parisi-Zhang equation and universality class. Random Matrices: Theory Appl, 1(01):1130001, 2012.

\bibitem {CG20b} I. Corwin and P. Ghosal. KPZ equation tails for general initial data. Electron. J Probab, 25, 2020.

\bibitem {CG20a} I. Corwin and P. Ghosal. Lower tail of the KPZ equation. Duke Math J, 169 (7), 1329--1395, 2020.

\bibitem {CGK+18} I. Corwin, P. Ghosal, A. Krajenbrink, P. Le Doussal, and L.-C. Tsai. Coulomb-gas electrostatics controls large fluctuations of the
Kardar-Parisi-Zhang equation. Phys. Rev. Lett., 121(6), 060201, 2018.


\bibitem{CoS} I. Corwin and H. Shen. Some recent progress in singular stochastic PDEs. Bull Amer Math Soc, 57, 409--454, 2019.

\bibitem{DT19} S. Das and L.-C. Tsai. Fractional moments of the stochastic heat equation. arXiv:1910.09271, 2019.

\bibitem {Do} V. Dotsenko, Europhys. Lett. 90, 20003 (2010).

\bibitem{FS} P.~L.  Ferrari and H. Spohn.  Random growth models. arXiv:1003.0881, 2010.

\bibitem {FW} M.I. Freidlin and A.D. Wentzell, Random Perturbations of Dynamical Systems (Springer-Verlag, New York, 1998.

\bibitem{GL} P. Ghosal and Y. Lin. Lyapunov exponents of the SHE for general initial data. arXiv:2007.06505, 2020.

\bibitem {GIP} M. Gubinelli, P. Imkeller, and N. Perkowski. Paracontrolled distributions and singular PDEs. In Forum of Mathematics, Pi, volume 3.Cambridge University Press, 2015.

\bibitem{Ha} M. Hairer. A theory of regularity structures. Invent Math, 198(2), 269--504, 2014.

\bibitem {IS} T. Imamura and T. Sasamoto, Phys. Rev. Lett. 108, 190603. 

\bibitem {KMS16} A. Kamenev, B. Meerson, and P. V. Sasorov. Short-time height distribution in the one-dimensional Kardar-Parisi-Zhang equation:
Starting from a parabola. Phys. Rev. E, 94(3), 032108, 2016.

\bibitem{KPZ} M. Kardar, G. Parisi and Y.-C. Zhang, Dynamic scaling of growing interfaces, Phys. Rev. Letters. {56} (9), 889--892, 1986.

\bibitem {KK07}  I. Kolokolov and S. Korshunov. Optimal fluctuation approach to a directed polymer in a random medium. Phys. Rev. B, 75(14), 140201, 2007.

\bibitem{KK09}  I. Kolokolov and S. Korshunov. Explicit solution of the optimal fluctuation problem for an elastic string in a random medium. Phys. Rev. E, 80(3), 031107, 2009.


\bibitem {KLDP18} A. Krajenbrink, P. Le Doussal, and S. Prolhac. Systematic time expansion for the Kardar-Parisi-Zhang equation, linear statistics of
the GUE at the edge and trapped fermions. Nucl. Phys. B, 936, 239--305, 2018.

%\bibitem {LaLi} J. M. Lasry and P.-L. Lions,

\bibitem {LD19} P. Le Doussal. Large deviations for the KPZ equation from the KP equation. arXiv preprint arXiv:1910.03671, 2019.

\bibitem  {LDMS16} P. Le Doussal, S. N. Majumdar, and G. Schehr. Large deviations for the height in 1D Kardar-Parisi-Zhang growth at late times. EPL
(Europhysics Letters), 113(6), 60004, 2016.

\bibitem {LT} Y. Lin and  L-C. Tsai, A lower-tail limit in the weak noise theory. arXiv preprint arXiv:2210.05629. 

\bibitem {college de france} P.-L. Lions. Lectures at College de France.

\bibitem {MSR} Martin, Siggia and Rose. Rev. E 86, 041151 (2012).

\bibitem {MKV16} B. Meerson, E. Katzav, and A. Vilenkin. Large deviations of surface height in the Kardar-Parisi-Zhang equation. Phys. Rev. Lett., 116(7), 070601, 2016.

\bibitem{Mu} S. Munoz. Classical and weak solutions to local first-order mean field games through elliptic regularity,
Ann. Inst. H. Poincare Anal. Non Lineaire 39 (2022), no. 1, 1--39.


\bibitem {Poreta} A. Porretta.  Regularizing effects of the entropy functional in optimal transport and planning problems, J. Funct. Anal. 284 (2023), no. 3.


\bibitem{Q} J. Quastel. Introduction to KPZ. Current developments in mathematics, 2011(1), 2011.

\bibitem{QS} J. Quastel and H. Spohn. The one-dimensional KPZ equation and its universality class. J. Stat. Phys, 160(4), 965--984, 2015.

\bibitem {SS} T. Sasamoto and H. Spohn, Phys. Rev. Lett. 104, 230602 (2010).

\bibitem {SMP17} P. Sasorov, B. Meerson, and S. Prolhac. Large deviations of surface height in the 1+1-dimensional Kardar-Parisi-Zhang equation:
exact long-time results for $h < 0$. J. Stat. Mech. Theory Exp., 2017(6), 063203, 2017.


\bibitem {Tsa18} L.-C. Tsai. Exact lower tail large deviations of the KPZ equation. arXiv:1809.03410, 2018.










\end{thebibliography}
\end{document}